\def\block(#1,#2)#3{\multicolumn{#2}{c}{\multirow{#1}{*}{$ #3 $}}}
\newtheorem{theorem}{Theorem}[section]
\newtheorem{lemma}[theorem]{Lemma}
\theoremstyle{definition}
\newtheorem{remark}{Remark}
\def\bp3{{\beta_{33}}}
\newcommand{\mb}{\mathbf}
\newcommand{\mc}{\mathcal}
\def\mX{{\mathbb X}}
\def\Ltwo{{\mathbb L}^2 }
\title[Modeling and stabilization results for a piezoelectric  composite] 
      {Dynamic and electrostatic modeling for a piezoelectric smart composite and related stabilization results}
\author[Ahmet \"Ozkan \"Ozer]{}
\subjclass{Primary: 	35Q60, 35Q93, 93D15; Secondary: 	74F15, 93C20.}
 \keywords{piezoelectric smart composite, smart Rao-Nakra beam, smart Mead-Marcus beam, voltage control, electrostatic, magnetic effects, energy harvesting}
 \email{ozkan.ozer@wku.edu}
\thanks{The  author is supported by the Western Kentucky University startup research grant.}
\begin{document}
\maketitle

\centerline{\scshape Ahmet \"Ozkan \"Ozer$^*$}
\medskip
{\footnotesize
 \centerline{Department of Mathematics, Western Kentucky University}
   \centerline{ Bowling Green, KY 42101, USA}
} 

\medskip


\bigskip



\begin{abstract}
A cantilevered piezoelectric smart composite beam, consisting of perfectly bonded elastic, viscoelastic and piezoelectric layers,  is considered. The piezoelectric layer is actuated by a voltage source. Both fully dynamic and electrostatic approaches, based on Maxwell's equations, are used to model the piezoelectric layer.  We obtain (i) fully-dynamic and electrostatic Rao-Nakra type models by assuming that the viscoelastic layer has a negligible weight and stiffness, (ii) fully-dynamic and electrostatic Mead-Marcus type models  by neglecting the in-plane and rotational inertia terms. Each model is a  perturbation of the corresponding classical smart composite beam model.  These models are written  in the state-space form,   the existence and uniqueness of solutions are obtained in  appropriate Hilbert spaces.  Next, the stabilization problem for each closed-loop system, with a thorough analysis, is investigated for the natural $B^*-$type state feedback controllers. The fully dynamic  Rao-Nakra  model with four state feedback controllers is shown to be not asymptotically stable for certain choices of material parameters  whereas the electrostatic model is exponentially stable with only three state  feedback controllers (by the spectral multipliers method). Similarly, the fully dynamic Mead-Marcus model lacks of asymptotic stability for certain solutions whereas the electrostatic model  is exponentially stable by  only one state feedback controller.
\end{abstract}


\section{Introduction}
A piezoelectric smart composite beam consisting of a stiff elastic layer  \textcircled{1}, a complaint (viscoelastic) layer  \textcircled{2}, and a piezoelectric layer  \textcircled{3} is considered in this paper, see Fig. \ref{ACL}.
The piezoelectric layer  \textcircled{3} is also an elastic beam  with  electrodes at its top and bottom surfaces, insulated at the edges (to prevent fringing effects), and connected to an external electric circuit. (See Figure \ref{ACL}).  As the electrodes are subjected to a voltage source, the piezoelectric layer  compresses or extends, inducing a bending moment in the composite structure.
The electrostatic assumption (due to Maxwell's equations) is widely used to model  the single piezoelectric layer  which entirely ignores the dynamic effects for the Maxwells's magnetic equations, see i.e. \cite{Cao-Chen,Smith,Tiersten}. In fact, even though it is minor in comparison to the mechanical, the dynamic electromagnetic effects have a dramatic effect on the control of these materials \cite{Ozkan1,V-S3,V-S4}. Many control approaches are also available to control piezoelectric beams such as feedback, feedforward, sensorless, etc.  \cite{Choi,M-F}.

\begin{figure}[htp]
\centering
\includegraphics[width=4.5in]{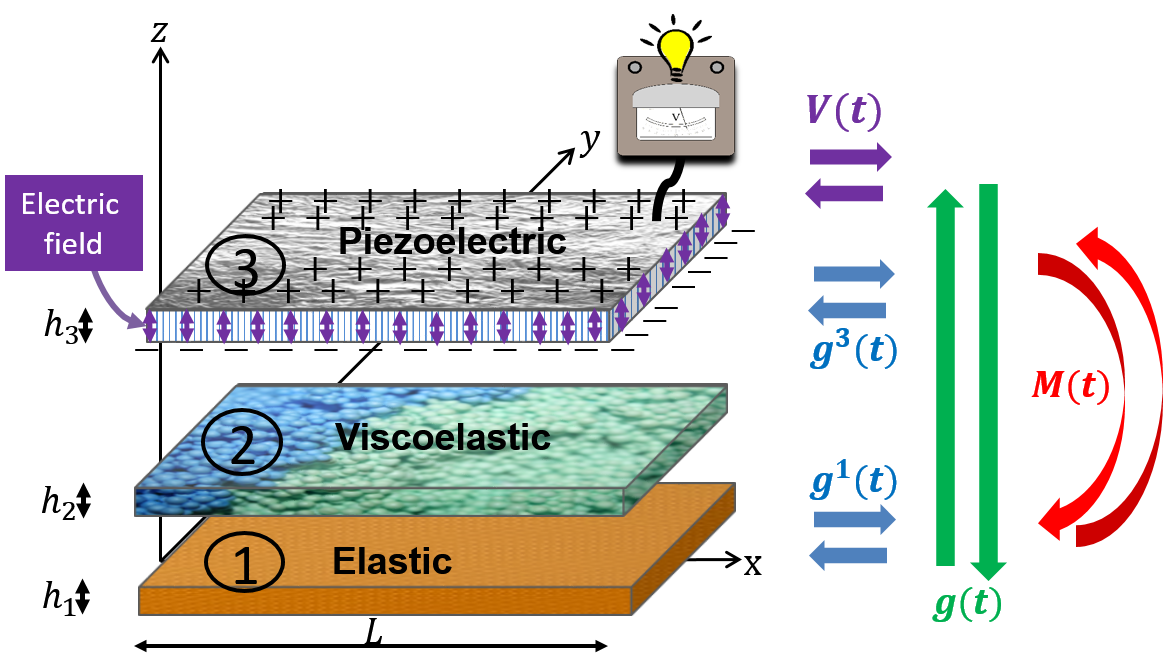}
\caption{A voltage-actuated piezoelectric smart composite of length $L$  with thicknesses $h_1, h_2, h_3$ for its layers { \textcircled{1},  \textcircled{2}, \textcircled{3},} respectively. The longitudinal motions of top and bottom layers  \textcircled{1} and  \textcircled{3} are controlled by $g^1(t),g^3(t), V(t),$ and the bending motions (of the whole composite) are controlled by $M(t), g(t).$ For the fully-dynamic models, written in the state-space formulation $\dot\varphi=\mc A \varphi + B u(t)$, the $B^*-$type observation for the piezoelectric layer naturally corresponds to the total induced current at its electrodes. It  is more physical in terms of practical applications. As well, measuring the total induced current at the electrodes of the piezoelectric layer is easier than measuring displacements or the velocity of the composite at one end of the beam, i.e. see \cite{Baz,Chee,Miller}.} \label{ACL}
 \end{figure}

The models of piezoelectric smart composites in the literature all assume the electrostatic assumption for its piezoelectric layer. These models differ by the assumptions for the viscoelastic layer and the geometry of the composite, i.e. see \cite{Trindade}.  The  models are either a Mead-Marcus (M-M) type or a Rao-Nakra (R-N) type  as  obtained  in \cite{Baz,Lam}, respectively. The M-M model only describes the bending motion, and the R-N model describes bending and longitudinal motions all together. These models reduce to the classical counterparts once the piezoelectric strain is taken to be zero \cite{Mead,Rao}. The active boundary feedback stabilization of the classical R-N model (having no piezoelectric layer) is only investigated  for hinged \cite{O-Hansen4} and clamped-free \cite{Wang} boundary conditions. The exact controllability of the M-M and R-N models are shown for the fully clamped, fully hinged, and clamped-hinged models \cite{O-Hansen1,O-Hansen3}. The exponential stability in the existence of the passive distributed ``shear" damping term is investigated for the  R-N and M-M models (\cite{A-H,W-G}) .

Asymptotic stabilization of piezoelectric smart composite models are investigated   in \cite{Baz,Lam} for various PID-type feedback controllers and a shear-type distributed damping.   The exponential stability of the electrostatic M-M and R-N models for clamped-free boundary conditions has been open problems for more than a decade.   Recently,   exponential stability of the electrostatic R-N model is shown  by using four type feedback controllers \cite{Ozkan3}, two for the longitudinal motions and two for the bending motions,  by using the compact perturbation argument \cite{Trigg} together with the use of spectral multipliers \cite{O-Hansen4}. Exponential stability with only three controllers is shown  by using a tedious spectral-theoretic approach  \cite{Y-W}. The fully dynamic R-N model  is also shown to be not asymptotically stable for many choices of material parameters by using the natural $B^*-$type feedback controllers  \cite{Ozkan3}. The charge-actuated electrostatic counterparts are shown to be exponentially stable whereas the current-actuated electrostatic counterparts can only be asymptotically stabilizable \cite{Ozer18b}.

In this paper, we develop a novel modeling strategy to model  a cantilevered piezoelectric smart composite with/without the magnetic effects due to the Maxwell's equations. We mainly use the methodology developed in \cite{Hansen3,Ozkan3}. We obtain two  models:
\begin{itemize}
  \item [(i)] A  fully dynamic Rao-Nakra-type model by assuming that the stiffness and the weight of the middle layer are negligible.
  \item [(ii)] A fully dynamic Mead-Marcus-type model by assuming that in-plane and rotational inertia terms are negligible.
\end{itemize}
By discarding the dynamic electromagnetic magnetic effects  (electro-magnetic kinetic energy) in these models,  electrostatic Rao-Nakra and Mead-Marcus models are simply derived.
The voltage control is part of the charge boundary conditions corresponding to the dynamic charge equation. The fully dynamic Mead-Marcus type model is novel in the literature. All models are shown to be well-posed and are written in the state-space formulation.

For each model, natural  $B^*-$type state feedback controllers are considered. We summarize our findings as the following:
\begin{itemize}
  \item [I.]The  fully dynamic closed-loop Rao-Nakra-type model is shown to be asymptotically stable for the inertial sliding solutions, see Theorem \ref{lack1}. It is simply because the outer layers of the composite are made of different materials, piezoelectric and elastic, and therefore,  the top and the bottom layers have different speeds of wave propagation. In fact, if the outer layers are identical piezoelectric beams as in \cite{Ozkan3}, asymptotic stability fails for inertial sliding solutions.
       \item [II.] In contrast to the result above, the fully dynamic Mead-Marcus-type model  is not asymptotically stable for inertial-sliding solutions if the material parameters satisfy a number-theoretical condition, see Theorem \ref{lack}.
  \item [III.] The electrostatic Rao-Nakra-type model  is recently shown to be exponentially stable with four controllers in \cite{Ozkan3}. This result is improved in Theorem \ref{RN-elec} by  eliminating one redundant controller. The proof uses higher order spectral multipliers  as in \cite{B-Rao}, see Lemma \ref{xyz}. In fact, the same result is recently announced in \cite{Y-W} by a different approach, which requires to determine the spectrum of the closed-loop system.

   \item  [IV.] The electrostatic Mead-Marcus-type model  is exponentially stable by only one feedback controller. Similarly, the proof mainly uses the spectral multipliers, see Theorem \ref{MM-mult}. Our result rigorously proves the findings of \cite{Baz} where only the asymptotic stability is claimed without a proof.
\end{itemize}

Note that modeling and well-posedness results for only the R-N model  are briefly announced in \cite{Ozkan5}.

\section{Modeling assumptions: Classical sandwich beam theory and electromagnetism}
The piezoelectric smart composite beam considered in this paper is consisting of a stiff layer, a complaint layer, and a piezoelectric layer, see Figure \ref{ACL}. The composite occupies the
region $\Omega=\Omega_{xy}\times (0, h):=[0,L]\times [-b,b] \times (0,h)$ at equilibrium.
The total thickness $h$ is assumed to be small in comparison to the dimensions of $\Omega_{xy}$.
 The layers are indexed from 1 to 3 from the stiff  layer to the piezoelectric layer, respectively.

Let $0=z_0<z_1<z_2<z_3=h, $ with
 $$h_i=z_i-z_{i-1}, \quad i=1,2,3.$$
We use the rectangular coordinates $X=(x,y)$ to denote points in $\Omega_{xy},$ and  $(X, z)$ to denote points in $\Omega = \Omega^{\rm s} \cup \Omega^{\rm ve} \cup \Omega^{\rm p} $, where $\Omega^{\rm s}, \Omega^{\rm ve},$ and $\Omega^{\rm p}$ are the reference configurations of the stiff, viscoelastic, and piezoelectric layer, respectively, and they are given by
\begin{eqnarray}
\nonumber &&\Omega^{\rm s}=\Omega_{xy}\times (z_0,z_1),~~ \Omega^{\rm ve}=\Omega_{xy}\times (z_1,z_2), ~~\Omega^{\rm p}=\Omega_{xy}\times (z_2,z_3).
\end{eqnarray}

For $(X,z)\in \Omega,$ let $U(X,z) = ({U_1,U_2,U_3})(X, z)$ denote the displacement vector of the point
(from reference configuration). For the  beam theory,  all displacements  are assumed to be independent of $y-$coordinate, and $U_2\equiv 0.$ The transverse displacements is  $w(x,y,z)= U_3(x)= w^i(x)$ for  any $i$ and $x\in [0,L].$ Define $u^i(x,y,z)=U_1(x,0,z_i)=u^i(x)$  for $i=0,1,2,3$ and  for all $x\in (0,L).$

 Define
$$\vec \psi=[\psi^1,\psi^2, \psi^3]^{\rm T}, \quad \vec \phi=[\phi^1,\phi^2,\phi^3]^{\rm T}, \quad \vec v=[v^1, v^2, v^3]^{\rm T}$$ where
\begin{eqnarray}\label{defs1} &&  \psi^i=\frac{u^i-u^{i-1}}{h_i}, \quad \phi^i= \psi^i + w_x,  ~~ v^i= \frac{u^{i-1}+u^i}{2}, \quad i = 1, 2, 3.
\end{eqnarray}
where $\psi^{i}$   is the total rotation angles
(with negative orientation) of the deformed filament within the $i^{\rm th}$ layer in the
$x-z$ plane, $\phi^i$ is the (small angle approximation for
the) shear angles within each layer, $v^i$ is the longitudinal displacement of the center line of the $i^{\rm th}$ layer.

 For the middle layer, we apply
Mindlin-Timoshenko small displacement assumptions, while for the outer layers Kirchhoff small displacement assumptions are applied. Therefore,
\begin{eqnarray}\label{defs3}\phi^1=\phi^3=0, \quad  \psi^1=\psi^3=-w_x, \quad \phi^2=\psi^2+ w_x.\end{eqnarray}

\begin{figure}[htp]
\centering
\includegraphics[width=4.5in]{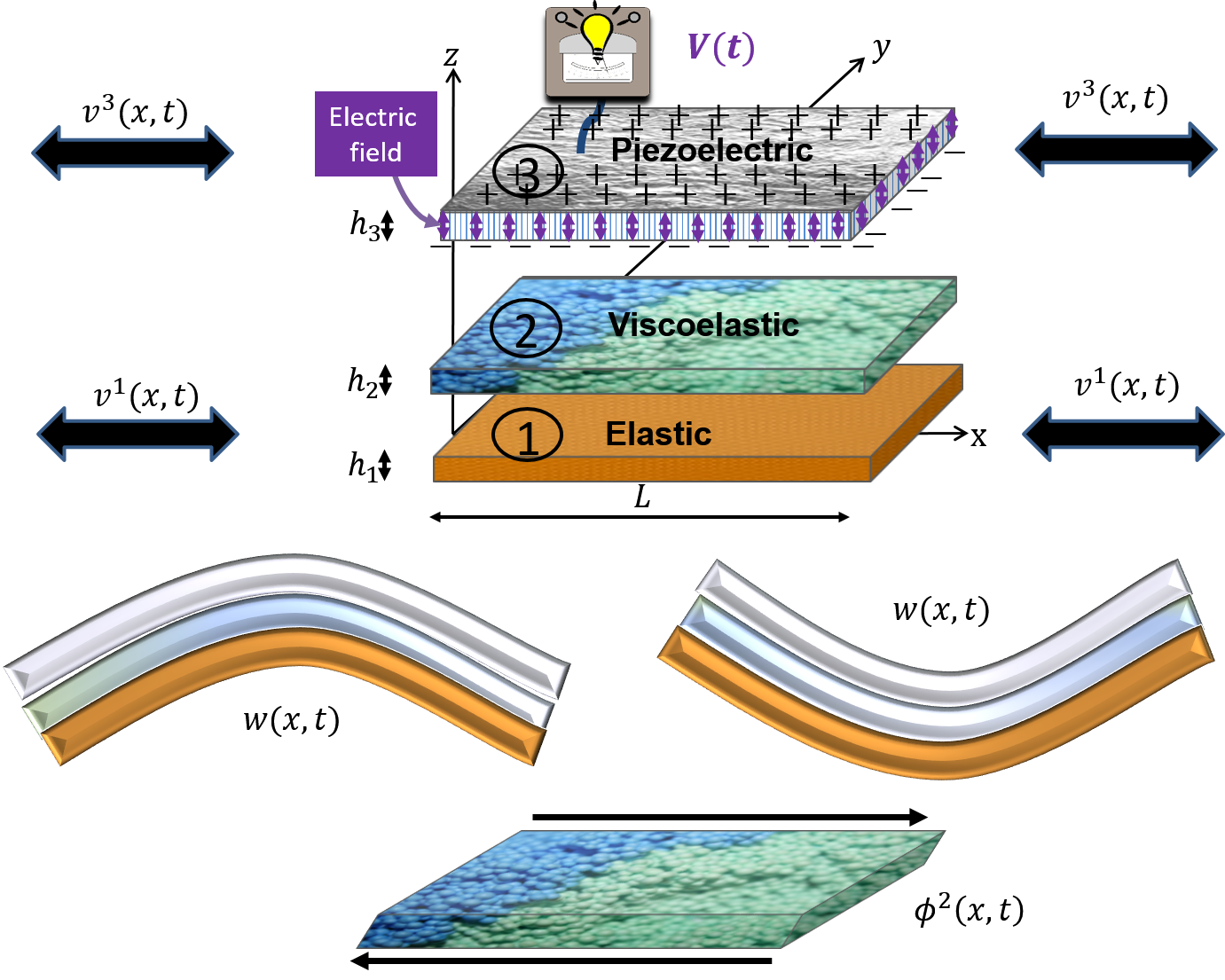}
\caption{ The equations of motion describing the overall ``small" vibrations on the composite beam are dictated by the variables $v^1(x,t),v^3(x,t),w(x,t),\phi^2(x,t)$ which correspond to the longitudinal vibrations of Layers  \textcircled{1} and \textcircled{3}, bending of the composite  \textcircled{1}-\textcircled{2}-\textcircled{3}, and shear of Layer \textcircled{2}.} \label{ACL2}
 \end{figure}
Let $G_2$ be the shear modulus of the viscoelastic layer. Defining ${\hat z}^i = \frac{z^{i-1}+z^i}{2},$ and
\begin{eqnarray}
\nonumber  && \alpha^1=c_{11}^1, ~~ \alpha^2=c_{11}^2,~~\alpha^3=\alpha_1^3 + \gamma^2 \beta, ~\alpha_1^3=c_{11}^3,\\
\label{coef}&&  \gamma=\gamma_{31},~\gamma_1=\gamma_{15},~\beta=\frac{1}{\varepsilon_{33}}, ~\beta_{1}=\frac{1}{\varepsilon_{11}},
\end{eqnarray}
 where  $c_{ij}^k$ are  elastic stiffness coefficients of each layer, and $\gamma_{ij}$ and $\varepsilon_{ij},$ are  piezoelectric and   permittivity coefficients for the piezoelectric layer. Refer to \cite{Ozer18c} for sample piezoelectric constants.
 The displacement field, strains,  and the constitutive relationships for each layer are given in Table \ref{table:displ-II}.
  \bgroup
\def\arraystretch{1.4}
\begin{table}[htp]
\centering
\begin{tabular}{|p{3.6cm}|p{8cm}p{0cm}|} \hline
Layers&   Displacements, Stresses, Strains, &  \\
&   Electric fields,  and Electric displacements&  \\ \hline
\multirow{3}{*}{Layer {\Large \textcircled{1}} - Elastic }&$U_1^1(x,z)=v^1(x)- (z-\hat z_1)w_x,~~U_3(x,z)=w(x)$ &  \\ \cline{2-3}
&$S_{11}=\frac{\partial v^1}{\partial x}+\frac{1}{2}(w_x)^2- (z-\hat z_1) \frac{\partial^2 w}{\partial x^2},~~S_{13}=0$ &  \\ \cline{2-3}
&$ T_{11}=\alpha_1S_{11},~~T_{13}=T_{12}=T_{23}=0$ &  \\ \cline{2-3}
\hline
\multirow{3}{*}{Layer {\Large \textcircled{2}} - Viscoelastic } &$U_1^2(x,z)=v^2(x)+ (z-\hat z_2)\psi^2(x),~~U_3(x,z)=w(x)$ &  \\ \cline{2-3}
&$ S_{11}=\frac{\partial v^2}{\partial x}- (z-\hat z_i) \frac{\partial \psi^2}{\partial x},~~S_{13}=\frac{1}{2}\phi^2$ &  \\ \cline{2-3}
&$ T_{11}=\alpha_1^2S_{11},~~T_{13}= 2G_{2} S_{13},$ ~~$T_{12}=T_{23}=0$ &  \\ \cline{2-3}
\hline
\multirow{4}{*}{Layer {\Large \textcircled{3}} - Piezoelectric }&$U_1^3(x,z)=v^3(x)- (z-\hat z_3)w_x,~~U_3(x,z)=w(x)$ &  \\ \cline{2-3}
&$ S_{11}=\frac{\partial v^3}{\partial x}+\frac{1}{2}(w_x)^2- (z-\hat z_3) \frac{\partial^2 w}{\partial x^2},~~S_{13}=0$ &  \\ \cline{2-3}
&$ T_{11}=\alpha^3 S_{11}-\gamma\beta D_3,~~T_{13}=T_{12}=T_{23}=0$ & \\ \cline{2-3}
& $E_1=\beta_{1}D_1,~~E_3=-\gamma\beta S_{11}+\beta D_3$  &  \\ \hline
\end{tabular}
\caption{ Linear  constitutive relationships for each layer. $U_i^j, T_{ij},$ $ S_{ij},$ $D_i,$ and $E_i$   denote displacements, the stress tensor, strain tensor, electrical displacement, and electric field for $i, j = 1, 2, 3.$}
\label{table:displ-II}
\end{table}
We follow the dynamic approach in \cite{O-M1,Ozkan3} to include the electromagnetic effects for the piezoelectric layer \textcircled{3} . The magnetic field $\textrm{B}$ is perpendicular to the $x-z$ plane, and therefore,  $\textrm{B}_2(x)$ is the only non-zero component. Assuming  $E_1=D_1=0,$   and by $\frac{ d \textrm{B}_2}{dx}=-\mu \dot D_3,$ we obtain the total induced current accumulated $[0,x]\in [0,L]$ portion of  the piezoelectric beam $\textrm{B}_2=-\mu\int_0^x \dot D_3(\xi, z, t) ~d\xi.$ This is physical since $B_2(0)=0$  at the clamped end of the beam and $B_2(L)=-\mu\int_0^L \dot D_3(\xi, z, t) ~d\xi$ at the free end of the beam. Analogously, we define $p=\int_0^x  D_3(\xi,  t) ~d\xi$ to be the total charge of the piezoelectric beam at $x\in (0,L]$ so that $p_x=D_3,$ and $p(0)=0.$

Assume that the beam is subject to a distribution of boundary forces $(\tilde g^1, \tilde g^3, \tilde g)$  along its edge $x=L,$ see Fig. \ref{ACL}.
Now define
\begin{eqnarray}
 \nonumber   \left.
\begin{array}{ll}
 g^i(x,t )=\int_{0}^{z_1} \tilde g^i(x,z,t)~dz,  ~g(x,t )=\int_{0}^{h} \tilde g(x,z,t)~dz,& \\
 ~~m_i=\int_{z_{i-1}}^{z_i} (z-\hat z_i) \tilde g^i(x,z,t)~dz, \quad  i=1,3
 \end{array} \right.
\end{eqnarray}
to be the external force resultants defined as in \cite{Lagnese-Lions}. For our model, it is appropriate to assume that  $\tilde g^1, \tilde g^3$ are independent of $z,$ Let $V(t)$ be the voltage applied at the electrodes of the piezoelectric layer. By using Table \ref{table:displ-II} (with the assumption $D_1=0$), the  Lagrangian for the ACL beam is
 \begin{eqnarray} \nonumber \mb{L}= \int_0^T \left[\mb{K}-(\mb{P}+\mb{E})+\mb B +\mb{W}\right]~dt\end{eqnarray}
 where
  \begin{eqnarray}
\label{energies}  \begin{array}{ll}
 \mb{K}= \frac{1}{2} \int_0^L \left[\rho_1h_1(\dot v^1)^2+\rho_2h_2(\dot v^2)^2+\rho_3h_3(\dot v^3)^2 + \rho_2h_2 (\dot \psi^2)^2\right. &\\
   \quad \left.+ \left(\rho_1h_1 + \rho_3h_3\right) \dot w_x^2+ \left(\rho_1h_1 + \rho_2h_2+\rho_3h_3\right) \dot w^2\right]~dx,&\\
 \mb P+\mb E = \frac{1}{2} \int_0^L \left[\alpha^3 h_3  \left( (v_x^3)^2 +\frac{h_3^2}{12}w_{xx}^2\right) -2\gamma\beta h_3  v^3_x p_x + \beta h_3 p_x^2 \right.&\\
  \quad + \alpha^2 h_2 \left((v^2_x)^2 + \frac{h_2^2}{12}\psi_{x}^2  + G_2h_2 \left(\phi^2\right)^2+ \alpha^1 h_1 \left( (v_x^1)^2 +\frac{h_1^2}{12}w_{xx}^2\right)\right]~dx,\quad\quad&\\
  \mb{B}= \frac{\mu h}{2}\int_{0}^L \dot p^2~dx,&\\
 \mb{W} =\int_0^L \left[- p_x V(t) \right]~dx+  g^1 v^1 (L)  + g^3 v^3 (L) +  g w(L)-Mw_x(L).
 \end{array}
\end{eqnarray}
Here, $M=m_1+m_3,$ $p=\int_0^{x}  D_3(\xi,  t) ~d\xi$ is the total electric charge at point $x,$ $\rho_i$ is the volume density of the $i^{\rm th}$ layer,  and  $\mb K,$ $\mb P+\mb E,$ $\mb B,$ and $\mb W$ are the kinetic energy, the total stored energy, and the magnetic energy of the beam, and the work done by the external mechanical and electrical forces \cite{Ozkan3}.

\subsection{Hamilton's Principle}

By using (\ref{defs1})-(\ref{defs3}), the variables $v^2, \phi^2$  and $\psi^2$ can be written as the functions of state variables as the following
\begin{eqnarray}
\nonumber v^2&=&\frac{1}{2}\left(v^1+v^3\right) + \frac{h_3-h_1}{4}w_x, ~~ \psi^2 =\frac{1}{h_2}\left(-v^1+v^3\right) + \frac{h_1 +  h_3}{2h_2}w_x\\
\nonumber \phi^2 &=&\frac{1}{h_2}\left(-v^1+v^3\right) + \frac{h_1 + 2h_2+ h_3}{2h_2}w_x.
\end{eqnarray}
 Thus, we choose $w,v^1,v^3$ as the state variables.  Let $H=\frac{h_1 + 2h_2+h_3}{2}.$ Application of Hamilton's principle, by using forced boundary conditions, i.e. clamped at $x=0,$ by setting the variation of admissible displacements $\{v^1,v^3, p, w\}$ of ${\mb L}$ to zero yields the following coupled equations of stretching in odd layers, dynamic charge in the piezoelectric layer, the bending of the whole composite:
\begin{eqnarray}
 \label{perturbed}
 \left\{ \begin{array}{ll}
  \left(\rho_1h_1 + \frac{\rho_2h_2}{3}\right)\ddot v^1 + \frac{\rho_2 h_2}{6}\ddot v^3- \frac{1}{6}\alpha^2 h_2 v^3_{xx}  -\left(\alpha^1 h_1 + \frac{1}{3}\alpha^2 h_2\right) v^1_{xx}  & \\
  \quad -\frac{\rho_2 h_2}{12}(2h_1-h_3)\ddot w_x + \frac{\alpha^2 h_2}{12}(2h_1-h_3) w_{xxx}- G_2  \phi^2 = f^1,&\\
   \left(\rho_3h_3 + \frac{\rho_2h_2}{3}\right)\ddot v^3 + \frac{\rho_2 h_2}{6}\ddot v^1- \frac{1}{6}\alpha^2 h_2 v^1_{xx} -\left(\alpha^3 h_3 + \frac{1}{3}\alpha^2 h_2\right) v^3_{xx}&\\
  \quad +\frac{\rho_2 h_2}{12}(2h_3-h_1)\ddot w_x   - \frac{\alpha^2 h_2}{12}(2h_3-h_1) w_{xxx}+ G_2 \phi^2  + \gamma \beta h_3 p_{xx} = f^3,   & \\
        \mu  h_3 \ddot p   -\beta h_3   p_{xx} + \gamma \beta h_3 v^3_{xx}= 0, &\\

   (\rho_1 h_1 + \rho_2 h_2+ \rho_3 h_3) \ddot w    - \frac{1}{12}\left(\rho_1 h_1^3 + \rho_3 h_3^3 + \rho_2h_2(h_1^2 + h_3^2 -h_1h_3)\right) \ddot{w}_{xx} &\\
     \quad + \frac{1}{12}\left(\alpha^1 h_1^3+\alpha^3 h_3^3 + \alpha^2 h_2(h_1^2+h_3^2 -h_1h_3) \right)w_{xxxx}  +\frac{\rho_2 h_2}{12} (2h_1-h_3) \ddot v^1_{x}&\\
   \quad -\frac{\alpha^2 h_2}{12}(2h_1-h_3) v^1_{xxx}  -\frac{\rho_2 h_2}{12}(2h_3-h_1) \ddot v^3_{x} +\frac{\alpha^2 h_2}{12} (2h_3-h_1) v^3_{xxx}&\\
\quad  -  H G_2   \phi^2_x=f.&\\
  \end{array}\right.
\end{eqnarray}
with  associated boundary and initial conditions
\begin{eqnarray}
 \label{ivp}
 \left\{ \begin{array}{ll}
  \left.v^1, v^3, w, w_x, p~~\right|_{x=0}=0,& \\
   \left.\frac{1}{6} \alpha^2 h_2 v^3_x + \left(\alpha^1 h_1 + \frac{1}{3} \alpha^2 h_2\right) v^1_x  \right. &\\
  \left. \quad +\frac{\rho_2h_2}{12}(2h_1-h_3) \ddot w~-\frac{\alpha^2 h_2}{12} (2h_1 - h_3)w_{xx}\right|_{x=L}=g^1(t),&\\
  \left. \frac{1}{6} \alpha^2 h_2 v^1_x + \left(\alpha^3 h_3 + \frac{1}{3} \alpha^2 h_2\right) v^3_x -\frac{\rho_2h_2}{12}(2h_3-h_1) \ddot w\right. &\\
     \left. \quad +\frac{\alpha^2 h_2}{12} (2h_3 - h_1)w_{xx}-\gamma\beta h_3 p_x\right|_{x=L}=g^3(t),&\\
   \left.\beta h_3 p_x -\gamma \beta h_3 v^3_x =-V(t)\right|_{x=L},&\\
  \left.\frac{\alpha^1 (h_1)^3 + \alpha_3 (h_3)^3 + \alpha_2 h_2 ((h_1)^2 + (h_3)^2 -h_1 h_3)}{12} w_{xx}-\frac{\alpha_2 h_2}{12} (2h_1-h_3) v^1_{x}\right. &\\

   \left.\quad +\frac{\alpha_2 h_2}{12} (2h_3-h_1) v^3_{x}  \right|_{x=L} =M(t),&\\
   \frac{\rho_1 (h_1)^3 + \rho_3 (h_3)^3 + \rho_2 h_2 ((h_1)^2 + (h_3)^2 -h_1 h_3)}{12} \ddot w  -\frac{\rho_2 h_2}{12} (2h_1-h_3) \ddot v^1& \\
   \quad +\frac{\alpha_2 h_2}{12} (2h_1-h_3) v^1_{xx}  -\frac{\alpha^1 (h_1)^3 + \alpha_3 (h_3)^3 + \alpha_2 h_2 ((h_1)^2 + (h_3)^2 -h_1 h_3)}{12} w_{xxx}&\\
\left.  \quad +\frac{\rho_2 h_2}{12} (2h_3-h_1) \ddot v^1 -\frac{\alpha_2 h_2}{12} (2h_3-h_1) v^3_{xx} +G_2 H \phi^2 \right|_{x=L} =g(t),&\\
 (v^1, v^3, p, w, \dot v^1, \dot v^3, \dot p, \dot w)(x,0)=(v^1_0,  v^3_0, w_0, p_0,  v^1_1, v^3_1,  p_1, w_1).
  \end{array}\right.
\end{eqnarray}
Note that the equations of motion (\ref{perturbed})-(\ref{ivp}) does not have any distributed damping term. It is simply because the aim of the paper is to investigate the stabilizability of the closed-loop system with only the $B^*-$type stabilizing boundary controllers. Presumably, adding a (viscous) distributed damping term in the form of a shear damping or Kelvin-Voight damping would automatically make the structure asymptotically stable.  For practical applications, it is more relevant to consider a shear-type of damping due to the viscoelastic middle layer by replacing the term $G_2\phi^2$ by $G_2 \phi^2 + \tilde G_2\dot \phi^2$ in (\ref{perturbed}) -(\ref{ivp}) where $\tilde G_2$ is the damping coefficient \cite{Hansen3}.

\section{Fully-dynamic Rao-Nakra (R-N) model}
\label{modeling}
The model obtained above is highly coupled, and it is not very easy to analyze the controllability properties. For this reason, we assume the thin-compliant-layer Rao-Nakra sandwich beam assumptions that the viscoelastic layer is thin and its stiffness negligible. Therefore we work with the perturbed model for $\rho_2, \alpha^2\to 0;$ as in  \cite{Hansen3}. This approximation retains the potential energy of shear and transverse kinetic energy so that the model above reduces to
\begin{eqnarray}
 \label{dbas} \left\{
  \begin{array}{ll}
 \rho_1h_1\ddot v^1-\alpha^1 h_1 v^1_{xx} - G_2 \phi^2 = f^1,   & \\
 \rho_3h_3\ddot v^3-\alpha^3 h_3v^3_{xx} + \gamma \beta h_3 p_{xx} + G_2 \phi^2 = f^3,   & \\
 \mu  h_3 \ddot p   -\beta h_3   p_{xx} + \gamma \beta h_3 v^3_{xx}= 0, &\\
  m \ddot w - K_1 \ddot{w}_{xx} + K_2 w_{xxxx} - G_2 H \phi^2_x=f,&\\
 \phi^2=\frac{1}{h_2}\left(-v^1+v^3 + H w_x\right)&  \end{array} \right.
\end{eqnarray}
with the boundary and initial conditions
\begin{eqnarray}
 \label{d-son} \left\{
  \begin{array}{ll}
 v^1(0)=v^3(0)= p(0)=w(0)=w_x(0)=0,~~ \alpha^1 h_1 v^1_x(L)=g^1(t), &\\
 \alpha^3 h_3 v^3_{x}(L)-\gamma \beta h_3 p_x(L)=g^3(t),~~ \beta h_3 p_x(L) -\gamma \beta h_3v^3_x(L)= -V(t),\\
  K_2 w_{xx}(L) = -M(t), K_1 \ddot w_x(L) -K_2 w_{xxx}(L) + G_2 H \phi^2(L)=g(t),&\\
(v^1, v^3, p, w, \dot v^1, \dot v^3, \dot p, \dot w)(x,0)=(v^1_0,  v^3_0, p_0, w_0, v^1_1, v^3_1,  p_1, w_1)&
 \end{array} \right.&&
\end{eqnarray}
where $m=\rho_1h_1+ \rho_2h_2 + \rho_3 h_3,$ $K_1=\frac{\rho_1 h_1^3}{12} +\frac{\rho_3 h_3^3}{12},$ and $K_2=\frac{\alpha^1 h_1^3}{12}+\frac{\alpha^3 h_3^3}{12}.$
\vspace{0.1in}

\noindent {\bf{\large Semigroup well-posedness:}} \label{Sec-III}
Define
\begin{eqnarray}
 \nonumber
  \begin{array}{ll}
 H^1_L(0,L)=\{\psi\in H^1(0,L): \psi(0)=0\}, &\\
   H^2_L(0,L)=\{\psi\in H^2(0,L): \psi(0)=\psi_x(0)=0\},   &
   \end{array}
\end{eqnarray}
and  the complex linear spaces
\begin{eqnarray}
 \nonumber
  \begin{array}{ll}
 \mX=\Ltwo(0,L),~~  \mathrm V=\left(H^1_L(0,L)\right)^3 \times H^2_L(0,L),   \mathrm H= \mX^3 \times H^1_L(0,L),~~ \mc{H} = \mathrm V \times \mathrm H.  &
   \end{array}
\end{eqnarray}
The  energy associated with (\ref{dbas})-(\ref{d-son})  is
\begin{eqnarray}
\nonumber
  \begin{array}{ll}
 \mathrm{E}(t) =\frac{1}{2}\int_0^L \left\{\rho_1 h_1  |\dot v^1|^2 + \rho_3 h_3  |\dot v^3|^2 + \mu h_3  |\dot p|^2+ m |\dot w|^2 + \alpha^1 h_1 |v^1_x|^2 + \alpha^3 h_3 |v^3_x|^2\right.  & \\
 \left.  + K_1 |\dot w_x|^2 + K_2 |w_{xx}|^2 -\gamma\beta h_3 v_x \bar p_x-\gamma\beta h_3  p_x \bar v^3_x + \beta h_3 |p_x|^2 +G_2 h_2 |\phi^2|^2 \right\}~ dx.~~&\\
   \end{array}
\end{eqnarray}
This motivates the  definition of the inner product on $\mc H:$
\begin{eqnarray}
\label{oz} \begin{array}{ll}
 \left<\left[ \begin{array}{c}
 u_1 \\
 \vdots\\
 u_8
 \end{array} \right], \left[ \begin{array}{c}
 v_1 \\
 \vdots\\
 v_8
 \end{array} \right]\right>_{\mc H}= \left<\left[ \begin{array}{l}
 u_5\\
 u_6\\
 u_7\\
 u_8
 \end{array} \right], \left[ \begin{array}{l}
 v_5\\
 v_6\\
 v_7\\
 v_8
 \end{array} \right]\right>_{\mathrm H}  + \left<\left[ \begin{array}{l}
 u_1 \\
 u_2 \\
 u_3\\
 u_4
 \end{array} \right], \left[ \begin{array}{l}
 v_1 \\
 v_2\\
 v_3\\
 v_4
 \end{array} \right]\right>_{\mathrm V}&\\
 =\int_0^L \left\{\rho_1 h_1  u_5 { {\bar v}}_5 + \rho_3 h_3  u_6 { {\bar v}}_6+ \mu h_3  u_7 { {\bar v}_7} + m  u_8{ {\bar v}_8} + K_1 (u_8)_x(\bar v_8)_x  \right.& \\
 \quad  + \alpha^1 h_1  (u_1)_{x} (\bar v_1)_x + K_2 (u_4)_{xx} (\bar v_4)_{xx} &\\
  \quad + \frac{G_2}{h_2}  (-u_1+u_2 + H(u_4)_x)(-\bar v_1+\bar v_2 + H(\bar v_4)_x) &\\
 \left.\quad + h_3 \left< \left[ {\begin{array}{*{20}c}
   \alpha^3 + \gamma^2\beta  & -\gamma\beta\\
     -\gamma \beta & \beta  \\
\end{array}} \right] \left[ \begin{array}{l}
 u_{2x} \\
 u_{3x}
 \end{array} \right], \left[ \begin{array}{l}
  v_{2x} \\
  v_{3x}  \end{array} \right]\right>_{\mathbb{C}^2} \right\}~dx
  \end{array}
 \end{eqnarray}
  where $\left<\cdot,\cdot\right>_{\mathbb{C}^2}$ is the inner product on $\mathbb{C}^2.$  Obviously, $\langle \, , \, \rangle_{\mc H} $ does indeed define an inner product, with the induced energy norm,  since  $ \left| {\begin{array}{*{20}c}
   \alpha^3 + \gamma^2\beta  & -\gamma\beta\\
     -\gamma \beta & \beta  \\
\end{array}} \right|>0.$ As well, $\|-u_1+u_2 + H(u_4)_x\|_{L^2(0,L)}$ does not violate the coercivity of  (\ref{oz}), see (Lemma 2.1, \cite{Hansen3}) for the details.

Consider that all external body forces are zero, i.e. $f^1,f^3,f\equiv 0.$  Writing $\varphi=[v^1, v^3, p, w, \dot v^1,  \dot v^3, \dot p, \dot w]^{\rm T}$ and ${\bf F}(t)=\left(g^1(t), g^3(t), V(t), M(t), g(t) \right),$ the control  system (\ref{dbas})-(\ref{d-son})   can be put into the  state-space form
\begin{eqnarray}\label{Semigroup}\left\{
\begin{array}{ll}
\dot \varphi +  \underbrace{ \left[ {\begin{array}{*{20}c}
   0_{4\times 4}  & I_{4\times 4} \\
       M^{-1}A  &  0_{4\times 4}  \\
\end{array}} \right]}_{\mc A}  \varphi =\underbrace{ \left[ \begin{array}{c} 0_{4\times 5} \\ B_0 \end{array} \right]}_B {\bf F}(t),\quad \varphi(x,0) =  \varphi ^0.
\end{array}\right.
\end{eqnarray}
where $\mc A: {\text{Dom}}(\mc A) \subset \mc H \to \mc H$ with ${\rm {Dom}}(\mc A) =  \{(\vec z, \vec {\tilde z}) \in V\times V, A\vec z \in \mathrm V' \},$  and the operators $A:V\to V',$ $\mc M : H^1_L (0,L) \to (H^1_L(0,L))',$ $B \in \mathcal{L}(\mathbb{C}^5 , ({\rm Dom}(\mc A))'), B_0 \in \mathcal{L}(\mathbb{C}^5, \mathrm V')$ are
\begin{eqnarray}
\nonumber &&\left<A \psi, \tilde\psi\right>_{V',V}= \left<\psi, \tilde \psi\right>_{V},\quad M=\left[\rho_1 h_1 I ~~\rho_3 h_3 I~~\mu h_3 I~~ \mc M\right],\\
\nonumber&& \begin{array}{ll}\left< \mc M \psi, \tilde \psi \right>_{(H^1(0,L))', H^1_L(0,L))}= \int_0^L \left(m \psi \tilde \psi + K_1 \psi_x \tilde \psi_x\right) dx,\end{array}\\
 \nonumber &&\begin{array}{ll}
B_0 =   \left(
                                                                                      \begin{array}{ccccc}
                                                                                        \delta_L & 0 & 0 & 0 & 0 \\
                                                                                        0 & \delta_L & 0 & 0 & 0 \\
                                                                                        0 & 0 & -\delta_L & 0 & 0 \\
                                                                                        0 & 0 & 0 & (\delta_L)_x & \delta_L \\
                                                                                      \end{array}
                                                                                    \right).  &
   \end{array}
\end{eqnarray}
 In the above, $\delta_L=\delta(x-L) $ is the Dirac-Delta distribution, and $\mathrm V'$ and $({\rm Dom}(\mc A))'$  are dual spaces of $\mathrm V$ and ${\rm Dom}(\mc A)$ pivoted with respect $\mathrm H,$ respectively.
We have the following well-posedness and perturbation  theorem:

\begin{theorem}\label{w-pf}
Let $T>0,$ and ${\bf F}\in (\Ltwo(0,T))^5.$ For any $\varphi^0 \in \mc H,$ $\varphi\in C[[0,T]; \mc H],$ and there exists a positive constants $c_1(T)$
such that (\ref{Semigroup}) satisfies
      \begin{eqnarray} \nonumber
      \begin{array}{ll}
      \|\varphi (T) \|^2_{\mc H} \le c_1 (T)\left(\|\varphi^0\|^2_{\mc H} + \|{\bf F}\|^2_{(\Ltwo(0,T))^5}\right).
      \end{array}
      \end{eqnarray}
\end{theorem}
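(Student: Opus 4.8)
The plan is to reduce the stated continuous-dependence inequality to two structural facts about the abstract system (\ref{Semigroup}): that $-\mathcal A$ generates a strongly continuous unitary group $S(t)$ on $\mathcal H$, and that the (unbounded) control operator $B$ is admissible for $S(t)$. Granting these, the mild solution $\varphi(t)=S(t)\varphi^0+\int_0^t S(t-s)B\mathbf F(s)\,ds$ is well defined and lies in $C([0,T];\mathcal H)$; since $\|S(T)\varphi^0\|_{\mathcal H}=\|\varphi^0\|_{\mathcal H}$ while admissibility yields $\|\int_0^T S(T-s)B\mathbf F(s)\,ds\|_{\mathcal H}\le C_T\|\mathbf F\|_{(\Ltwo(0,T))^5}$, the claim follows from the triangle inequality and $(a+b)^2\le 2a^2+2b^2$, with $c_1(T)=2\max\{1,C_T^2\}$.

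For generation I would first verify that $\mathcal A$ is skew-adjoint with respect to the energy inner product (\ref{oz}). Density and closedness of ${\rm Dom}(\mathcal A)$ are routine. Skew-symmetry, $\Re\langle \mathcal A\varphi,\varphi\rangle_{\mathcal H}=0$, follows by integrating by parts in each of the four equations of (\ref{dbas}) and checking that all boundary contributions at $x=L$ cancel against the energy terms while those at $x=0$ vanish by the clamped conditions (\ref{d-son}); here the symmetric positive-definite piezoelectric coupling matrix is exactly what makes the $p$–$v^3$ cross terms cancel. Maximality is obtained by solving the resolvent equation $(\lambda I+\mathcal A)\varphi=F$ for a single $\lambda$, which reduces to a coercive variational problem on $\mathrm V$; coercivity of the associated form, including the shear contribution $\|-u_1+u_2+H(u_4)_x\|_{L^2(0,L)}$, is guaranteed by the positivity of the coupling matrix together with the Poincaré-type estimate recorded in (Lemma 2.1, \cite{Hansen3}). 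Lax–Milgram then gives surjectivity of $\lambda I+\mathcal A$, and Stone's theorem yields the unitary group, so the energy $\mathrm E(t)$ is conserved for the homogeneous problem.

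The crux is admissibility, which I expect to be the main obstacle. I would first identify $B^*$ explicitly: pairing $B$ against a state and evaluating the Dirac terms shows that $B^*\varphi$ is, up to the obvious weights and signs, the vector of boundary velocity traces at $x=L$, namely $(\dot v^1,\dot v^3,-\dot p,-\dot w_x,\dot w)\big|_{x=L}$, precisely the $B^*$-type observation advertised in the introduction. By duality, admissibility of $B$ is equivalent to admissibility of $B^*$ as an observation operator for the adjoint group, i.e.\ to the hidden-regularity estimate $\int_0^T\|B^*S(t)\varphi^0\|_{\mathbb C^5}^2\,dt\le C_T\|\varphi^0\|_{\mathcal H}^2$ for $\varphi^0\in{\rm Dom}(\mathcal A)$. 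I would establish this by the multiplier method in the spirit of \cite{Lagnese-Lions,Hansen3}: multiply the homogeneous longitudinal equations by $x\,v^1_x$ and $x\,v^3_x$, the charge equation by $x\,p_x$, and the bending equation by $x\,w_x$, integrate over $(0,L)\times(0,T)$, and assemble the resulting identities so that the traces at $x=L$ are bounded by the conserved interior energy. The delicate part is controlling the coupling terms — the shear coupling $G_2\phi^2$ linking all four unknowns and the piezoelectric coupling $\gamma\beta$ linking $p$ and $v^3$ — and reconciling the second-order dynamics of $v^1,v^3,p$ with the fourth-order Rayleigh-type bending of $w$, where the rotational-inertia term $K_1\ddot w_{xx}$ forces the extra trace $\dot w_x(L)$ into the estimate; these cross terms must be absorbed into the energy using the positivity of the coupling matrix and Young's inequality.

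Having secured generation and admissibility, I assemble the mild-solution bound as in the first paragraph to obtain simultaneously $\varphi\in C([0,T];\mathcal H)$ and the stated inequality. Consistent with the \emph{perturbation} label of the theorem, I would finally observe that this scheme exhibits (\ref{dbas})–(\ref{d-son}) as a perturbation of the classical Rao–Nakra sandwich beam of \cite{Hansen3}: the Maxwell charge equation merely adds one conservative wave component, and the piezoelectric coupling only modifies the still positive-definite energy inner product (\ref{oz}), so neither the skew-adjointness nor the multiplier estimate is structurally destroyed and the new terms are handled within the same variational and multiplier framework rather than requiring a new functional setting.
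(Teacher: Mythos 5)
Your proposal is correct and takes essentially the same route as the paper, which defers this proof to \cite{Ozkan5} and argues the analogous Mead--Marcus well-posedness result exactly as you do: skew-adjointness of $\mc A$ in the energy inner product giving a unitary group (Stone/L\"umer--Phillips), admissibility of the unbounded control operator $B$, and the mild-solution estimate $\|\varphi(T)\|^2_{\mc H}\le c_1(T)\bigl(\|\varphi^0\|^2_{\mc H}+\|{\bf F}\|^2_{(\Ltwo(0,T))^5}\bigr)$. Your identification of $B^*\varphi$ as the weighted boundary traces $(\dot v^1,\dot v^3,-\dot p,-\dot w_x,\dot w)|_{x=L}$ and the duality/multiplier argument for the hidden-regularity estimate are precisely the standard ingredients of that argument.
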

\vspace{0.1in}
\begin{proof} The proof is provided in \cite{Ozkan5}.
\end{proof}

\begin{theorem} For fixed initial data and no applied forces, the solution $(v^1,v^3,p,w)\in \mc H$ of (\ref{perturbed})-(\ref{ivp}) converges to  the solution of $(v^1,v^3,p,w)\in \mc H$ in (\ref{dbas})-(\ref{d-son})  as $\|(\rho_2,\alpha^2)\|_{\mathbb{R}^2}\to 0.$
\end{theorem}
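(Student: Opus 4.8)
\emph{Proof proposal.} The plan is to read this as a singular-coefficient limit and to establish it through the Trotter--Kato approximation theorem, whose hypotheses I will check by a stationary (resolvent) argument. Write $\ep=(\rho_2,\alpha^2)$ for the perturbation parameter and observe first that, because $v^2,\psi^2,\phi^2$ are affine functions of the state $(v^1,v^3,w_x)$ (the formulas preceding (\ref{perturbed})), both systems are posed on the \emph{same} underlying space $\mc H=\mathrm V\times\mathrm H$; only the energy inner product depends on $\ep$. The $\ep$-energy equals the R-N energy (\ref{oz}) plus the extra quadratic terms of (\ref{energies}) carrying prefactors $\rho_2$ or $\alpha^2$, and these terms are continuous quadratic forms of $(v^1,v^3,w)$ and their first derivatives. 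Consequently $c(\ep)^{-1}\|\cdot\|_{\mc H}\le\|\cdot\|_{\ep}\le c(\ep)\|\cdot\|_{\mc H}$ with $c(\ep)\to1$ as $\|\ep\|\to0$, and $\|\cdot\|_{\ep}\to\|\cdot\|_{\mc H}$ pointwise. Writing (\ref{perturbed})--(\ref{ivp}) with $\mathbf F\equiv0$ in the state-space form $\dot\varphi=\mathcal A_{\ep}\varphi$ as in (\ref{Semigroup}), both $\mathcal A_{\ep}$ and the R-N generator $\mathcal A$ of (\ref{dbas})--(\ref{d-son}) are skew-adjoint for their own inner products, hence generate unitary $C_0$-groups which, by the norm equivalence above, are \emph{uniformly} bounded on the fixed space $(\mc H,\|\cdot\|_{\mc H})$.

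By Trotter--Kato it then suffices to prove strong resolvent convergence $R(\lambda,\mathcal A_{\ep})g\to R(\lambda,\mathcal A)g$ in $\mc H$ for each $g$ and one fixed $\lambda$ with $\Re\lambda>0$ (skew-adjointness puts all such $\lambda$ in the common resolvent set, with $\|R(\lambda,\mathcal A_{\ep})\|\le c(\ep)^2/\Re\lambda$, uniformly bounded). Eliminating the velocity block (write $g=(g_1,g_2)$), the equation $(\lambda-\mathcal A_{\ep})\varphi_{\ep}=g$ reduces to the stationary variational identity $b_{\ep}(z_{\ep},\zeta)=\langle M_{\ep}(g_2+\lambda g_1),\zeta\rangle$ for the displacement block $z_{\ep}\in\mathrm V$, with $b_{\ep}(z,\zeta)=\lambda^2\langle M_{\ep}z,\zeta\rangle+a_{\ep}(z,\zeta)$ and $a_{\ep}$ the potential-energy form. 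Here coercivity is in fact free and \emph{uniform}: since the dropped terms are sign-definite, $a_{\ep}=a_0+(\text{nonnegative }\ep\text{-terms})\ge a_0\ge\kappa\|\cdot\|_{\mathrm V}^2$ with $\kappa$ the R-N constant of Lemma 2.1 in \cite{Hansen3}, independent of $\ep$; and the uniform a priori bound $\|z_{\ep}\|_{\mathrm V}\le C\|g\|_{\mc H}$ already follows from the uniform resolvent bound. Thus, along a subsequence, $z_{\ep}\rightharpoonup z_*$ weakly in $\mathrm V$.

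Passing to the limit is linear and benign: in $b_{\ep}(z_{\ep},\zeta)$ every contribution carrying a factor $\rho_2$ or $\alpha^2$ --- the in-plane and rotational inertia of layer \textcircled{2}, the membrane stiffness $\alpha^2 h_2$, and the $w_{xxx}$- and $\ddot w$-type cross couplings (including the corresponding boundary terms) --- is a vanishing coefficient times a factor bounded in the energy norm, and hence drops out, while the retained shear, bending and transverse-inertia terms converge by the weak convergence of $z_{\ep}$ and $(z_{\ep})_x$ together with $M_{\ep}\to M_0$. Therefore $z_*$ satisfies $b_0(z_*,\zeta)=\langle M_0(g_2+\lambda g_1),\zeta\rangle$, i.e. $(\lambda-\mathcal A)\varphi_*=g$; uniqueness for the R-N resolvent (Theorem \ref{w-pf}) forces $\varphi_*=R(\lambda,\mathcal A)g$ and hence convergence of the whole family. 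Weak convergence is upgraded to strong convergence in $\mathrm V$ by testing with $\zeta=z_{\ep}-z_*$ and invoking the uniform coercivity of $a_{\ep}$ together with the convergence of the forms. Trotter--Kato then yields $e^{t\mathcal A_{\ep}}\varphi^0\to e^{t\mathcal A}\varphi^0$ in $\mc H$, uniformly for $t$ in compact intervals, which is precisely the asserted convergence.

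The step I expect to be the genuine obstacle is \emph{not} the coercivity (which the positivity of the dropped energy terms renders automatic and uniform) but the reconciliation of the two generators' differing, $\ep$-dependent natural boundary conditions: comparing (\ref{ivp}) with (\ref{d-son}), the fully dynamic model carries extra $\ddot w$ and $\rho_2,\alpha^2$ terms in its Neumann-type boundary data, so $\mathrm{Dom}(\mathcal A_{\ep})\ne\mathrm{Dom}(\mathcal A)$. Making the resolvent limit rigorous requires tracking that these boundary contributions sit \emph{inside} the sesquilinear forms $b_{\ep}$ with vanishing prefactors and that the weak solution $z_{\ep}$ genuinely realizes the correct $\ep$-dependent natural boundary conditions, so that the variational limit indeed recovers the R-N boundary conditions. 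A fully equivalent route that sidesteps Trotter--Kato is the direct energy method: energy conservation gives the uniform bound $\|\varphi_{\ep}(t)\|_{\ep}=\|\varphi^0\|_{\ep}$, one extracts a weak-$*$ limit in $L^\infty(0,T;\mc H)$, identifies it with the R-N solution by the linear weak formulation and uniqueness, and upgrades to strong convergence in $C([0,T];\mc H)$ by combining weak convergence with the convergence of the conserved energies $\|\varphi^0\|_{\ep}\to\|\varphi^0\|_{\mc H}$.
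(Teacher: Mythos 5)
Your proposal is correct and is essentially the paper's own route: the paper gives no details here, deferring entirely to the regular-perturbation argument of \cite{Hansen3}, and that argument is precisely the semigroup-convergence (Trotter--Kato plus strong resolvent convergence) scheme you carry out, with uniform stability coming from the uniformly equivalent $\varepsilon$-dependent energy norms and the limit identified through the variational form and uniqueness. Your write-up in fact supplies more detail than the paper or its citation, including the correct observation that the $\varepsilon$-dependent natural boundary conditions are absorbed into the sesquilinear forms, so the differing operator domains cause no difficulty.
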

\begin{proof} The system (\ref{dbas})-(\ref{d-son}) is the perturbation of the system (\ref{perturbed})-(\ref{ivp}). The proof is analogous to the one in  \cite{Hansen3}.
\end{proof}

\subsection{Electrostatic Rao-Nakra (R-N) model }
\label{Nomag-RN}
Note that, if we exclude the magnetic effects by $\mu \to 0$ in (\ref{dbas})-(\ref{d-son}), or $\mb B\equiv 0$ in (\ref{energies}),  the $p-$equation in (\ref{dbas})-(\ref{d-son}) can be solved for $p.$ Then, the simplified equations of motion are as the following
\begin{eqnarray}
 \label{d4-non} &&\left\{
  \begin{array}{ll}
   \rho_1h_1\ddot v^1-\alpha^1h_1 v^1_{xx} - G_2  \phi^2 = 0   & \\
    \rho_3h_3\ddot v^3-\alpha_{1}^3 h_3 v^3_{xx} + G_2  \phi^2 = 0   &\\
   m \ddot w - K_1 \ddot{w}_{xx} +K_2  w_{xxxx} - {G_2 H}  \phi^2_x= 0&\\
    \phi^2=\frac{1}{h_2}\left(-v^1+v^3 + H w_x\right)&
  \end{array} \right.
\end{eqnarray}
with the boundary and initial conditions
\begin{eqnarray}
\label{divp-non} \left\{
  \begin{array}{ll}
 v^1(0)=v^3(0)= w(0)=w_x(0)=0, & \\
\alpha_1 h_1 v^1_x(L)= g^1(t), \quad \alpha_1^3 h_3 v^3_{x}(L)= -\gamma V(t),& \\
  K_2 w_{xx}(L) = -M(t),\quad
  K_1 \ddot w_x(L) -K_2 w_{xxx}(L) + G_2 H \phi^2(L)=g(t), &\\
  (v^1, v^3, w, \dot v^1, \dot v^3, \dot w)(x,0)=(v^1_0,  v^3_0, w_0,  v^1_1, v^3_1,  w_1).
  \end{array} \right.
\end{eqnarray}
where we use (\ref{coef}). We also free $g^3(t)\equiv 0,$ which is the mechanical strain controller at the tip $x=L,$ since the voltage control $V(t)$ of the piezoelectric layer is able to control the strains by itself.

\noindent \textbf{Semigroup Formulation:} Define the complex linear space $\mc{H} = \mathrm V \times \mathrm H$ where
$ V=\left(H^1_L(0,L)\right)^2 \times H^2_L(0,L),~~ \mathrm H= \mX^2 \times H^1_L(0,L).$ The  natural energy associated with (\ref{d4-non})-(\ref{divp-non})  is
\begin{eqnarray}
\nonumber
  \begin{array}{ll}
\mathrm{E}(t) =\frac{1}{2}\int_0^L \left\{\rho_1 h_1  |\dot v^1|^2 + \rho_3 h_3  |\dot v^3|^2 + m |\dot w|^2  + \alpha^1 h_1 |v^1_x|^2  + \alpha^3_1 h_3 |v^3_x|^2+ K_1 |\dot w_x|^2\right. &\\
   \left.\quad\quad \quad + K_2 |w_{xx}|^2 +G_2 h_2 |\phi^2|^2 \right\}~ dx.
  \end{array}
\end{eqnarray}
 This motivates definition of the inner product on $\mc H$
 \begin{eqnarray}
  \nonumber \begin{array}{ll}
\left<\left[ \begin{array}{c}
 u_1 \\
\vdots\\
 u_6
 \end{array} \right], \left[ \begin{array}{c}
 v_1 \\
 \vdots\\
 v_6
 \end{array} \right]\right>_{\mc H}= \left<\left[ \begin{array}{l}
 u_4\\
 u_5\\
 u_6
 \end{array} \right], \left[ \begin{array}{l}
 v_4\\
 v_5\\
 v_6
 \end{array} \right]\right>_{\mathrm H}
   + \left<\left[ \begin{array}{l}
 u_1 \\
 u_2 \\
 u_3
 \end{array} \right], \left[ \begin{array}{l}
 v_1 \\
 v_2\\
 v_3
 \end{array} \right]\right>_{\mathrm V}&\\
 =\int_0^L \left\{\rho_1 h_1  u_5 {\dot {\bar v}}_5 + \rho_3 h_3  u_6 {\dot {\bar v}}_6 + \mu h_3  \dot u_7 {\dot {\bar v}_7} + m \dot u_8{\dot {\bar v}_8}+ K_1 (u_8)_x(\bar v_8)_x \right. &\\
  \quad\quad + \alpha^1 h_1  (u_1)_{x} (\bar v_1)_x + K_2 (u_4)_{xx} (\bar v_4)_{xx} + \alpha^1_3 h_3  (u_3)_{x} (\bar v_3)_x&\\
 \left. \quad\quad +\frac{G_2}{h_2}  (-u_1+u_2 + H(u_4)_x)(-\bar v_1+\bar v_2 + H(\bar v_4)_x)\right\}~dx.
  \end{array}
\end{eqnarray}
 Writing $\varphi=[v^1, v^3, w, \dot v^1,  \dot v^3,  \dot w]^{\rm T}$ and ${\bf F}(t)=\left(g^1(t), V(t), M(t), g(t) \right),$ the control  system (\ref{d4-non})-(\ref{divp-non}) can be put into the  state-space form
\begin{eqnarray}
\label{Semigroup-non}\left\{
\begin{array}{ll}
\dot \varphi + \underbrace{\left[ {\begin{array}{*{20}c}
   0_{3\times 3}  & I_{3\times 3} \\
       M^{-1}A  &  0_{3\times 3}  \\
\end{array}} \right]}_{\mc A}  \varphi =\underbrace{ \left[ \begin{array}{c} 0_{3\times 4} \\ B_0 \end{array} \right]}_B {\bf F}(t) , \quad \varphi(x,0) =  \varphi ^0.
\end{array}\right.
\end{eqnarray}
where ${\rm {Dom}}(\mc A) =  \{(\vec z, \vec {\tilde z}) \in V\times V, A\vec z \in \mathrm V' \},$ $\mc A: {\text{Dom}}(A)\times V\subset \mc H \to \mc H,$ $M=\left[\rho_1 h_1 I ~~\rho_3 h_3 I~~ \mc M\right],$ and the operators $B \in \mathcal{L}(\mathbb{C}^4 , ({\rm Dom}(\mc A))'), B_0 \in \mathcal{L}(\mathbb{C}^4, \mathrm V')$ are
\begin{eqnarray}
\nonumber &&\left<A \psi, \tilde\psi\right>_{V',V}= \left<\psi, \tilde \psi\right>_{V},\quad\begin{array}{ll}
B_0 =  \left[
                                                                                      \begin{array}{ccccc}
                                                                                        \delta_L & 0 & 0 & 0  \\
                                                                                        0 & \delta_L & 0 & 0  \\
                                                                                        0 & 0 & (\delta_L)_x & \delta_L \\
                                                                                      \end{array}
                                                                                    \right].  &
   \end{array}
\end{eqnarray}
\begin{theorem}\label{w-pf-non}
Let $T>0,$ and ${\bf F}\in (\Ltwo(0,T))^4\in \Ltwo(0,T).$ For any $\varphi^0 \in \mc H,$ $\varphi\in C[[0,T]; \mc H]$ and  there exists a positive constants $c_1(T)$
such that (\ref{Semigroup-non}) satisfies
      \begin{eqnarray}\nonumber
      \begin{array}{ll}
      \|\varphi (T) \|^2_{\mc H} \le c_1 (T)\left\{\|\varphi^0\|^2_{\mc H}+ \|{\bf F}\|^2_{(\Ltwo(0,T))^4}\right\}.
      \end{array}
      \end{eqnarray}
\end{theorem}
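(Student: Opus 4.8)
The plan is to read (\ref{Semigroup-non}) as an abstract boundary-control system $\dot\varphi = -\mc A\varphi + B\,{\bf F}(t)$ and to derive the asserted estimate from two ingredients: generation of a $C_0$-semigroup by $-\mc A$, and admissibility of the unbounded control operator $B$. First I would show that $-\mc A$ generates a $C_0$-group of isometries on $\mc H$. Since the inner product defining $\mc H$ is exactly the energy form, a direct integration by parts shows that for $\varphi\in{\rm Dom}(\mc A)$ the interior terms of $\langle \mc A\varphi,\varphi\rangle_{\mc H}$ cancel while the boundary contributions at $x=L$ vanish for homogeneous data; hence $\Re\langle\mc A\varphi,\varphi\rangle_{\mc H}=0$ and $\mc A$ is skew-symmetric. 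Maximality (e.g.\ $0\in\rho(\mc A)$ via Lax--Milgram applied to the $V$-inner product $\langle A\cdot,\cdot\rangle_{V}$, coercive as recorded after (\ref{oz})) then gives, by Stone's theorem, a unitary group $(\mathbb{T}_t)$ with $\|\mathbb{T}_t\|_{\mc L(\mc H)}=1$.

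Next, writing the mild solution by variation of parameters, $\varphi(T)=\mathbb{T}_T\varphi^0+\int_0^T \mathbb{T}_{T-s}B\,{\bf F}(s)\,ds$, the homogeneous part contributes exactly $\|\varphi^0\|_{\mc H}$. The estimate thus reduces to bounding the input map $\Phi_T{\bf F}:=\int_0^T \mathbb{T}_{T-s}B\,{\bf F}(s)\,ds$ from $(\Ltwo(0,T))^4$ into $\mc H$, i.e.\ to showing that $B$ is an admissible control operator. By duality this is equivalent to the observability-type bound $\int_0^T\|B^*\mathbb{T}_t^{*}z\|^2_{\mathbb{C}^4}\,dt\le c_1(T)\|z\|^2_{\mc H}$ for $z\in{\rm Dom}(\mc A^*)$. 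Because $B_0$ is built from $\delta_L$ and $(\delta_L)_x$, the operator $B^*$ evaluates the velocity components of the state at the tip $x=L$, returning $(\dot v^1,\dot v^3,\dot w_x,\dot w)(L,t)$; the required bound is therefore a hidden-regularity estimate for the boundary traces of the homogeneous system (\ref{d4-non}).

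To establish that trace regularity I would use the multiplier method. Testing the first two equations in (\ref{d4-non}) against multipliers of the form $x\,v^i_x$ and the Euler--Bernoulli equation against $x\,w_{xxx}$, integrating over $(0,L)\times(0,T)$ and integrating by parts, transfers the interior energy into boundary terms at $x=L$, the clamped conditions at $x=0$ annihilating the left endpoint; the shear coupling $G_2\phi^2$ and the remaining lower-order terms are absorbed using the conserved energy, yielding the desired $\Ltwo(0,T)$ trace bounds by $\|z\|_{\mc H}$. The main obstacle is the bending component: since $w\in H^2_L$ and the observation involves the derivative trace $\dot w_x(L)$, this trace is not controlled by the energy through Sobolev embedding alone, and the mixed space--time boundary term $K_1\ddot w_x(L)$ appearing in (\ref{divp-non}) must be handled carefully when choosing the beam multiplier so that no uncontrolled $w_{xxx}(L)$ term survives. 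Once this trace estimate is secured, admissibility and hence the stated inequality follow; the argument runs parallel to the fully-dynamic case of \cite{Ozkan5}, the present system being its $\mu\to0$ reduction with one fewer evolution equation.
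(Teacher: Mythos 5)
Your overall strategy---skew-adjointness of the generator plus Stone's theorem for the free dynamics, then admissibility of the unbounded control operator $B$ obtained by duality from the trace bound $\int_0^T\|B^*\mathbb{T}_t^*z\|^2_{\mathbb{C}^4}\,dt\le c_1(T)\|z\|^2_{\mc H}$---is exactly the framework the paper intends: its own ``proof'' of Theorem \ref{w-pf-non} is only a citation to \cite{Ozkan5}, but the analogous Theorem \ref{w-pff} for the M-M model is proved precisely by L\"umer--Phillips generation plus the assertion that $B$ is admissible. Your identification of $B^*$ as the collocated traces $(\dot v^1,\dot v^3,\dot w_x,\dot w)(L,t)$, the use of the first-order multipliers $x\,v^i_x$ for the two wave components, and the observation that $\dot w_x(L)$ is the one trace not reachable by Sobolev embedding (since the $w$-velocity only lies in $H^1_L$) are all correct.

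The genuine gap is your multiplier for the bending equation. The multiplier $x\,w_{xxx}$ cannot produce an admissibility estimate in $\mc H$: carrying it out against $m\ddot w-K_1\ddot w_{xx}+K_2w_{xxxx}-G_2H\phi^2_x=0$ generates the interior term $\tfrac{K_2}{2}\int_0^T\int_0^L x\,\partial_x|w_{xxx}|^2\,dx\,dt$, the time-boundary term $\int_0^L \dot w\,x\,w_{xxx}\,dx$, and the trace $|w_{xxx}(L)|^2$, all of which live one derivative above the energy space, so the identity cannot be closed by $\|z\|^2_{\mc H}$ (a bound by $\|z\|_{{\rm Dom}(\mc A)}$ is useless for admissibility). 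That third-order multiplier is the right tool in Lemma \ref{xyz}, where it is applied to smooth eigenfunctions of an overdetermined problem and only triviality is needed, not a quantitative energy-space estimate. Moreover, the difficulty you correctly flag---the $K_1\ddot w_x(L)$ term in the free-end condition of (\ref{divp-non})---is exactly the point you leave unresolved (``must be handled carefully \ldots once this trace estimate is secured''). The repair is to use the first-order multiplier $x\,w_x$, as in the classical Rayleigh-beam literature (cf.\ \cite{B-Rao}): the term $-K_1\int_0^T\int_0^L\ddot w_{xx}\,x\,w_x\,dx\,dt$ then yields, after integrations by parts, precisely $-\tfrac{K_1L}{2}\int_0^T|\dot w_x(L)|^2dt$ plus energy-bounded quantities, while the boundary contributions at $x=L$ combine into $-L\int_0^T\bigl[K_1\ddot w_x(L)-K_2w_{xxx}(L)\bigr]w_x(L)\,dt$, which the homogeneous free-end condition collapses to $LG_2H\int_0^T\phi^2(L)\,w_x(L)\,dt$; this is bounded by $T\,\mathrm{E}$ because $\phi^2(L)$ and $w_x(L)$ are pointwise-in-time energy traces. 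With that substitution (and $w_{xx}(L)=0$ for the adjoint flow), your argument closes and gives the stated inequality.
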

\begin{proof} The proof can be found in \cite{Ozkan5}.
\end{proof}

\begin{theorem} For fixed initial data and no applied forces, the solution $(v^1,v^3,w)\in \mc H$ of (\ref{perturbed})-(\ref{ivp}) converges to  the solution of $(v^1,v^3,w)\in \mc H$ in (\ref{d4-non})-(\ref{divp-non}) as \\$\|(\rho_2,\alpha^2,\mu)\|_{\mathbb{R}^3}\to 0.$
\end{theorem}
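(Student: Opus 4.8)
The plan is to treat the stated convergence as a two-fold perturbation. The limit $(\rho_2,\alpha^2)\to 0$ is a \emph{regular} perturbation that reduces the fully coupled model (\ref{perturbed})--(\ref{ivp}) to the fully dynamic Rao--Nakra model (\ref{dbas})--(\ref{d-son}), handled exactly as in the theorem following Theorem~\ref{w-pf} by the method of \cite{Hansen3}. The genuinely new ingredient is the \emph{singular} limit $\mu\to 0$, which degenerates the hyperbolic charge equation $\mu h_3\ddot p-\beta h_3 p_{xx}+\gamma\beta h_3 v^3_{xx}=0$ into the elliptic constraint $p_{xx}=\gamma v^3_{xx}$ and thereby removes the two phase-space components $(p,\dot p)$. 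Since the three parameters vanish simultaneously, I would carry out joint energy estimates rather than iterate the two limits. Writing $\ep=(\rho_2,\alpha^2,\mu)$ and letting $(v^1_\ep,v^3_\ep,p_\ep,w_\ep)$ solve (\ref{perturbed})--(\ref{ivp}) for each parameter value, the goal is to show that the projection onto $(v^1_\ep,v^3_\ep,w_\ep)$ converges in $\mc H$ to the solution of (\ref{d4-non})--(\ref{divp-non}).

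First I would establish uniform-in-$\ep$ energy bounds. With all controls set to zero the energy (\ref{energies}) is conserved, so $\mathrm{E}_\ep(t)=\mathrm{E}_\ep(0)$ for every parameter value. The crucial algebraic observation, using $\alpha^3=\alpha_1^3+\gamma^2\beta$ from (\ref{coef}), is that the elastic and electric contributions combine as
\[
\alpha^3 h_3|v^3_x|^2+\beta h_3|p_x|^2-2\gamma\beta h_3\,\Re\bigl(v^3_x\,\overline{p_x}\bigr)
=\alpha_1^3 h_3|v^3_x|^2+\beta h_3\,\bigl|p_x-\gamma v^3_x\bigr|^2 .
\]
Hence, for initial data chosen compatibly (fixing $(v^1_0,v^3_0,w_0,v^1_1,v^3_1,w_1)$ and letting the charge data satisfy $\|\sqrt{\mu}\,p_1\|_{\mX}\to 0$ and $\|(p_0)_x-\gamma (v^3_0)_x\|_{\mX}\to 0$), the conserved energy furnishes bounds, uniform in $\ep$, on $(v^1_\ep,v^3_\ep,w_\ep)$ together with $\|\sqrt{\mu}\,\dot p_\ep\|_{\mX}$ and $\|(p_\ep)_x-\gamma (v^3_\ep)_x\|_{\mX}$.

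Next I would extract weak-$*$ limits along a subsequence and pass to the limit. The delicate point is the charge equation: testing it against a smooth function and integrating by parts, the singular inertial term is controlled by $\|\sqrt\mu\,\dot p_\ep\|_{\mX}$ and vanishes as $\mu\to 0$, while the boundary term $\beta h_3(p_\ep)_x(L)-\gamma\beta h_3(v^3_\ep)_x(L)$ equals $-V\equiv 0$; in the limit one recovers $p_x=\gamma v^3_x$ with $p(0)=0$. Substituting this constraint into the $v^3_\ep$-equation turns the coupling term $\gamma\beta h_3(p_\ep)_{xx}$ into $\gamma^2\beta h_3 v^3_{xx}$, which combines with $-\alpha^3 h_3 v^3_{xx}$ to give exactly $-\alpha_1^3 h_3 v^3_{xx}$, the electrostatic stiffness of (\ref{d4-non}); the $(\rho_2,\alpha^2)$-terms drop as in the regular perturbation, so the limit triple solves (\ref{d4-non})--(\ref{divp-non}). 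Because that model is well posed with a unique solution (Theorem~\ref{w-pf-non}), the limit is subsequence-independent and the entire family converges; to upgrade weak to strong convergence in $\mc H$ I would show $\mathrm{E}_\ep(0)\to\mathrm{E}_0(0)$ and combine energy convergence with lower semicontinuity of the norm.

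The main obstacle is precisely the singular character of the $\mu\to 0$ limit: the phase space drops from eight to six components, so convergence must be formulated on the common variables $(v^1,v^3,w)$ only, and the constraint $p_x=\gamma v^3_x$ has to be recovered in a topology weak enough to tolerate the loss of the $\mu|\dot p|^2$ control yet strong enough to justify inserting it back into the mechanical equations and their inertial boundary conditions at $x=L$. I expect the cleanest route is a Trotter--Kato approximation adapted to the $\ep$-dependent inner products, with identification operators embedding the electrostatic phase space into the fully dynamic one; alternatively, the direct energy method sketched above, which mirrors the perturbation argument of \cite{Hansen3}, suffices once the combined-energy identity is in hand.
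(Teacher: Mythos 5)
There is no proof of this statement in the paper to compare yours against: unlike the preceding convergence theorem for the fully dynamic model, whose entire proof is the remark that (\ref{dbas})--(\ref{d-son}) is a perturbation of (\ref{perturbed})--(\ref{ivp}) together with a citation of \cite{Hansen3}, this theorem is stated with no proof at all, and the only supporting material is the formal derivation preceding (\ref{d4-non}) (set $\mu=0$, solve the $p$-equation, substitute). Your proposal is, in substance, a correct way to make that formal step rigorous, and it goes genuinely beyond what the paper's implicit route can deliver: the regular-perturbation argument of \cite{Hansen3} addresses the limit $(\rho_2,\alpha^2)\to 0$, where the phase space is unchanged and the generator's coefficients converge, but it says nothing about $\mu\to 0$, which deletes the components $(p,\dot p)$ and degenerates the hyperbolic charge equation into the constraint $p_x=\gamma v^3_x$. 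The ingredients you supply are exactly the ones this singular limit needs: the completed square $\alpha^3 h_3|v^3_x|^2+\beta h_3 |p_x|^2-2\gamma\beta h_3\Re (v^3_x\bar p_x)=\alpha_1^3 h_3 |v^3_x|^2+\beta h_3|p_x-\gamma v^3_x|^2$, which is the same identity that turns $\alpha^3$ into $\alpha^3_1$ in (\ref{d4-non}) and which makes the conserved energy (\ref{energies}) coercive on $(v^1,v^3,w)$ uniformly in all three parameters; the weak recovery of the constraint from the bound on $\|\sqrt{\mu}\,\dot p_\ep\|_{L^2(0,L)}$ together with the natural boundary condition at $x=L$; uniqueness for the limit system (Theorem \ref{w-pf-non}) to dispense with subsequences; and the energy-identity argument upgrading weak to strong convergence.

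One point deserves emphasis because it is forced on you by the statement itself: the theorem's ``fixed initial data'' cannot be taken literally, since the fully dynamic model carries initial data $(p_0,p_1)$ that the electrostatic model does not. Your reading --- fix the mechanical data and take the charge data well prepared, $\mu\|p_1\|^2_{L^2(0,L)}\to 0$ and $\|(p_0)_x-\gamma (v^3_0)_x\|_{L^2(0,L)}\to 0$ --- is the correct and essentially necessary one: for ill-prepared charge data the constraint-violating modes oscillate at temporal frequencies of order $\mu^{-1/2}$ while retaining order-one energy, and strong convergence of $v^3_\ep$ in $\mc H$ would then require an additional averaging or non-resonance argument that neither you nor the paper provides. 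With that reading, your sketch is a sound, self-contained proof strategy; what remains is routine (weak-$*$ compactness and passage to the limit in the variational formulation), and it supplies an argument the paper itself omits.
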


\section{ Fully dynamic Mead-Marcus (M-M) model}
Another way to simplify (\ref{perturbed})-(\ref{ivp}) is to assume the Mead-Marcus type sandwich beam assumptions that the longitudinal and rotational inertia terms are negligible in (\ref{perturbed})-(\ref{ivp}). This type of perturbation is singular, and therefore,   solutions do not behave continuously with respect to this perturbation. However, it is noted in \cite{Hansen3} that this type of approximation provides a close approximation to
the original  Rao-Nakra model in the low-frequency range. In this section,  we follow the  methodology in \cite{F}. By $\ddot v^1, \ddot v^3, \ddot w_{xx}\to 0$ in (\ref{perturbed})-(\ref{ivp}), we obtain
\begin{eqnarray}
 \label{perturbeddd} \left\{
  \begin{array}{ll}
   -\frac{1}{6}\alpha^2 h_2 v^3_{xx}-\left(\alpha^1 h_1 + \frac{1}{3}\alpha^2 h_2\right) v^1_{xx}     + \frac{\alpha^2 h_2}{12}(2h_1-h_3) w_{xxx}- G_2  \phi^2 = f^1, &\\
   -\frac{1}{6}\alpha^2 h_2 v^1_{xx}-\left(\alpha^3 h_3 + \frac{1}{3}\alpha^2 h_2\right) v^3_{xx}  - \frac{\alpha^2 h_2}{12}(2h_3-h_1) w_{xxx}&\\
   \quad\quad\quad+ G_2 \phi^2  + \gamma \beta h_3 p_{xx}  = f^3,   & \\
        \mu  h_3 \ddot p   -\beta h_3   p_{xx} + \gamma \beta h_3 v^3_{xx}= -V(t) \delta_L, &\\
      m \ddot w -\frac{\alpha^2 h_2}{12}(2h_1-h_3) v^1_{xxx}+\frac{\alpha^2 h_2}{12} (2h_3-h_1) v^3_{xxx}   &\\
 \quad + \frac{1}{12}\left(\alpha^1 h_1^3+\alpha^3 h_3^3 + \alpha^2 h_2(h_1^2+h_3^2 -h_1h_3) \right)w_{xxxx} -  H G_2   \phi^2_x=f,&\\
      \phi^2=\frac{1}{h_2}\left(-v^1+v^3 + H w_x\right),&
 \end{array} \right.
\end{eqnarray}
with the boundary and initial conditions
\begin{eqnarray}
 \label{ivpd} \left\{
  \begin{array}{ll}
 \left.v^1, v^3, w, w_x, p~~\right|_{x=0}=0, \\
   \left.\frac{1}{6} \alpha^2 h_2 v^3_x + \left(\alpha^1 h_1 + \frac{1}{3} \alpha^2 h_2\right) v^1_x -\frac{\alpha^2 h_2}{12} (2h_1 - h_3)w_{xx}\right|_{x=L}=g^1(t), \\
  \frac{1}{6} \alpha^2 h_2 v^1_x + \left(\alpha^3 h_3 + \frac{1}{3} \alpha^2 h_2\right) v^3_x  +\frac{\alpha^2 h_2}{12} (2h_3 - h_1)w_{xx} &\\
  \left. \quad\quad\quad\quad- \gamma \beta h_3 p_{x}\right|_{x=L}=g^3(t), \\
\left.\beta h_3 p_x(L) -\gamma \beta h_3v^3_x(L)\right|_{x=L}= 0, \\
  \left.\frac{\alpha^1 (h_1)^3 + \alpha_3 (h_3)^3 + \alpha_2 h_2 ((h_1)^2 + (h_3)^2 -h_1 h_3)}{12} w_{xx}\right.  \\
 \left. \quad  -\frac{\alpha_2 h_2}{12} (2h_1-h_3) v^1_{x}+\frac{\alpha_2 h_2}{12} (2h_3-h_1) v^3_{x} \right|_{x=L} =M(t),\\
  \left.\quad -\frac{\alpha^1 (h_1)^3 + \alpha_3 (h_3)^3 + \alpha_2 h_2 ((h_1)^2 + (h_3)^2 -h_1 h_3)}{12} w_{xxx}\right.  \\
 \left.  \quad+\frac{\alpha_2 h_2}{12} (2h_1-h_3) v^1_{xx} -\frac{\alpha_2 h_2}{12} (2h_3-h_1) v^3_{xx} +G_2 H \phi^2 \right|_{x=L} =g(t),\\
(v^1, v^3,  p, w, \dot v^1, \dot v^3, \dot p, \dot w)(x,0)=(v^1_0,  v^3_0, p_0, w_0,  v^1_1, v^3_1, p_1,  w_1).
 \end{array} \right. &&
\end{eqnarray}
For the rest of the section, we free all mechanical controllers $g^1(t), g^3(t), M(t), g(t)\equiv 0$ and just keep voltage controller $V(t).$
Let $\varsigma=\frac{G_2}{h_2},$ and define the positive constants by
\begin{eqnarray}
\label{constants} \begin{array}{ll}
 A= \frac{1}{12}\left(\alpha^1h_1^3 + \alpha^3 h_3^3+ \frac{\alpha^2 h_2^2\left(3\alpha^2 h_2(\alpha^1h_1^3 + \alpha^3 h_3^3)+12\alpha^1\alpha^3h_1 h_3 (h_1^2+h_3^2-h_1h_3)\right)}{\alpha^2 (h_2)^2 \left(4\alpha^1 h_1 + \alpha^2 h_2 + 4\alpha^3 h_3\right)+12 \alpha^1\alpha^3 h_1h_2 h_3}\right),&\\
 B_1= \frac{\alpha^2\left[ \alpha^2 h_2^2 + 3 \alpha^1 h_1^2 +4\alpha^1 h_1 h_2 + 3 \alpha^3 h_3^2 + 4 \alpha^3 h_2 h_3\right]+ 12 \alpha^1\alpha^3 H h_1 h_3}{\alpha^2 h_2^2 \left(4\alpha^1 h_1 + \alpha^2 h_2 + 4\alpha^3 h_3\right)+12 \alpha^1\alpha^3 h_1 h_2h_3},&\\
  B_2= \frac{6\alpha^2 h_2 +12\alpha^1h_1}{\alpha^2 h_2^2 \left(4\alpha^1 h_1 + \alpha^2 h_2 + 4\alpha^3 h_3\right)+12 \alpha^1\alpha^3 h_1 h_2h_3},&\\
B_3=\frac{ \frac{1}{2}(\alpha^2)^2h_2^3h_3^2 + 2\alpha^1\alpha^2h_1h_2^2h_3^2-\alpha^1\alpha^2h_1^2h_2^2h_3}{\alpha^2 h_2^2 \left(4\alpha^1 h_1 + \alpha^2 h_2 + 4\alpha^3 h_3\right)+12 \alpha^1\alpha^3 h_1 h_2h_3},&\\
  B_4=\frac{(\alpha^2)^2 h_2^3 h_3 + 12 \alpha^1 \alpha^3_1 h_1 h_2 h_3^2+ 4\alpha^1\alpha^2 h_1h_2^2h_3 + 4\alpha^2\alpha^3_1 h_2^2h_3^2}{\alpha^2 h_2^2 \left(4\alpha^1 h_1 + \alpha^2 h_2 + 4\alpha^3 h_3\right)+12 \alpha^1\alpha^3_1 h_1 h_2h_3},&\\
 C=\frac{12\left(\alpha^1h_1 +\alpha^2 h_2 +\alpha^3 h_3\right)}{\alpha^2 h_2 \left(4\alpha^1 h_1 + \alpha^2 h_2 + 4\alpha^3 h_3\right)+12 \alpha^1\alpha^3 h_1 h_3}.
\end{array}
\end{eqnarray}
Now we multiply the  first and second equations in (\ref{perturbeddd}) by $\left(\frac{1}{24}\alpha^2  h_2 + \frac{1}{12} \alpha^3 h_3\right) $ and $-\left(\frac{1}{24}\alpha^2  h_2 + \frac{1}{12} \alpha^1 h_1\right),$ respectively, and add these two equations. An alternate formulation is obtained as the following
\begin{eqnarray}
\label{perturbed-dumb} \left\{ \begin{array}{ll}
  m \ddot w + A w_{xxxx} -  B_1\varsigma \gamma\beta h_2 h_3 \phi^2_x + \gamma\beta B_3p_{xxx}=0,&\\
  C\varsigma \phi^2 -\phi^2_{xx} + B_1w_{xxx} + B_2 p_{xx}=0,&\\
 \mu  h_3 \ddot p   - \beta B_4  p_{xx} + \gamma \beta h_2h_3 \varsigma B_2\phi^2 -\gamma\beta B_3 w_{xxx}  = -V(t)\delta_L&
  \end{array} \right.
\end{eqnarray}
with the boundary and initial conditions
\begin{eqnarray}
\nonumber \begin{array}{ll}
  \left| w, w_x, \phi^2, p~\right|_{x=0}=0,&\\
  \left| A w_{xx} + \gamma\beta B_3p_{x}\right|_{x=L}=0,\quad
  \left|  \beta B_4  p_{x}  +\gamma\beta B_3 w_{xx} \right|_{x=L}=0,&\\
   \left|- A w_{xxx} + B_1\varsigma \gamma\beta h_2 h_3 \phi^2 - \gamma\beta B_3p_{xx}\right|_{x=L}=0,&\\
      \left|\phi^2_{x} -B_1w_{xx} -B_2 p_{x}\right|_{x=L}=0, &\\
( w, p, \dot w, \dot p)(x,0)=(w_0,  p_0, w^1, p^1).
\end{array}
\end{eqnarray}
By \ref{constants}, the boundary conditions can be further simplified to obtain
\begin{eqnarray}
\nonumber && \left| w, w_x, \phi^2, p~\right|_{x=0}=0, \quad \left| w_{xx}= \phi^2_x=p_x\right|_{x=L}=0, \\
\nonumber &&\left| - A w_{xxx} + B_1\varsigma \gamma \beta h_2 h_3 \phi^2 - \gamma\beta B_3p_{xx}\right|_{x=L}=0.
\end{eqnarray}
Now let $\xi=C\varsigma.$ The elliptic equation for $\phi^2$ in (\ref{perturbed-dumb}) can be written as
\begin{eqnarray}
\label{elliptic}\left\{
\begin{array}{ll}- \phi^2_{xx} + \xi \phi^2 = -B_1w_{xxx}-B_2 p_{xx},&\\
 \phi^2(0)=\phi^2_x(L)=0.
\end{array}\right.
\end{eqnarray}
Since $a(\phi, \psi)=\int_0^L \left(\xi ~ \phi_x ~ \psi_x + \phi ~\psi\right)$ is a continuous and coercive bilinear form on $H^1_L(0,L)$, and $b(\psi)=\int_0^L g  \psi~dx$ is a continuous functional on $H^1_L(0,L)$ for given $g\in L^2(0,L),$ by the Lax-Milgram theorem, the elliptic equation $a(\phi,\psi)=b(g,\psi)$ has a unique solution and this solution is $\phi=P_\xi g$ where the operator $P_\xi^{-1}$ is defined by
\begin{eqnarray}\label{Lgamma}P_{\xi}^{-1}:=(- D_x^2+ \xi I)^{-1} .\end{eqnarray}
It follows from (\ref{perturbed-dumb}) that
\begin{eqnarray}\label{solutio} \phi^2= -P_{\xi} \left(B_1 w_{xxx}+B_2 p_{xx}\right).
 \end{eqnarray}
Plugging (\ref{solutio}) in  (\ref{perturbed-dumb}), we obtain
\begin{small}
\begin{eqnarray}
 \nonumber \left\{
  \begin{array}{ll}
   m \ddot w + A w_{xxxx} +(B_1^2 \gamma\beta h_2h_3\varsigma P_\xi w_{xxx})_x + \gamma\beta h_2 h_3(B_1 B_2 \varsigma P_\xi p_{xx})_x+ \gamma\beta B_3p_{xxx}=0,\\
  \mu  h_3 \ddot p   -\beta B_4  p_{xx}-\gamma\beta B_3 w_{xxx}  -\gamma\beta h_2h_3  \varsigma P_\xi \left(B_1B_2 w_{xxx} +B_2^2 p_{xx}\right)   = -V(t)\delta_L.
  \end{array} \right.
\end{eqnarray}
\end{small}
with the boundary and initial conditions
\begin{eqnarray}
\nonumber \left\{\begin{array}{ll}
 \left| w, w_x, p~\right|_{x=0}=0, \quad \left| w_{xx} , p_x\right|_{x=L}=0,& \\
\left| A w_{xxx} +B_1^2 \gamma \beta h_2 h_3 \varsigma (P_\xi w_{xxx}) +B_1 B_2 \varsigma (P_\xi p_{xx})+\gamma\beta B_3p_{xx}\right|_{x=L}=0,\\
( w, p, \dot w, \dot p)(x,0)=(w_0,  p_0, w^1, p^1).&
\end{array}\right.
\end{eqnarray}
\begin{remark} This model describes the coupling between bending of the whole \emph{composite} and magnetic charge of the piezoelectric layer. This can be compared to the model obtained in \cite{O-M1} where the equations of motion describes the coupling between the longitudinal motions on the \emph{single piezoelectric layer} and the magnetic charge equation. This model is novel and is never mathematically analyzed in the literature.
\end{remark}

\begin{lemma} \label{pxi} Let ${\rm Dom} (D_x^2)=\{w\in H^2(0,L) ~:~ w(0)=w_x(L)=0\}.$ Define the operator $J=\xi P_\xi -  I. $ Then $J$ is continuous, self-adjoint and non-positive on $L^2(0,L).$ Moreover, for all $w\in {\rm Dom} (P_\xi ), $
 \begin{eqnarray}J w =  ~P_\xi  D_x^2= (\xi I- D_x^2)^{-1}D_x^2 w.\label{J}\end{eqnarray}
\end{lemma}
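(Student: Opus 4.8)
The plan is to deduce all three assertions from the fact that $P_\xi=(-D_x^2+\xi I)^{-1}$ is the bounded inverse of the self-adjoint, strictly positive Sturm--Liouville operator $P_\xi^{-1}=-D_x^2+\xi I$ acting on ${\rm Dom}(D_x^2)=\{w\in H^2(0,L):w(0)=w_x(L)=0\}$, where $\xi=C\varsigma>0$ with $C>0$ from (\ref{constants}) and $\varsigma=G_2/h_2>0$. First I would confirm self-adjointness of $P_\xi^{-1}$: for $u,v\in{\rm Dom}(D_x^2)$ an integration by parts gives $\langle -u_{xx}+\xi u,\,v\rangle=\int_0^L\left(u_x\bar v_x+\xi u\bar v\right)dx$, the boundary contribution $-u_x(L)\bar v(L)+u_x(0)\bar v(0)$ vanishing by the mixed conditions $u(0)=u_x(L)=0$; symmetry of the resulting form in $u,v$ together with standard Sturm--Liouville theory identifies the adjoint domain with ${\rm Dom}(D_x^2)$. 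Coercivity of this form on $H^1_L(0,L)$ then yields $\|P_\xi\|\le 1/\xi$, so $P_\xi$ is bounded and self-adjoint, and hence so is $J=\xi P_\xi-I$.

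Next I would prove the identity (\ref{J}). For $w\in{\rm Dom}(D_x^2)$ we have $P_\xi^{-1}w=-w_{xx}+\xi w$, hence $D_x^2 w=\xi w-P_\xi^{-1}w$; applying the bounded operator $P_\xi$ and using $P_\xi P_\xi^{-1}=I$ on ${\rm Dom}(D_x^2)$ gives $P_\xi D_x^2 w=\xi P_\xi w-w=(\xi P_\xi-I)w=Jw$, which is exactly (\ref{J}). This also clarifies that the ``${\rm Dom}(P_\xi)$'' in the statement should be read as ${\rm Dom}(D_x^2)$, the natural set on which $D_x^2 w$ is defined ($P_\xi$ itself is bounded on all of $L^2(0,L)$).

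For non-positivity I would compute the quadratic form directly. Given $w\in L^2(0,L)$, put $u=P_\xi w\in{\rm Dom}(D_x^2)$, so $w=-u_{xx}+\xi u$. Using $u(0)=u_x(L)=0$ to discard all boundary terms, one integration by parts gives $\langle P_\xi w,w\rangle=\langle u,-u_{xx}+\xi u\rangle=\|u_x\|^2+\xi\|u\|^2$, while expanding $\|w\|^2=\|-u_{xx}+\xi u\|^2$ and integrating by parts twice gives $\|w\|^2=\|u_{xx}\|^2+2\xi\|u_x\|^2+\xi^2\|u\|^2$. Substituting these into $\langle Jw,w\rangle=\xi\langle P_\xi w,w\rangle-\|w\|^2$ cancels the $\xi^2\|u\|^2$ terms and leaves $\langle Jw,w\rangle=-\|u_{xx}\|^2-\xi\|u_x\|^2\le 0$, since $\xi>0$. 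Equivalently, expanding in the orthonormal eigenbasis of $-D_x^2$ with eigenvalues $\lambda_n=\big((n-\tfrac12)\pi/L\big)^2>0$ one finds $Je_n=-\tfrac{\lambda_n}{\lambda_n+\xi}e_n$, so $\sigma(J)\subset(-1,0]$, which simultaneously gives $\|J\|\le 1$ and non-positivity.

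The integrations by parts are routine; the only genuine care needed is the consistent bookkeeping of the mixed Dirichlet/Neumann boundary terms so that every boundary contribution cancels, and the correct reading of the domain in (\ref{J}). I expect the main (modest) obstacle to be verifying self-adjointness of the unbounded operator $P_\xi^{-1}$ on the precise domain ${\rm Dom}(D_x^2)$; once that is in place, boundedness, self-adjointness, and non-positivity of the bounded operator $J$ follow mechanically.
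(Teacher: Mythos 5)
Your proof is correct and takes essentially the same route as the paper: the identity (\ref{J}) by applying $P_\xi$ to $D_x^2w=\xi w-P_\xi^{-1}w$, and non-positivity by setting $u=P_\xi w$ and integrating by parts with the mixed boundary conditions to reach $\left<Jw,w\right>=-\|u_{xx}\|^2-\xi\|u_x\|^2\le 0$, which is exactly the paper's computation. Your added details (self-adjointness of $P_\xi^{-1}$, the eigenbasis remark) merely flesh out what the paper dismisses as immediate, though note the minor imprecision that $-1$ lies in $\sigma(J)$ as an accumulation point of the eigenvalues $-\lambda_n/(\lambda_n+\xi)$ while $0$ does not.
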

\vspace{0.1in}
\begin{proof} Continuity and self-adjointness easily follow from the definition of $J.$ We first prove that $J$ is a non-positive operator. Let $u\in L^2(0,L).$ Then $P_\xi  u=(\xi I- D_x^2)^{-1} u=s$ implies that $s\in {\rm Dom} (D_x^2)$ and $\xi s- s_{xx}=u$
\begin{eqnarray}\nonumber \left<Ju, u\right>_{L^2(0,L)}&=& \left<(\xi P_\xi -I) u, u\right>_{L^2(0,L)}\\
\nonumber &=& \left< \xi s - \xi s +  s_{xx}, \xi s- s_{xx}\right>_{L^2(0,L)}\\
\nonumber &=& \xi \left< s_{xx}, s \right>_{L^2(0,L)} - \left< s_{xx}, s_{xx} \right>_{L^2(0,L)}\\
\nonumber &=& -\xi \|  s_{x} \|^2_{L^2(0,L)} - \|  s_{xx}\|^2_{L^2(0,L)}.
\end{eqnarray}

To prove (\ref{J}), let $P_\xi D_x^2 w:=v.$ Then $w_{xx}=P_\xi^{-1} v.$ Adding and subtracting $\xi w$ on the left hand side yields $\xi w- \xi w+ w_{xx}=P_\xi^{-1} v,$ we obtain $\xi w- P_\xi^{-1} w = P_\xi^{-1} v,$ and therefore $v=-w+\xi P_\xi w=Jw.$
\end{proof}
The equations in (\ref{perturbed-dumb}) can be simplified as
\begin{eqnarray}
 \label{perturbeddd-an} \left\{
  \begin{array}{ll}
  m \ddot w + A w_{xxxx} + \gamma\beta B_3p_{xxx}+\gamma\beta h_2 h_3 \varsigma \left(B_1^2  J w_{x} +B_1 B_2 J p\right)_x=0,&\\
  \mu  h_3 \ddot p   -\beta B_4  p_{xx}   -\gamma\beta B_3 w_{xxx} &\\
  \quad\quad\quad\quad -\gamma\beta h_2h_3 \varsigma \left(B_2^2 J p + B_1B_2 J w_{x} \right)= -V(t)\delta_L.&
    \end{array} \right.
\end{eqnarray}
with the boundary and initial conditions
\begin{eqnarray}
\label{ivpp}
\begin{array}{ll}
    \left| w, w_x, p~\right|_{x=0}=0, \quad \left| w_{xx} , p_x\right|_{x=L}=0, &\\
   \left| A w_{xxx} +\gamma \beta h_2 h_3 B_1^2 \varsigma (J w_{x})   +\gamma\beta h_2 h_3 B_1 B_2 \varsigma (J p)+\gamma\beta B_3p_{xx}  \right|_{x=L}=0,&\\
  ( w, p, \dot w, \dot p)(x,0)=(w_0,  p_0, w^1, p^1).
\end{array}
\end{eqnarray}

\noindent {\bf{Semigroup well-posedness:}} Define $\mc{H} = \mathrm V \times \mathrm H$ where
\begin{eqnarray}
\begin{array}{ll}
  \mathrm V= H^2_L(0,L) \times H^1_L(0,L),~ \mathrm H= \mX^2=(\Ltwo(0,L))^2.
\end{array}
\end{eqnarray}
The  energy associated with (\ref{perturbeddd-an})-(\ref{ivpp}) is
\begin{eqnarray}
\nonumber && \mathrm{E}(t) =\frac{1}{2}\int_0^L \left\{ m |\dot w|^2 + \mu h_3  |\dot p|^2 + A |w_{xx}|^2 + \beta B_4 |p_x|^2 + \gamma \beta B_3 p_x  \bar w_{xx}  \right.\\
\nonumber && \left.+\gamma \beta B_3 \bar p_x  \ w_{xx}-\gamma \beta h_2 h_3 \varsigma (J(B_1 w_x + B_2 p))(B_1 \bar w_x + B_2 \bar p) \right\} ~dx\\
 \nonumber && =\frac{1}{2}\int_0^L \left\{ m |\dot w|^2 + \mu h_3  |\dot p|^2 -\gamma \beta h_2 h_3 \varsigma (J(B_1 w_x + B_2 p))(B_1 \bar w_x + B_2 \bar p)\right.\\
  \nonumber  && \left.   + \left<\left[ \begin{array}{ll}
 A & \gamma\beta B_3\\
 \gamma\beta B_3 & \beta B_4
 \end{array} \right]\left[ \begin{array}{l}
 w_{xx}\\
 p_x
 \end{array} \right], \left[ \begin{array}{l}
 w_{xx}\\
  p_{x}
 \end{array} \right]\right>_{\mathbb{C}^2} \right\} ~dx.\quad\quad\quad
\end{eqnarray}
 This motivates definition of the inner product on $\mc{H}$
\begin{eqnarray}
\nonumber \begin{array}{ll}
 \left<\left[ \begin{array}{l}
 u_1 \\
 u_2 \\
 u_3\\
 u_4
 \end{array} \right], \left[ \begin{array}{l}
 v_1 \\
 v_2 \\
 v_3\\
 v_4
 \end{array} \right]\right>_{\mc{H}}= \left<\left[ \begin{array}{l}
 u_3\\
 u_4
 \end{array} \right], \left[ \begin{array}{l}
 v_3\\
 v_4
 \end{array} \right]\right>_{\mathrm H}+  \left<\left[ \begin{array}{l}
 u_1 \\
 u_2
 \end{array} \right], \left[ \begin{array}{l}
 v_1 \\
 v_2
 \end{array} \right]\right>_{\mathrm V} & \\
 =\int_0^L \left\{mu_3 { {\bar v}_3}+ \mu h_3  u_4 { {\bar v}_4} -\gamma \beta h_2 h_3 \varsigma (J(B_1 (u_1)_x + B_2 u_2))(B_1 (\bar u_1))_x + B_2 (\bar u_2)) \right. &\\
  +\left. \left<\left[ \begin{array}{ll}
 A & \gamma\beta B_3\\
 \gamma\beta B_3 & \beta B_4
 \end{array} \right]\left[ \begin{array}{l}
 (u_1)_{xx}\\
 (u_2)_x
 \end{array} \right], \left[ \begin{array}{l}
 (u_1)_{xx}\\
  (u_2)_{x}
 \end{array} \right]\right>_{\mathbb{C}^2}  \right\}~dx.
 \end{array}
 \end{eqnarray}
Here $\langle \, , \, \rangle_{\mc H} $ defines an inner product with induced energy norm since $J$ is a non-positive operator, and the matrix $\left[ \begin{array}{ll}
 A & \gamma\beta B_3\\
 \gamma\beta B_3 & \beta B_4
 \end{array} \right]$ is positive definite and the determinant is
\begin{eqnarray}
 \nonumber   \left.
\begin{array}{ll}
AB_4 - \gamma^2 \beta B_3^2= \frac{4}{3} h_1^5 h_2^4 h_3 \alpha_1^3 \alpha _2^2+\frac{5}{3} h_1^4 h_2^5 h_3 \alpha_1^2 \alpha_2^3 +\frac{1}{3} h_1^3 h_2^6 h_3 \alpha_1 \alpha_2^4 & \\
 +4 h_1^5 h_2^3 h_3^2 \alpha _1^3 \alpha _2 \alpha _3 +\frac{16}{3} h_1^4 h_2^4 h_3^2 \alpha_1^2 \alpha_2^2 \alpha_3  +\frac{4}{3} h_1^2 h_2^4 h_3^4 \alpha_1^2 \alpha_2^2 \alpha_3& \\
 +\frac{4}{3} h_1^3 h_2^5 h_3^2 \alpha_1 \alpha_2^3 \alpha _3+\frac{2}{3} h_1 h_2^5 h_3^4 \alpha_1 \alpha_2^3 \alpha_3 +h_1 h_2^5 h_3^2 \alpha_1 \alpha_2^3 \tilde{\alpha}_3 \left(\frac{4}{3}h_1^2-h_1h_3 +2 h_3^2\right)  &\\
 +\frac{1}{12} h_2^6 h_3^4 \alpha_2^4 \alpha_3 +\frac{1}{4}h_2^6 h_3^4 \alpha_2^4 \tilde{\alpha_3}+4 h_1^2 h_2^3 h_3^5 \alpha_1^2 \alpha_2 \alpha_3^2 +\frac{7}{3} h_1 h_2^4 h_3^5 \alpha_1 \alpha_2^2 \alpha _3^2&\\
+\frac{1}{3} h_2^5 h_3^5 \alpha_2^3 \alpha_3^2+4 h_1^5 h_2^3 h_3^2 \alpha_1^3 \alpha_2 \tilde{\alpha}_3 +\frac{19}{3} h_1^4 h_2^4 h_3^2 \alpha_1^2 \alpha_2^2 \tilde{\alpha}_3  +12 h_1^5 h_2^2 h_3^3 \alpha_1^3 \alpha_3 \tilde{\alpha}_3 & \\
 +\frac{4}{3} h_2^5 h_3^5 \alpha_2^3 \alpha_3 \tilde{\alpha}_3+\frac{28}{3} h_1 h_2^4 h_3^5 \alpha_1 \alpha_2^2 \alpha_3 \tilde{\alpha}_3&\\
  + h_1^2 h_2^3 h_3^3 \alpha_1^2 \alpha_2 \alpha_3 \tilde{\alpha}_3 \left(20 h_1^2-12h_1h_3 +16 h_3^2\right)  +\frac{4}{3} h_1^3 h_2^4 h_3^3 \alpha_1 \alpha_2^2 \alpha_3 \tilde{\alpha}_3 &\\
+12 h_1^2 h_2^2 h_3^6 \alpha_1^2 \alpha_3^2 \tilde{\alpha}_3 +8 h_1 h_2^3 h_3^6 \alpha_1 \alpha_2 \alpha_3^2 \tilde{\alpha}_3+\frac{4}{3} h_2^4 h_3^6 \alpha_2^2 \alpha_3^2 \tilde{\alpha}_3>0.
 \end{array} \right.
\end{eqnarray}

Define  the operator $\mc A: {\text{Dom}}(\mc A)\subset \mc H \to \mc H$ with ${\rm {Dom}}(\mc A) =  \{(\vec z, \vec {\tilde z}) \in V\times V, A\vec z \in \mathrm V' \},$
where $\mc A= \left[ {\begin{array}{*{20}c}
   0_{2\times 2}  & I_{2\times 2} \\
       A  &  0_{2\times 2}  \\
\end{array}} \right]$ and
\begin{eqnarray}
\label{hom-A} A= \left[ {\begin{array}{*{20}c}
   -\frac{A}{m}D_x^4 -\frac{\gamma\beta h_2 h_3 \varsigma B_1^2}{m} D_x J D_x  & -\frac{\gamma\beta h_2 h_3 \varsigma B_1 B_2}{m} D_x J -\frac{\gamma\beta B_3}{m} D_x^3 \\
     \frac{\gamma\beta h_2 h_3 \varsigma B_1 B_2}{\mu} J D_x + \frac{\gamma \beta B_3}{\mu} D_x^3 & \frac{\beta B_4}{\mu h_3}D_x^2 + \frac{\gamma\beta h_2 \varsigma B_2^2}{\mu} J
\end{array}}  \right],
\end{eqnarray}
\begin{eqnarray}
 \nonumber   \left.
\begin{array}{ll}
{\rm {Dom}}(A) = \left(H^4(0,L)\cap H^2_L(0,L)\right) \times (H^3(0,L)\cap H^1_L(0,L))    \bigcap  & \\
\{ \vec z=(z_1, z_2)^{\rm T}, A\vec z \in \mathrm H, \left.~(z_1)_{xx}=(z_2)_x\right|_{x=L} = 0, & \\
\left.  \quad A {z_1}_{xxx} +\gamma \beta h_2 h_3 B_1^2 \varsigma (J {z_1}_{x}) +\gamma\beta h_2 h_3 B_1 B_2 \varsigma (J z_2) +\gamma\beta B_3{z_2}_{xx}\in H^1_R(0,L)\right\}
 \end{array} \right.
\end{eqnarray}
where $H^1_R(0,L)=\{\varphi\in H^1(0,L): ~\varphi(L)=0\}.$ It is now obvious that $ \mc A z \in \mc H$ as $z\in {\rm {Dom}}(\mc A).$ Define the control operator $B$
 \begin{eqnarray}
\nonumber   && B_0 \in \mathcal{L}(\mathbb{C}, \mathrm V'), ~ \text{with} ~ B_0 =   \left[ \begin{array}{c}
0 \\
-\frac{1}{\mu h_3}  \delta_L
 \end{array} \right], \\
 \nonumber && B \in \mathcal{L}(\mathbb{C} , {\rm Dom}(\mc A)'), ~ \text{with} ~ B=   \left[ \begin{array}{c} 0_{2\times 1} \\ B_0 \end{array} \right].
 \end{eqnarray}
where  $\mathrm V'$ is the dual of $\mathrm V$ pivoted with respect $\mathrm H.$

Writing $\varphi=[w, p, \dot w, \dot p]^{\rm T},$ the control  system (\ref{perturbeddd-an})-(\ref{ivpp})  with the voltage controller $V(t)$ can be put into the  state-space form
\begin{eqnarray}
\label{Semigroup-mm} \left\{\begin{array}{ll}
\dot \varphi =  \underbrace{\left[ {\begin{array}{*{20}c}
   0 & I_{2\times 2}  \\
   -A & 0  \\
\end{array}} \right]}_{\mc A}  \varphi +\underbrace{ \left[ \begin{array}{c} 0 \\ B_0 \end{array} \right]}_B V(t) , \quad \varphi(x,0) =  \varphi ^0.
\end{array}\right.
\end{eqnarray}
 \begin{lemma} \label{skeww}The operator $\mc{A}$  satisfies $\mc{A}^*=-\mc{A}$ on  $\mc{H},$ and
 \begin{eqnarray}\nonumber {\rm Re}\left<\mc{A} \varphi, \varphi\right>_{\mc{H}}={\rm Re}\left<\mc{A}^*\varphi, \varphi\right>_{\mc{H}}= 0.\end{eqnarray}
 Also, $\mc{A} $ has a compact resolvent.
\end{lemma}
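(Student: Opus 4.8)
The plan is to establish skew-adjointness via the Lumer--Phillips/Stone route and then read off the energy identity and the compactness of the resolvent as consequences. First I would verify that $\mc{A}$ is \emph{skew-symmetric}, i.e. $\left<\mc{A}\varphi,\psi\right>_{\mc{H}}=-\left<\varphi,\mc{A}\psi\right>_{\mc{H}}$ for all $\varphi,\psi\in\mathrm{Dom}(\mc{A})$. Writing $\varphi=(\varphi_1,\varphi_2,\varphi_3,\varphi_4)$ and $\psi=(\psi_1,\psi_2,\psi_3,\psi_4)$, the top block of the structure $\mc{A}=\bigl[\begin{smallmatrix}0&I\\-A&0\end{smallmatrix}\bigr]$ from \eqref{Semigroup-mm} sends the position pair $(\varphi_1,\varphi_2)$ to the velocity pair $(\varphi_3,\varphi_4)$, and the bottom block applies the spatial operator $A$ of \eqref{hom-A} to $(\varphi_1,\varphi_2)$. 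Substituting into the energy inner product and integrating by parts in $x$ moves all derivatives off $\varphi$ onto $\psi$; the $D_x^4$, $D_x^3$ and $D_x^2$ terms produce boundary contributions at $x=0$ and $x=L$. These are killed exactly by the clamped conditions $w(0)=w_x(0)=p(0)=0$ and by $w_{xx}(L)=p_x(L)=0$ together with the natural condition built into $\mathrm{Dom}(A)$, namely that $A z_{1,xxx}+\gamma\beta h_2h_3 B_1^2\varsigma (Jz_{1,x})+\gamma\beta h_2h_3 B_1B_2\varsigma (Jz_2)+\gamma\beta B_3 z_{2,xx}$ vanishes at $x=L$. The nonlocal contributions are handled with the self-adjointness of $J$ from Lemma~\ref{pxi}, so that pairings such as $\left<D_xJD_x\varphi_1,\psi_1\right>$ are symmetric modulo boundary terms. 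Collecting everything yields skew-symmetry, and setting $\psi=\varphi$ gives $\mathrm{Re}\left<\mc{A}\varphi,\varphi\right>_{\mc{H}}=0$.

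Next I would upgrade skew-symmetry to skew-adjointness by proving $\mathrm{Range}(I-\mc{A})=\mc{H}$ and, by the identical argument applied to $-\mc{A}$, $\mathrm{Range}(I+\mc{A})=\mc{H}$. Given $F=(f_1,f_2,f_3,f_4)\in\mc{H}$, the equation $(I-\mc{A})\varphi=F$ reduces, after eliminating $\varphi_3=\varphi_1-f_1$ and $\varphi_4=\varphi_2-f_2$, to a coupled elliptic system for the position pair $(\varphi_1,\varphi_2)\in\mathrm V=H^2_L(0,L)\times H^1_L(0,L)$. I would cast this in weak form and apply the Lax--Milgram theorem. Coercivity of the associated sesquilinear form on $\mathrm V$ is precisely the statement that the energy pairing is an inner product: it rests on the positive definiteness of $\bigl[\begin{smallmatrix}A&\gamma\beta B_3\\\gamma\beta B_3&\beta B_4\end{smallmatrix}\bigr]$, guaranteed by the inequality $AB_4-\gamma^2\beta B_3^2>0$ computed above, which controls the $\|w_{xx}\|^2+\|p_x\|^2$ part, and on the non-positivity of $J$ from Lemma~\ref{pxi}, which makes the nonlocal shear term enter with a favorable sign. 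A skew-symmetric operator with $\mathrm{Range}(I-\mc{A})=\mathrm{Range}(I+\mc{A})=\mc{H}$ is skew-adjoint, so $\mc{A}^*=-\mc{A}$; the identity $\mathrm{Re}\left<\mc{A}^*\varphi,\varphi\right>_{\mc{H}}=0$ is then immediate from $\mc{A}^*=-\mc{A}$.

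Finally, compactness of the resolvent follows from elliptic regularity together with Rellich--Kondrachov. The solution produced by Lax--Milgram lies in $\mathrm{Dom}(\mc{A})$, whose first two components are in $H^4(0,L)\cap H^2_L(0,L)$ and $H^3(0,L)\cap H^1_L(0,L)$ while the velocity components gain one order of $H^2_L$ and $H^1_L$ regularity; since on the bounded interval $(0,L)$ each of the embeddings $H^4\hookrightarrow H^2$, $H^3\hookrightarrow H^1$, $H^2\hookrightarrow\Ltwo(0,L)$ and $H^1\hookrightarrow\Ltwo(0,L)$ is compact, the inclusion $\mathrm{Dom}(\mc{A})\hookrightarrow\mc{H}$ is compact and hence $(I-\mc{A})^{-1}$ is a compact operator on $\mc{H}$. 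I expect the main obstacle to be the coercivity verification for Lax--Milgram: because $A$ in \eqref{hom-A} is nonlocal through $J$ and fully couples the bending variable $w$ with the charge variable $p$, obtaining a clean lower bound requires combining the matrix positivity with the sign of $J$ and absorbing the $w$--$p$ cross terms, rather than treating the two equations in isolation.
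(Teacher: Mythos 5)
Your proposal is correct and follows the same overall strategy as the paper's proof of Lemma \ref{skeww}: skew-symmetry by integration by parts (using the self-adjointness of $J$ from Lemma~\ref{pxi} and the boundary conditions encoded in the domain), a Lax--Milgram solvability step to upgrade skew-symmetry to skew-adjointness, and compact embeddings for the compact resolvent. The one genuine difference lies in the solvability step: the paper proves that $\mc{A}$ itself is onto, i.e.\ it solves $\mc{A}u=v$ for arbitrary data and invokes the criterion (Tucsnak--Weiss, Proposition 3.7.3) that a skew-symmetric surjective operator is skew-adjoint, whereas you prove $\mathrm{Range}(I\pm\mc{A})=\mc{H}$, the classical deficiency-index criterion. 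Your variant makes coercivity of the sesquilinear form automatic, since the mass form $c$ is added to the energy form $a$ and the nonlocal $J$-term enters with a favorable sign exactly as you describe; the paper's variant needs coercivity of $a$ alone (which holds here via the positive definiteness of the matrix $\bigl[\begin{smallmatrix}A&\gamma\beta B_3\\ \gamma\beta B_3&\beta B_4\end{smallmatrix}\bigr]$ and Poincar\'e's inequality on $H^2_L\times H^1_L$), but in return it directly yields $0\in\rho(\mc{A})$ and hence boundedness of $\mc{A}^{-1}$, which is how the compact-resolvent claim is read off there. Either route is legitimate; your explicit treatment of compactness via the $H^4\times H^3$ regularity of $\mathrm{Dom}(A)$ and Rellich--Kondrachov is in fact more complete than the paper's, which leaves that final claim largely implicit.
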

\begin{proof} Let $\vec y=\left[ \begin{array}{l}
 y_1\\
 y_2
 \end{array} \right], \vec z=\left[ \begin{array}{l}
 z_1\\
 z_2
 \end{array} \right] \in {\rm Dom}(\mc A).$ Integration by parts and using the boundary conditions (\ref{ivpp}) yield

\begin{eqnarray}
 \nonumber   \left.
\begin{array}{ll}
   \left<\mc A \vec y, \vec z\right>_{\mc H} = \int_0^L\left\{  -A(y_1)_{xxxx}\bar z_3     -\gamma\beta h_2 h_3 \varsigma B_1^2 (J (y_1)_x)_x \bar z_3  \right.&\\
\quad -\gamma\beta h_2 h_3 \varsigma B_1 B_2  (J y_2)_x \bar z_3 -\gamma\beta B_3 (y_2)_{xxx}\bar z_3 + \gamma\beta h_2 h_3 \varsigma B_1 B_2 J (y_1)_x \bar z_4 &\\
\quad + \gamma \beta h_3 B_3 (y_1)_{xxx} \bar z_4+  \beta B_4(y_2)_{xx} \bar z_4 + \gamma\beta h_2h_3 \varsigma B_2^2 J y_2 \bar z_4&\\
 \quad-\gamma\beta h_2 h_3 \varsigma (B_1J (y_3)_x + B_2 Jy_4) (B_1 (\bar z_1)_x + B_2 \bar z_2)+A(y_3)_{xx}(\bar z_1)_{xx} & \\
 \left.\quad+ \gamma\beta B_3  (y_4)_x (\bar z_1)_{xx}+ \gamma \beta B_3 (y_3)_{xx} (\bar z_2)_x+  \beta B_4(y_4)_{x} (\bar z_2)_x \right\} dx\\
     = \int_0^L\left\{  -A(y_1)_{xx}(\bar z_3)_{xx} +\gamma\beta h_2 h_3 \varsigma B_1^2 J y_1 (\bar z_3)_x    +\gamma\beta h_2 h_3 \varsigma B_1 B_2  J y_2) (\bar z_3)_x \right.&\\
 \quad +\gamma\beta B_3 (y_2)_{xx}(\bar z_3)_x  - \gamma\beta h_2 h_3 \varsigma B_1 B_2  y_1 (J\bar z_4)_x- \gamma \beta h_3 B_3 (y_1)_{xx} (\bar z_4)_x &\\
    \quad-  \beta B_4(y_2)_{x} (\bar z_4)_x+ \gamma\beta h_2h_3 \varsigma B_2^2 y_2 J\bar z_4 -\gamma\beta h_2 h_3 \varsigma \left(-B_1^2 (J(y_3)_x)\bar z_1 \right.&\\
     \left.\quad  - B_1 B_2 (J (\bar z_2)_x) y_3-B_1 B_2 (Jy_4)_x \bar z_1  + B_2 ^2 Jy_4 \bar z_2 \right) +A(y_3)_{xxxx}\bar z_1 &\\
     \left. \quad- \gamma\beta B_3  (y_4)_{xx} (\bar z_1)_{x} -\gamma \beta B_3 (y_3)_{x} (\bar z_2)_{xx}-  \beta B_4 y_4 (\bar z_2)_{xx} \right\} ~dx&\\
    =\left< \vec y, \mc A^* \vec z\right>_{\mc H} &\\
    =\left< \vec y, -\mc A \vec z\right>_{\mc H}.
 \end{array} \right.
\end{eqnarray}

This shows that $\mc A$ is skew-symmetric. To prove that $\mc A$ is skew-adjoint on $\mc H,$ i.e. $\mc A^*=-\mc A$ on $\mc H,$   it is required to show that  for any  $v\in \mathrm{H}$  there is $u \in  \text{Dom}(\mc A )$ so that   $\mc A u=v,$ see \cite[Proposition 3.7.3]{Weiss-Tucsnak}.
This is equivalent to solving the  system of equations  for $ u \in  \text{Dom}(\mc A ).$
Using (\ref{coef}) to simplify the equations leads to
\begin{eqnarray}
\nonumber  m v_3 &=&-A(u_1)_{xxxx} -\gamma\beta h_2 h_3 \varsigma B_1^2 (J (u_1)_x)_x \\
 \label{cra1} && ~~  -\gamma\beta h_2 h_3 \varsigma B_1 B_2  (J u_2)_x -\gamma\beta B_3 (u_2)_{xxx} \\
\nonumber  \mu h_3 v_4 &=& \gamma\beta h_2 h_3 \varsigma B_1 B_2 J (u_1)_x + \gamma \beta h_3 B_3 (u_1)_{xxx}\\
 \label{cra2} && ~~+  \beta B_4(u_2)_{xx}  + \gamma\beta h_2h_3 \label{cra3} \varsigma B_2^2 (J u_2) \\
\nonumber  v_1&=&u_3\\
\nonumber  v_2&=&u_4.
\end{eqnarray}
Define the bilinear forms $a$ and $c$ by
\begin{eqnarray}
\nonumber a\left(\left[ \begin{array}{l}
 u_1\\
 u_2
 \end{array} \right], \left[ \begin{array}{l}
 F\\
 G
 \end{array} \right]\right)&=& -\gamma \beta h_2 h_3 \varsigma\left< J(B_1 (u_1)_x + B_2 u_2),B_1 (v_1))_x + B_2  v_2  \right>_{\mX} \\
 \nonumber && + \left<\left[ \begin{array}{ll}
 A & \gamma\beta B_3\\
 \gamma\beta B_3 & \beta B_4
 \end{array} \right]\left[ \begin{array}{l}
 (u_1)_{xx}\\
 (u_2)_x
 \end{array} \right], \left[ \begin{array}{l}
 (v_1)_{xx}\\
  (v_2)_{x}
 \end{array} \right]\right>_{\mX^2},\\
 \nonumber c\left(\left[ \begin{array}{l}
 u_1\\
 u_2
 \end{array} \right], \left[ \begin{array}{l}
 F\\
 G
 \end{array} \right]\right)&=&m\left<u_1,v_1\right>_{\mX}+\mu h_3\left<u_2, v_2\right>_{\mX}.
 \end{eqnarray}
 Let $(F,G)\in H^2_L(0,L) \times H^1(0,L).$ If we multiply (\ref{cra1}) by $F$ and (\ref{cra2}) by  $G,$ then integrate by parts, we obtain
\begin{eqnarray}a\left(\left[ \begin{array}{l}
 u_1\\
 u_2
 \end{array} \right], \left[ \begin{array}{l}
 F\\
 G
 \end{array} \right]\right)+c\left(\left[ \begin{array}{l}
 u_1\\
 u_2
 \end{array} \right], \left[ \begin{array}{l}
 F\\
 G
 \end{array} \right]\right)=0.\label{form}\end{eqnarray}
The bilinear forms $a$ and $c$ are symmetric, bounded and coercive $H^2_L(0,L)$ and $H^1_L(0,L),$ respectively. Therefore, by Lax-
Milgram theorem, there exists a unique pair $(u_1, u_2)\in H^2_L(0,L) \times H^1_L(0,L)$
satisfying (\ref{form}).

The last step of our proof is to show that $\left[ \begin{array}{l}
 u_1\\
 u_2
 \end{array} \right]\in {\rm Dom}(A).$ However, this step is straight forward since (\ref{form}) is the definition of ${\rm Dom}(A).$ To see this let $\left[ \begin{array}{l}
 u_1\\
 u_2
 \end{array} \right] \in H^2_L(0,L)\times H^1_L(0,L),$ $\left[ \begin{array}{l}
F\\
G
 \end{array} \right]\in H^2_L(0,L)\times H^1_L(0,L),$ and all boundary conditions  hold. A simple calculation shows that
\begin{eqnarray}
\nonumber &&c\left(A \left[ \begin{array}{l}
 u_1\\
 u_2
 \end{array} \right], \left[ \begin{array}{l}
 F\\
 G
 \end{array} \right] \right) \\
 \nonumber &&=c\left( \left[ \begin{array}{l}
 \frac{1}{m}\left(-A(u_1)_{xxxx} -\gamma\beta h_2 h_3 \varsigma \left(B_1^2 (J (u_1)_x)_x   \right.\right.\\
 \frac{1}{\mu h_3}\left(\gamma\beta h_2 h_3 \varsigma \left(B_1 B_2 J (u_1)_x +B_2^2 (J u_2)\right) \right)
 \end{array} \right. \right.\\
 \nonumber &&\quad\quad\left. \left. \begin{array}{l}
   \left.\left. + B_1 B_2  (J u_2)_x\right) -\gamma\beta B_3 (u_2)_{xxx}\right)\\
        \left.\gamma \beta h_3 B_3 (u_1)_{xxx} +  \beta B_4(u_2)_{xx} \right)
 \end{array} \right], \left[ \begin{array}{l}
 F\\
 G
 \end{array} \right] \right)\\
 \nonumber &&=\left< \left[ \begin{array}{l}
-A(u_1)_{xxxx} -\gamma\beta h_2 h_3 \varsigma \left(B_1^2 (J (u_1)_x)_x  \right. \\
\gamma\beta h_2 h_3 \varsigma \left(B_1 B_2 J (u_1)_x +B_2^2 (J u_2)\right)
 \end{array} \right.\right. \\
\nonumber &&\quad\quad\left. \left. \begin{array}{l}
\left.  + B_1 B_2  (J u_2)_x\right) -\gamma\beta B_3 (u_2)_{xxx}\\
 \gamma \beta h_3 B_3 (u_1)_{xxx} +  \beta B_4(u_2)_{xx}
 \end{array} \right], \left[ \begin{array}{l}
 F\\
 G
 \end{array} \right] \right>_{\mX^2}\\
  \nonumber && = -a\left(\left[ \begin{array}{l}
 u_1\\
 u_2
 \end{array} \right], \left[ \begin{array}{l}
 F\\
 G
 \end{array} \right] \right),\end{eqnarray}
and therefore
\begin{eqnarray}\label{salak2} a\left(\left[ \begin{array}{l}
 u_1\\
 u_2
 \end{array} \right], \left[ \begin{array}{l}
 F\\
 G
 \end{array} \right]\right)+c\left(\left[ \begin{array}{l}
 u_1\\
 u_2
 \end{array} \right], \left[ \begin{array}{l}
 F\\
 G
 \end{array} \right] \right)=0.
 \end{eqnarray}
 Now if we let $\left[ \begin{array}{l}
 F\\
G
 \end{array} \right]=\left[ \begin{array}{l}
 F_1\\
 G_1
 \end{array} \right]\in (C_0^{\infty}(0,L))^2,$ it follows that
 $$a\left(\left[ \begin{array}{l}
 u_1\\
 u_2
 \end{array} \right], \left[ \begin{array}{l}
 F_1\\
 G_1
 \end{array} \right] \right)+c\left(\left[ \begin{array}{l}
 u_1\\
 u_2
 \end{array} \right], \left[ \begin{array}{l}
 F_1\\
 G_1
 \end{array} \right] \right)=0$$
 holds for all $\left[ \begin{array}{l}
 F_1\\
 G_1
 \end{array} \right]\in (C_0^{\infty}(0,L))^2.$ Therefore in $((C_0^{\infty}(0,L))^2)'$ we have
 \begin{eqnarray}
\label{salak} \left\{
  \begin{array}{ll}
A(u_1)_{xxxx} +\gamma\beta h_2 h_3 \varsigma \left[B_1^2 (J (u_1)_x)_x + B_1 B_2  (J u_2)_x\right] +\gamma\beta B_3 (u_2)_{xxx}=0, & \\
  \gamma\beta h_2 h_3 \varsigma \left[B_1 B_2 J (u_1)_x +B_2^2 (J u_2)\right] + \gamma \beta h_3 B_3 (u_1)_{xxx} +  \beta B_4(u_2)_{xx}
=0. &
 \end{array} \right.
\end{eqnarray}
 If we substitute (\ref{salak}) into (\ref{form}) we obtain
that $\left[ \begin{array}{l}
 u_1\\
 u_2
 \end{array} \right]$ satisfies (\ref{salak2}). This together with (\ref{salak}) implies that $\left[ \begin{array}{l}
 u_1\\
 u_2
 \end{array} \right]\in {\rm Dom}(A).$
 \end{proof}

Consider all external volume and surface forces are zero. The only external force acting on the system is purely electrical, i.e. $V(t)\ne 0.$ We have the following well-posedness theorem for (\ref{Semigroup-mm}).
\begin{theorem}\label{w-pff}
Let $T>0,$ and $V(t)\in \Ltwo(0,T).$ For any $\varphi^0 \in \mc{H},$ $\varphi\in C[[0,T]; \mc H]$ and there exists a positive constants $c_1(T)$
such that (\ref{Semigroup-mm}) satisfies
      \begin{eqnarray}\nonumber\|\varphi (T) \|^2_{\mc{H}} &\le& c_1 (T)\left\{\|\varphi^0\|^2_{\mc{H}} + \|V\|^2_{\Ltwo(0,T)}\right\}.
      \end{eqnarray}
\end{theorem}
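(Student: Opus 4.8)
The plan is to recast the abstract Cauchy problem (\ref{Semigroup-mm}) in the semigroup framework and to reduce the claimed energy estimate to an admissibility (hidden-regularity) property of the single boundary control operator $B$. First I would invoke Lemma \ref{skeww}: since $\mc A$ is skew-adjoint with compact resolvent, Stone's theorem (see \cite{Weiss-Tucsnak}) guarantees that $\mc A$ generates a $C_0$-group of \emph{unitary} operators $\{e^{t\mc A}\}_{t\in\mathbb{R}}$ on $\mc H$. In particular, for $V\equiv 0$ the solution $\varphi(t)=e^{t\mc A}\varphi^0$ is energy-preserving, $\|\varphi(t)\|_{\mc H}=\|\varphi^0\|_{\mc H}$, so the homogeneous part of the estimate is immediate. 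The only genuine difficulty is the forcing term $BV$, in which $B$ is unbounded on $\mc H$ (it maps into $({\rm Dom}(\mc A))'$ through the Dirac mass $\delta_L$).

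Next I would write the mild solution by variation of parameters,
\[
\varphi(t)=e^{t\mc A}\varphi^0+\Phi_t V,\qquad \Phi_t V:=\int_0^t e^{(t-s)\mc A}BV(s)\,ds,
\]
and observe that the theorem is equivalent to the boundedness $\Phi_T\in\mathcal{L}(\Ltwo(0,T),\mc H)$, i.e. to the admissibility of $B$. By the duality between admissible control and observation operators (\cite{Weiss-Tucsnak}), and because $e^{t\mc A^*}=e^{-t\mc A}$ is again unitary, this is equivalent to the observation estimate
\[
\int_0^T |B^*e^{-t\mc A}z|^2\,dt\le c(T)\|z\|^2_{\mc H},\qquad z\in{\rm Dom}(\mc A).
\]
A direct computation of $B^*$ in the energy inner product (using that the $\dot p$-slot carries the weight $\mu h_3$ and $B_0=[0,-\tfrac{1}{\mu h_3}\delta_L]^{\rm T}$) gives $B^*\varphi=-\dot p(L)$, the normal trace of the charge velocity at the free end, which is precisely the total induced current at the electrodes. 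Thus the whole theorem reduces to the boundary-trace estimate $\int_0^T|\dot p(L,t)|^2\,dt\le c(T)\|\varphi^0\|^2_{\mc H}$ for homogeneous solutions.

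To prove this hidden-regularity estimate I would use the multiplier method on the homogeneous version of (\ref{perturbeddd-an}): multiply the $p$-equation by a multiplier of the form $q(x)p_x$, with $q\in C^1[0,L]$, $q(0)=0$, $q(L)=1$ (and use an analogous multiplier for the fourth-order $w$-equation), integrate over $(0,L)\times(0,T)$ and integrate by parts in $x$ and $t$. The conserved energy from the first step controls all interior terms, while the boundary terms at $x=L$ produce $\int_0^T|\dot p(L)|^2$ up to a multiple of $E(0)=\|\varphi^0\|^2_{\mc H}$. The main obstacle is the nonlocal operator $J=\xi P_\xi-I$ appearing in (\ref{hom-A}): the multiplier identities for the $J$-terms are not purely local, so they must be handled by exploiting that $J$ is bounded, self-adjoint and nonpositive on $L^2(0,L)$ (Lemma \ref{pxi}), for instance through the commutation $JD_x=D_xJ$ on the relevant domain together with the uniform bound $\|J\psi\|_{\Ltwo}\le\|\psi\|_{\Ltwo}$, so that these contributions stay absorbed into $c(T)E(0)$.

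Finally I would assemble the pieces. The isometry of the homogeneous flow and the admissibility bound $\|\Phi_T V\|_{\mc H}\le \|\Phi_T\|\,\|V\|_{\Ltwo(0,T)}$ give $\|\varphi(T)\|_{\mc H}\le\|\varphi^0\|_{\mc H}+\|\Phi_T\|\,\|V\|_{\Ltwo(0,T)}$; squaring and using $2ab\le a^2+b^2$ yields the stated inequality with $c_1(T)=2\max\{1,\|\Phi_T\|^2\}$. Strong continuity of the group together with continuity of $t\mapsto\Phi_tV$ gives $\varphi\in C([0,T];\mc H)$, completing the argument. I expect the boundary-trace estimate in the presence of the nonlocal operator $J$ to be the crux of the proof.
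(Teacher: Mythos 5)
Your proposal is correct and takes essentially the same approach as the paper: generation of the solution semigroup from Lemma \ref{skeww} (the paper invokes L\"{u}mer--Phillips for a contraction semigroup, you invoke Stone's theorem for a unitary group --- equivalent here since $\mc A$ is skew-adjoint), followed by admissibility of the control operator $B$, from which the estimate and the continuity in time follow by variation of parameters. The only real difference is completeness: the paper merely \emph{asserts} that $B$ is admissible, whereas you reduce admissibility by duality to the hidden-regularity estimate $\int_0^T|\dot p(L,t)|^2\,dt\le c(T)\|\varphi^0\|^2_{\mc H}$ for the homogeneous flow (after correctly computing $B^*\varphi=-\dot p(L)$ in the weighted energy inner product) and sketch a multiplier proof of it; one caveat is that the commutation $JD_x=D_xJ$ you float is not valid (the boundary conditions built into $P_\xi$ are not preserved by $D_x$), but it is also unnecessary, since the boundedness, self-adjointness and non-positivity of $J$ from Lemma \ref{pxi}, which you also invoke, suffice to absorb those nonlocal terms into $c(T)\|\varphi^0\|^2_{\mc H}$.
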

\begin{proof}   The operator $\mc A: {\rm {Dom}}(\mc A) \to \mc H$ is the  infinitesimal generator of $C_0-$semigroup of contractions by L{\"{u}}mer-Phillips theorem by using Lemma \ref{skeww}.  The operator $B$ defined above is an admissible control operator. The conclusion follows.
\end{proof}
\subsection{Electrostatic Mead-Marcus (M-M) model}

Notice that if the magnetic effects in (\ref{perturbed-dumb}) are neglected, i.e. $\ddot \mu p\equiv 0,$ the last equation can be solved for $p_{xx}.$ Then we obtain the following model
 \begin{eqnarray}
\label{abstractMM}\left\{ \begin{array}{ll}
m \ddot w + {\tilde A} w_{xxxx} -  \beta \gamma h_2h_3 \varsigma{\tilde B}  \phi^2_x=-\frac{\gamma B_3}{B_4}V(t) (\delta_L)_x, &\\
\varsigma{\tilde C} \phi^2 -\phi^2_{xx} + {\tilde B} w_{xxx} =-\frac{B_2}{\beta B_4}V(t) \delta_L,
\end{array}\right.
 \end{eqnarray}
with the simplified boundary conditions
 \begin{eqnarray}
\label{MM-bc}
\begin{array}{ll}
  w(0)=w_x(0)=\phi^2(0)= w_{xx}(L)=\phi^2_x(L)=0,&\\
 - \tilde A w_{xxx}(L) + \beta \gamma h_2h_3 \varsigma{\tilde B} \phi^2(L)=g(t).
 \end{array} \end{eqnarray}
Here the coefficients $\tilde A=A-\frac{\gamma^2 \beta B_3^2}{B_4}>0, \tilde B=B_1-\frac{\gamma B_2 B_3}{B_4}>0, \tilde C=C+ \frac{\gamma h_2 h_3 B_2^2}{B_4} $ are functions of the materials parameters.  This model fits in the form of the abstract Mead-Marcus beam model obtained in \cite{Hansen3,O-Hansen1}. The main difference is the boundary conditions.

Using the definitions of  $J=P_\xi D_x^2$ with $P_\xi=(\varsigma \tilde C I -D_x^2)^{-1}$  in Lemma \ref{pxi},   (\ref{abstractMM})-(\ref{MM-bc}) can be simplified to
\begin{eqnarray}
\nonumber &&m \ddot w + {\tilde A} w_{xxxx}+\gamma\beta \varsigma h_2 h_3 \tilde B^2 (J w_{x})_x=\\
\label{MM} &&\quad-\frac{\gamma}{B_4}\left[ \varsigma h_2 h_3  \tilde B B_2 (P_\xi \delta_L)_x + B_3(\delta_L)_x\right]V(t)\quad\quad
    \end{eqnarray}
    with the boundary conditions
 \begin{eqnarray}
\label{MM-b}\begin{array}{ll}
 w(0)=w_x(0)= w_{xx}(L)=0, ~~
  {\tilde A} w_{xxx}(L)+\gamma\beta \varsigma h_2 h_3 \tilde B^2 J w_{x}(L)=g(t).
 \end{array} \end{eqnarray}

\noindent {\bf{Semigroup well-posedness:}} Consider only the case $g(t)\equiv 0.$ Define $\mc{H} =\mathrm V \times \mathrm H= H^2_L(0,L) \times \Ltwo(0,L).$
The  energy associated with (\ref{perturbeddd-an})-(\ref{ivpp}) is
\begin{eqnarray}
\nonumber  \mathrm{E}(t) =\frac{1}{2}\int_0^L \left\{ m |\dot w|^2  + \tilde A |w_{xx}|^2 -\gamma\beta \varsigma h_2 h_3 \tilde B^2 J w_{x} \bar w_x \right\} ~dx.
\end{eqnarray}
 This motivates definition of the inner product on $\mc{H}$
\begin{eqnarray}
\nonumber && \left<\left[ \begin{array}{l}
 u_1 \\
 u_2
 \end{array} \right], \left[ \begin{array}{l}
 v_1 \\
 v_2
 \end{array} \right]\right>_{\mc{H}}= \left<u_2, v_2\right>_{\mathrm H} + \left<u_1, v_1\right>_{\mathrm V}\\
 \nonumber  &&\quad\quad= \int_0^L \left\{m  u_2 { {\bar v}_2}+ \tilde A (u_2)_{xx} (\bar v_2)_{xx}  -\gamma\beta \varsigma h_2 h_3 \tilde B^2 J (u_1)_{x}  (\bar u_1)_x  \right\}~dx.
 \end{eqnarray}
Define  the operator $\mc A: {\text{Dom}}(\mc A)\subset \mc H \to \mc H$
where
\begin{eqnarray}\label{A-MM}\mc A= \left[ {\begin{array}{*{20}c}
   0 & I \\
       \frac{-1}{m}\left(\tilde A D_x^4+\gamma\beta \varsigma h_2 h_3 \tilde B^2 D_x J D_x\right) &  0  \\
\end{array}} \right]
\end{eqnarray} with
\begin{eqnarray}
 \nonumber   \left.
\begin{array}{ll}
{\rm {Dom}}(\mc A) = \{ (z_1,z_2)\in \mc H, z_2\in H^2_L(0,L),  (z_1)_{xx}(L)=0,&\\
\tilde A (z_1)_{xxx}+\gamma\beta \varsigma h_2 h_3 \tilde B^2  J (z_1)_x \in H^1(0,L),&\\
\tilde A (z_1)_{xxx}(L)+\gamma\beta \varsigma h_2 h_3 \tilde B^2  J (z_1)(L)=0 \}. &
 \end{array} \right.
\end{eqnarray}
 Define the control operator $B \in \mathcal{L}(\mathbb{C} , {\rm Dom}(\mc A)')$ by
 \begin{eqnarray}
\nonumber B=   \left[ \begin{array}{c} 0 \\ -\frac{\gamma}{mB_4}\left[ \varsigma h_2 h_3  \tilde B B_2(P_\xi \delta_L)_x + B_3(\delta_L)_x\right] \end{array} \right]
 \end{eqnarray}
and the dual operator $B^*\in \mathcal{L}( \mc H, \mathbb C)$ is  $$B^*\Phi=\frac{\gamma}{m B_4}\left[ \varsigma h_2 h_3  \tilde B B_2(P_\xi (\Phi_2)_x(L))  + B_3(\Phi_2)_x(L)\right].$$
Writing $\varphi=[w,\dot w]^{\rm T},$ the control  system (\ref{MM})-(\ref{MM-b})  with the voltage controller $V(t)$ can be put into the  state-space form
\begin{eqnarray}
\label{Semigroupp-mmm}\left\{
\begin{array}{ll}
\dot \varphi = {\mc A}  \varphi +B V(t) , \quad \varphi(x,0) =  \varphi ^0.
\end{array}\right.
\end{eqnarray}
\begin{theorem}\label{w-pff}
Let $T>0,$ and $V(t)\in \Ltwo(0,T).$ For any $\varphi^0 \in \mc{H},$ $\varphi\in C[[0,T]; \mc H]$ and there exists a positive constants $c_1(T)$
such that (\ref{Semigroupp-mmm}) satisfies
      \begin{eqnarray}\label{conc}\|\varphi (T) \|^2_{\mc{H}} &\le& c_1 (T)\left\{\|\varphi^0\|^2_{\mc{H}} + \|V\|^2_{\Ltwo(0,T)}\right\}.
      \end{eqnarray}
\end{theorem}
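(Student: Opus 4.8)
The plan is to follow the template of the fully-dynamic Mead--Marcus case (the preceding Theorem \ref{w-pff} for (\ref{Semigroup-mm})): first show that the operator $\mc A$ in (\ref{A-MM}) generates a $C_0$-group on $\mc H$, then verify that the (unbounded) control operator $B$ is admissible, and finally read off the estimate (\ref{conc}) from the variation-of-parameters formula. The two structural ingredients are already in place, namely that $J=P_\xi D_x^2$ is self-adjoint and non-positive (Lemma \ref{pxi}), which guarantees that $\langle\cdot,\cdot\rangle_{\mc H}$ is a genuine inner product with coercive induced norm (here $-\gamma\beta\varsigma h_2h_3\tilde B^2 J(u_1)_x\bar u_{1,x}\ge 0$), and the skew-adjointness technique of Lemma \ref{skeww}.

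\textbf{Generation of the semigroup.} First I would prove $\mc A^\ast=-\mc A$ on $\mc H$, repeating the argument of Lemma \ref{skeww} in the present scalar (single-beam) setting. For $\vec y,\vec z\in{\rm Dom}(\mc A)$ I would compute $\langle\mc A\vec y,\vec z\rangle_{\mc H}$, integrate by parts four times in the $\tilde A D_x^4$ term and twice in the $D_xJD_x$ term, and cancel every boundary contribution at $x=L$ using the conditions $w_{xx}(L)=0$ and $\tilde A w_{xxx}(L)+\gamma\beta\varsigma h_2h_3\tilde B^2 Jw_x(L)=0$ built into ${\rm Dom}(\mc A)$, together with the clamped conditions at $x=0$; the self-adjointness of $J$ lets the mixed $J$-terms pass from $\vec y$ onto $\vec z$. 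This gives $\Re\langle\mc A\varphi,\varphi\rangle_{\mc H}=0$ and skew-symmetry. To upgrade this to skew-adjointness I would check the range condition: for every $v\in\mc H$ solve $\mc A u=v$ by reducing it to a fourth-order elliptic problem for $u_1$ and invoking the Lax--Milgram theorem for the bounded, coercive, symmetric bilinear form attached to the $A$-block, exactly as in Lemma \ref{skeww}. By Stone's theorem $\mc A$ then generates a unitary $C_0$-group $e^{t\mc A}$ on $\mc H$, so the free energy is conserved.

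\textbf{Admissibility of $B$.} This is the main obstacle. Since $\delta_L\in H^{-1}(0,L)$ and $P_\xi$ maps $H^{-1}\to H^1$, the term $(P_\xi\delta_L)_x$ lies in $L^2(0,L)\subset\mathrm H$, so that part of $B$ is a \emph{bounded} control operator and is trivially admissible; the only genuinely unbounded contribution is $B_3(\delta_L)_x$, which is precisely the classical boundary-moment actuation of a clamped--free Euler--Bernoulli beam, collocated with the angular-velocity trace $(\Phi_2)_x(L)=\dot w_x(L)$ appearing in $B^\ast$. Admissibility of $B$ is equivalent to the observation-side trace estimate
\begin{eqnarray}
\nonumber \int_0^T |B^\ast e^{t\mc A}\varphi^0|^2\,dt \le c_T\,\|\varphi^0\|_{\mc H}^2 ,
\end{eqnarray}
i.e.\ a hidden-regularity bound for $\dot w_x(L,t)$ that is \emph{not} supplied by the energy norm alone. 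I would establish it by a multiplier argument on the homogeneous equation (\ref{MM}) with $V\equiv 0$: multiply by $q(x)\,\bar w_x$ with $q\in C^1$, $q(0)=0$, $q(L)=1$, integrate over $[0,L]\times[0,T]$, and integrate by parts, using conservation of energy from the previous step to dominate all interior terms by $\|\varphi^0\|_{\mc H}^2$ while the $x=L$ boundary terms isolate the required trace. The lower-order $J$-contributions are controlled by the boundedness and self-adjointness of $P_\xi$ on $L^2(0,L)$, so they do not spoil the estimate.

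\textbf{Conclusion.} With $\mc A$ generating the unitary group $e^{t\mc A}$ and $B$ admissible, the mild solution of (\ref{Semigroupp-mmm}),
\begin{eqnarray}
\nonumber \varphi(t)=e^{t\mc A}\varphi^0+\int_0^t e^{(t-s)\mc A}B\,V(s)\,ds ,
\end{eqnarray}
belongs to $C([0,T];\mc H)$, and the admissibility bound together with unitarity of $e^{t\mc A}$ yields $\|\varphi(T)\|_{\mc H}^2\le c_1(T)\big(\|\varphi^0\|_{\mc H}^2+\|V\|_{\Ltwo(0,T)}^2\big)$, which is exactly (\ref{conc}). I expect the skew-adjointness (hence generation) and the final assembly to be routine given Lemmas \ref{pxi}--\ref{skeww}; the substantive work is the hidden-regularity trace estimate for $\dot w_x(L,t)$ that certifies admissibility of the derivative-of-Dirac control.
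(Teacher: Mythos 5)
Your skeleton --- skew-adjointness of $\mc A$ (Stone / L\"umer--Phillips), admissibility of $B$, then variation of parameters --- is exactly the skeleton of the paper's proof, which consists of invoking the generation argument of the fully dynamic case and simply \emph{asserting} that $B$ is admissible. Two of your observations are correct and genuinely useful: the generation step does go through as in Lemma \ref{skeww}, and $(P_\xi\delta_L)_x\in L^2(0,L)$ because $P_\xi$ maps $(H^1_L(0,L))'$ boundedly into $H^1$, so that piece of $B$ is a bounded control operator and the whole problem is concentrated in the dipole term $B_3(\delta_L)_x$, whose dual observation is the angular-velocity trace $\dot w_x(L,t)$. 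You are right that this is the crux.

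The gap is that the hidden-regularity estimate you rely on is \emph{false} in the energy space, and the multiplier you propose cannot produce it. Since $J=\xi P_\xi-I$ and $D_xP_\xi D_x$ is bounded on $L^2(0,L)$, the generator in (\ref{A-MM}) is a bounded perturbation of the clamped--free Euler--Bernoulli beam, so its high-frequency behavior is that of the beam. For the beam, a Fourier/Ingham computation (e.g.\ hinged beam on $(0,\pi)$, $w=\sum_n a_ne^{in^2t}\sin nx$) gives $\int_0^T|\dot w_x(\pi,t)|^2dt\asymp\sum n^6|a_n|^2$, whereas $\|\varphi^0\|^2_{\mc H}\asymp\sum n^4|a_n|^2$; thus $\dot w_x(L)\in L^2(0,T)$ requires data in $H^3\times H^1$, one full derivative above $\mc H=H^2_L\times\Ltwo$. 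Moment actuation paired with angular-velocity sensing is the classical example of a collocated but non-admissible (non-well-posed) pair for an Euler--Bernoulli beam on $H^2\times L^2$; it is admissible only one space lower, on $H^1\times H^{-1}$. Moreover, even formally, the multiplier $q(x)\bar w_x$ does not isolate the trace you need: integrating $m\ddot w\,q\bar w_x$ and $\tilde A w_{xxxx}\,q\bar w_x$ by parts yields the boundary terms $|\dot w(L)|^2$ and $|w_{xx}(L)|^2$ (precisely the traces that \emph{are} hidden-regular for energy solutions), never $|\dot w_x(L)|^2$; to generate that trace one needs a multiplier of the type $x\bar w_{xxx}$ (as in Lemma \ref{xyz}), whose interior terms are exactly the $H^3\times H^1$ norms, confirming the loss of one derivative. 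So the admissibility step of your argument collapses, and with it the estimate (\ref{conc}) for arbitrary $V\in\Ltwo(0,T)$. This does not affect the closed-loop results later in the paper (dissipativity of the feedback-modified operator in Theorem \ref{MM-mult} holds regardless of open-loop admissibility), but it means your proposal does not prove the theorem; your reduction in fact makes visible that the statement itself is in doubt unless $B_3=0$ or the state space is weakened, a point the paper's one-line proof passes over silently.
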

\begin{proof}
    The proof follows the steps of Theorem \ref{w-pff}.
\end{proof}


\section{Stabilization results}
\label{stab}
 In this paper, the top and bottom layers are made of different materials, one is elastic and another one is piezoelectric, in contrast to model in \cite{Ozkan3}. This causes different speeds of wave propagation at the top and bottom layers.  In fact, if both the top and the bottom layers of the composite are piezoelectric, it is shown in \cite{Ozkan3} that the fully dynamic R-N model (\ref{Semigroup-non})   with four $B^*-$type  feedback controllers lacks of asymptotic stability for many choices of material parameters of the piezoelectric layers. These solutions are corresponding to the ``bending-free" or ``inertial sliding" motions.  In contrast to this result, inertial-sliding solutions are asymptotically stable. We also show that the electrostatic R-N model is shown to be exponentially stable with only three   feedback controllers  in comparison to the four feedback controllers in \cite{Ozkan3}. We improve that result.

The fully dynamic M-M model  has unstable solutions with the $B^*-$feedback. This is in line with the result in \cite{Ozkan3} that the  material parameters are sensitive to the stabilization of the system with only one feedback controller. The electrostatic M-M model is asymptotically  stable with only one controller.  We also mention an exponential stability result for the electrostatic M-M beam model at the end without a proof since the proof is too long and it is beyond the scope of this paper.
\subsection{The fully dynamic Rao-Nakra (R-N) model}

Let   $k_1,k_2,k_3,k_4\in\mathbb{R}^+,$  and choose the state feedback controllers in the fully-dynamic model (\ref{Semigroup}) as the following
\begin{eqnarray}\label{feedback-f}
{\bf F}(t)=\left(
                                                                                                               \begin{array}{c}
                                                                                                                 g^1(t) \\
                                                                                                                 V(t) \\
                                                                                                                 M(t) \\
                                                                                                                 g(t) \\
                                                                                                               \end{array}
                                                                                                             \right)=KB^*\varphi=\left(
                                                                                                               \begin{array}{c}
                                                                                                                 -k_1\dot v^1(L) \\
                                                                                                                 k_2 \dot p(L) \\
                                                                                                                 k_3\dot w_x(L) \\
                                                                                                                 -k_4 \dot w(L) \\
                                                                                                               \end{array}
                                                                                                             \right)
\end{eqnarray}
where  $K={\rm diag}(-k_1,k_2,k_3,-k_4),$ and $\dot v^1(L), \dot p(L), \dot w(L), \dot w_x (L)$ are the velocity of the elastic layer, total induced current accumulated at the electrodes of the  piezoelectric layer, angular velocity and  velocity of the bending of the composite at the tip $x=L,$ respectively.

Consider the eigenvalue problem $\mc A \varphi=\lambda\varphi$ for the inertial sliding solutions, i.e. $w=0:$
\begin{eqnarray}
 \label{inertial-sliding} &&\left\{
  \begin{array}{ll}
  \alpha^1 h_1 v^1_{xx}- \frac{G_2}{h_2} (-v^1+v^3) = -\tau^2 \rho_1 h_1 v^1,    & \\
 \alpha^3_1 h_3 v^3_{xx}-\gamma \beta h_3 p^i_{xx} +\frac{G_2}{h_2} (-v^1+v^3)= -\tau^2 \rho_3 h_3 v^3,    & \\
    \beta h_3  p_{xx} - \gamma\beta h_3 v^3_{xx}= -\tau^2 \mu  h_3  p,  &\\
   \frac{G_2H}{h_2}   (-v^1+v^3)_x=0,&
 \end{array} \right.
\end{eqnarray}
with the overdetermined boundary conditions
\begin{eqnarray}
 \nonumber   \left|w=w_x=v^i=p^i\right|_{x=0}= \left|v^i_x=p^i_x =p^i\right|_{x=L}=0,\\
  \label{d-son-eig-mag} w(L)=w_x(L)=w_{xx}(L)=w_{xxx}(L)=0, \quad i=1,3.
\end{eqnarray}
By using the boundary conditions $v^1(0)=v^3(0)=0,$ the last equation in (\ref{inertial-sliding}) implies that $v^1=v^3.$ Then (\ref{inertial-sliding}) reduces to
\begin{eqnarray}
 \label{inertial-sliding1} &&\left\{
  \begin{array}{ll}
  \alpha^1 h_1 v^1_{xx}= -\tau^2 \rho_1 h_1 v^1,    & \\
 \alpha^3_1 h_3 v^3_{xx}-\gamma \beta h_3 p^i_{xx} = -\tau^2 \rho_3 h_3 v^3,    & \\
    \beta h_3  p_{xx} - \gamma\beta h_3 v^3_{xx}= -\tau^2 \mu  h_3  p,  &
 \end{array} \right.
\end{eqnarray}
By the boundary conditions for $v^1,$ we obtain $v^1=v^3\equiv 0.$ Finally, by the boundary conditions for $p,$ the equation  $ \beta h_3  p_{xx} = -\tau^2 \mu  h_3  p$ has only the solution $p\equiv 0.$ We have the following immediate result:

\begin{theorem}\label{lack1}
The  inertial sliding solutions of the fully dynamic model (\ref{Semigroup}) is  strongly stable by the  feedback (\ref{feedback-f}).
\end{theorem}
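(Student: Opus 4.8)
The plan is to read the theorem as the statement that the closed-loop generator carries no \emph{undamped} inertial-sliding mode, and to promote this to strong decay through the classical spectral form of LaSalle's invariance principle. The eigen-computation preceding the statement already supplies the decisive input, so the proof only has to erect the semigroup framework around it.

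First I would fold the feedback (\ref{feedback-f}) into the boundary conditions (\ref{d-son}) and treat the resulting closed-loop generator $\mc A_c$ as a maximal dissipative operator on $\mc H$. Integrating by parts in $\langle\mc A_c\varphi,\varphi\rangle_{\mc H}$ as in the energy identity and reading $B^{*}$ off (\ref{Semigroup}), one obtains
\begin{eqnarray}
\nonumber {\rm Re}\,\langle\mc A_c\varphi,\varphi\rangle_{\mc H}&=&-k_1|\dot v^1(L)|^2-k_2|\dot p(L)|^2\\
\nonumber &&-\,k_3|\dot w_x(L)|^2-k_4|\dot w(L)|^2\le 0,
\end{eqnarray}
so that $\mc A_c$ generates a $C_0$-semigroup of contractions by the L\"umer--Phillips theorem, the range condition being solvable by Lax--Milgram exactly as in Lemma \ref{skeww}. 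Since ${\rm Dom}(\mc A_c)$ embeds compactly into $\mc H$ by Rellich--Kondrachov, $\mc A_c$ has compact resolvent; its spectrum is therefore a discrete set of finite-multiplicity eigenvalues confined to $\{{\rm Re}\,\lambda\le 0\}$.

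Next I would invoke the criterion that such a semigroup is strongly stable on an invariant subspace precisely when $\mc A_c$ has no eigenvalue on $i\mathbb R$ there: precompactness of graph-bounded orbits lets LaSalle's principle confine every $\omega$-limit set to the largest invariant subset of $\{{\rm Re}\,\langle\mc A_c\varphi,\varphi\rangle=0\}$. It thus remains to show that $\mc A_c\varphi=i\tau\varphi$ with $w\equiv 0$ forces $\varphi=0$. Vanishing of the dissipation makes the two active tip outputs $\dot v^1(L)$ and $\dot p(L)$ vanish; since for an eigenmode the velocity components are $i\tau$ times the displacements, this supplements the natural conditions $v^1_x(L)=p_x(L)=0$ with the Dirichlet data $v^1(L)=p(L)=0$. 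With $v^1=v^3$ from the shear-free constraint, the first line of (\ref{inertial-sliding1}) becomes the Helmholtz problem $\alpha^1 v^1_{xx}=-\tau^2\rho_1 v^1$ carrying the three conditions $v^1(0)=v^1(L)=v^1_x(L)=0$, hence $v^1=v^3\equiv 0$; the charge line then collapses to $\beta h_3 p_{xx}=-\tau^2\mu h_3 p$ with $p(0)=p(L)=p_x(L)=0$, hence $p\equiv 0$. The static case $\tau=0$ is dispatched the same way, so no inertial-sliding eigenvalue touches $i\mathbb R$.

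The genuinely delicate point is this over-determination, and it is exactly where the heterogeneity of the composite is spent. Because the outer layer \textcircled{1} is \emph{elastic}, its collocated $B^{*}$-output is the tip velocity $\dot v^1(L)$, so on the imaginary axis it pins $v^1(L)=0$ and renders the Helmholtz problem over-determined; with only the natural Neumann condition $v^1_x(L)=0$ the problem would keep its standing waves. Had both outer layers instead been identical piezoelectric beams, as in \cite{Ozkan3}, the longitudinal feedbacks would act through the currents $\dot p^{1}(L),\dot p^{3}(L)$ and enforce $p^{1}(L)=p^{3}(L)=0$ without pinning any longitudinal displacement, so genuine undamped inertial-sliding modes would survive and asymptotic stability would fail. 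It is precisely this contrast --- physically, the distinct wave speeds $\alpha^1/\rho_1\ne\alpha_1^3/\rho_3$ of the elastic and piezoelectric layers --- that I would present as the conceptual core delivering the strong stability claimed here.
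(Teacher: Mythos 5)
Your proposal is correct and follows essentially the same route as the paper: restrict to the inertial-sliding eigenvalue problem, use $\phi^2_x=0$ with the clamped conditions to get $v^1=v^3$, exploit the vanishing dissipation to overdetermine the decoupled Helmholtz problems for $v^1$ and $p$ so that only the trivial solution survives, and conclude strong stability via LaSalle's invariance principle within the standard contraction-semigroup, compact-resolvent framework. If anything, you are more explicit than the paper on the decisive point that the extra Dirichlet data $v^1(L)=p(L)=0$ come from the vanishing of the feedback dissipation $-k_1|\dot v^1(L)|^2-k_2|\dot p(L)|^2$, which the paper leaves implicit in its statement of the overdetermined boundary conditions (\ref{d-son-eig-mag}).
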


\begin{proof} It can be shown in (\ref{inertial-sliding}) that  $0\in \sigma(\mc A)$ since $\lambda=0$ corresponds to the trivial solution. Therefore, there are only isolated eigenvalues. There are also no eigenvalues on the imaginary axis, or in other words, the set
\begin{eqnarray} \label{set2}\left\{z\in \mc H: {\rm Re} \left<\mc A z, z\right>_{\mc H}= -k_1 |\dot v(L)|^2-k_2 |\dot p(L)|^2=0\right\}
\end{eqnarray}
has only the trivial solution by the argument (\ref{inertial-sliding}-(\ref{inertial-sliding1}). Therefore, by La Salle's invariance principle,
the system is asymptotically stable.
\end{proof}

It is important to note that the asymptotic stability of other solutions is still an open problem due to the strong coupling between bending, stretching, and charge equations.

\subsection{Electrostatic Rao-Nakra (R-N) model}
 Let   $k_1,k_2,k_3,k_4\in\mathbb{R}^+,$  and choose the state feedback controllers in the model with no magnetic effects (\ref{d4-non}) as the following
\begin{eqnarray}\label{feedback}
{\bf F}(t)=\left(
                                                                                                               \begin{array}{c}
                                                                                                                 g^1(t) \\
                                                                                                                 V(t) \\
                                                                                                                 M(t) \\
                                                                                                                 g(t) \\
                                                                                                               \end{array}
                                                                                                             \right)=KB^*\varphi=\left(
                                                                                                               \begin{array}{c}
                                                                                                                 -k_1\dot v^1(L) \\
                                                                                                                 -k_2 \dot v^3(L) \\
                                                                                                                 k_3\dot w_x(L) \\
                                                                                                                 -k_4 \dot w(L) \\
                                                                                                               \end{array}
                                                                                                             \right)
\end{eqnarray}
where  $K={\rm diag}(-k_1,-k_2,k_3,-k_4),$ and $\dot v^3(L)$ is the velocity of the  piezoelectric layer, at the tip $x=L.$

The model without magnetic effects is exponentially stable with the feedback (\ref{feedback}).  We recall the following theorem:
\begin{theorem} \cite[Theorem 4.3]{Ozkan3}
   Let the feedback (\ref{feedback}) be chosen. The solutions $\varphi(t)=e^{(\mc A+ KBB^*)t}\varphi_0$  for $t\in \mathbb{R}^+$ of the closed-loop system
   \begin{eqnarray}
\label{Semi-feed}\left\{\begin{array}{ll}
\dot \varphi =  \mc A \varphi +KB B^*\varphi , \quad \varphi(x,0) =  \varphi ^0.\end{array}\right.
\end{eqnarray}
     is exponentially stable in $\mc H.$
\end{theorem}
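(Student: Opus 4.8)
The plan is to combine the dissipativity of the closed-loop generator with a frequency-domain characterization of exponential stability, reducing everything to a uniform resolvent bound which is then obtained by multiplier estimates. Write $\mc A_K:=\mc A+KBB^*$ for the closed-loop operator in (\ref{Semi-feed}). First I would verify that $\mc A_K$ generates a $C_0$-semigroup of contractions on $\mc H$. Differentiating the energy along trajectories and integrating by parts, the collocated feedback (\ref{feedback}) yields
\begin{eqnarray}
\nonumber \frac{d}{dt}\,\mathrm{E}(t)={\rm Re}\left<\mc A_K\varphi,\varphi\right>_{\mc H}=-k_1|\dot v^1(L)|^2-k_2|\dot v^3(L)|^2-k_3|\dot w_x(L)|^2-k_4|\dot w(L)|^2\le 0,
\end{eqnarray}
so $\mc A_K$ is dissipative; maximality ($\mathrm{Range}(I-\mc A_K)=\mc H$) follows from a Lax--Milgram argument on the resolvent equation, exactly as in the well-posedness analysis of (\ref{Semigroup-non}). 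By the L\"umer--Phillips theorem, $\mc A_K$ generates a contraction semigroup, and since $\mathrm V\hookrightarrow\mathrm H$ compactly, $\mc A_K$ has compact resolvent.

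Next I would invoke the Gearhart--Pr\"uss--Huang theorem: a contraction semigroup is exponentially stable precisely when (a) $i\mathbb{R}\subset\rho(\mc A_K)$ and (b) $\displaystyle\sup_{\beta\in\mathbb{R}}\bigl\|(i\beta I-\mc A_K)^{-1}\bigr\|_{\mathcal{L}(\mc H)}<\infty$. For (a), suppose $i\beta$ were an eigenvalue with eigenvector $\varphi$ and $\beta\neq0$. The dissipation identity above forces $\dot v^1(L)=\dot v^3(L)=\dot w_x(L)=\dot w(L)=0$, hence the traces $v^1(L)=v^3(L)=w(L)=w_x(L)=0$. The eigenfunction then solves the coupled stationary system underlying (\ref{d4-non}) with \emph{overdetermined} homogeneous data at both $x=0$ and $x=L$; an ODE/unique-continuation argument, in the same spirit as the inertial-sliding computation (\ref{inertial-sliding})--(\ref{inertial-sliding1}), forces $\varphi\equiv0$. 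The value $\beta=0$ lies in $\rho(\mc A_K)$ by coercivity of the static form.

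The substance of the proof is the uniform bound (b), which I would establish by contradiction. If it fails there are sequences $\beta_n\in\mathbb{R}$ and $\varphi_n\in{\rm Dom}(\mc A_K)$ with $\|\varphi_n\|_{\mc H}=1$ and $(i\beta_n I-\mc A_K)\varphi_n=f_n\to0$ in $\mc H$; the boundary dissipation gives $\dot v^1_n(L),\dot v^3_n(L),\dot w_{x,n}(L),\dot w_n(L)\to0$. The bounded-$\beta_n$ case is excluded by (a) together with the compactness of the resolvent, so I may assume $|\beta_n|\to\infty$. Here I would run the multiplier method of \cite{O-Hansen4}: testing the longitudinal equations against weighted multipliers $q(x)(v^j_n)_x$ and the bending equation against $q(x)(w_n)_x$, then absorbing the interior terms via the resolvent equation $(i\beta_n I-\mc A_K)\varphi_n=f_n$, yields quantitative control of each field's energy by the (vanishing) boundary traces, whence $\|\varphi_n\|_{\mc H}\to0$, a contradiction.

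\textbf{The main obstacle} is that the shear coupling $G_2\phi^2=\frac{G_2}{h_2}(-v^1+v^3+Hw_x)$ links all three mechanical fields, while the elastic and piezoelectric outer layers propagate at \emph{different} speeds $\sqrt{\alpha^1/\rho_1}\neq\sqrt{\alpha^3_1/\rho_3}$, so no single multiplier simultaneously balances both longitudinal energies; the higher-order, frequency-dependent spectral multipliers of \cite{O-Hansen4} are precisely what resolve this mismatch. An equivalent and cleaner route, the one taken in \cite{Ozkan3}, is to write $\mc A_K=\mc A_0+\mc C$ with $\mc A_0$ the decoupled operator of three independently boundary-damped beam/wave equations---each exponentially stable by the classical estimates of \cite{O-Hansen4,Wang}---and $\mc C$ the lower-order coupling through $\phi^2$. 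Verifying that $\mc C(\lambda I-\mc A_0)^{-1}$ is compact and combining this with the absence of imaginary spectrum from step (a), the compact-perturbation theorem of \cite{Trigg} then upgrades stability of $\mc A_0$ to exponential stability of $\mc A_K$.
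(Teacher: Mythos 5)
Your proposal is essentially correct, but be aware of how it sits relative to the paper: the paper does not prove this theorem at all --- it is recalled verbatim from \cite[Theorem 4.3]{Ozkan3}, whose proof (as the paper itself describes) is the compact perturbation argument of \cite{Trigg} combined with spectral multipliers in the spirit of \cite{O-Hansen4}. So the ``equivalent and cleaner route'' in your final paragraph \emph{is} the cited proof: decompose $\mc A+KBB^*=\mc A_0+\mc C$, note that $\mc C$ (the coupling through $\phi^2=\frac{1}{h_2}(-v^1+v^3+Hw_x)$, a zero-order term relative to the elliptic parts) is compact on $\mc H$, use invariance of the essential growth rate under compact perturbation, and conclude once imaginary-axis eigenvalues are excluded. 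Your primary route via Gearhart--Pr\"uss is genuinely different and also viable here, precisely because the coupling is lower order: in the contradiction argument one has $\|v^j_n\|_{L^2}\lesssim |\beta_n|^{-1}\to 0$ and $\|w_n\|_{H^1}\lesssim|\beta_n|^{-1}\to 0$, so all $\phi^2$-coupling terms in the multiplier identities vanish in the high-frequency limit. This also means the ``main obstacle'' you identify --- the mismatch of wave speeds $\sqrt{\alpha^1/\rho_1}\neq\sqrt{\alpha^3_1/\rho_3}$ --- is not actually an obstruction for the electrostatic model, since each longitudinal equation is tested with its own multiplier and the equations only talk to each other through the vanishing zero-order term; distinct speeds are what make the \emph{fully dynamic} model (where the charge equation couples at principal order) behave differently. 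What the compact-perturbation route buys is that the high-frequency analysis comes for free from known exponential stability of the decoupled boundary-damped wave equations and Rayleigh beam (for the latter the correct reference is \cite{B-Rao}, rather than \cite{O-Hansen4,Wang}); what Gearhart--Pr\"uss buys is self-containedness, at the price of doing those estimates by hand.

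The one soft spot in your write-up is the step both routes hinge on: showing the overdetermined eigenvalue problem has only the trivial solution. Your justification --- ``an ODE/unique-continuation argument, in the same spirit as the inertial-sliding computation (\ref{inertial-sliding})--(\ref{inertial-sliding1})'' --- undersells this. That computation in the paper covers only the bending-free (inertial-sliding) subclass $w\equiv 0$, where the system decouples trivially; the eigenvalue exclusion needed for exponential stability must handle the fully coupled system with $w\not\equiv 0$, and there it is a genuine multiplier lemma, namely Lemma 4.2 of \cite{Ozkan3} (which uses the extra trace $w(L)=0$ supplied by the fourth controller $g(t)=-k_4\dot w(L)$), sharpened in this paper to Lemma \ref{xyz} so as to dispense with that trace and permit only three controllers in Theorem \ref{RN-elec}. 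As a proof that leans on the literature your proposal is fine; as a self-contained argument, you would need to supply that multiplier computation rather than appeal to the inertial-sliding case.
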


 In this paper, we use modified multipliers  to reduce the number of controllers to three, i.e. the controller $g(t)= -k_4 \dot w(L)$ may be removed. To achieve this, the following result plays a key role in order to show that there are no eigenvalues on the imaginary axis. Note that the analogous result in \cite{Ozkan3} requires $u(L)=0.$

\begin{lemma} \label{xyz} Let $\lambda =i\mu.$ The eigenvalue problem
\begin{eqnarray}
 \label{dbas-eig} &&\left\{
  \begin{array}{ll}
 \alpha^1_1 h_1 z^1_{xx} -  G_2 \phi^2 = \lambda^2 \rho_1 h_1 z^1,   & \\
  \alpha^3_1 h_3 z^3_{xx} + G_2 \phi^2 = \lambda^2 \rho_3 h_3 z^3,  & \\
    - K_2 u_{xxxx} + G_2 H \phi^2_x=\lambda^2 (m u  - K_1 u_{xx}),&\\
 \phi^2=\frac{1}{h_2}\left(-z^1+z^3 + H u_x\right)&
 \end{array} \right.
\end{eqnarray}
with the overdetermined boundary conditions
\begin{eqnarray}
\label{d-son-eig} \left.\begin{array}{ll}
  u(0)=u_x(0)=u_x(L)=u_{xx}(L)=u_{xxx}(L)=0,&\\
  z^i(0)=z^i(L)=z^i_x (L)=0, \quad i=1,3,
\end{array}\right.&&
   \end{eqnarray}
has only the trivial solution.
\end{lemma}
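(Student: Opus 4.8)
The plan is to exploit that the boundary data (\ref{d-son-eig}) are heavily \emph{overdetermined}. The three equations of (\ref{dbas-eig}) constitute an eighth--order \emph{constant--coefficient} ODE system in $x$ for $(z^1,z^3,u)$, whereas (\ref{d-son-eig}) already prescribes seven traces at $x=L$, namely $z^1(L)=z^1_x(L)=z^3(L)=z^3_x(L)=u_x(L)=u_{xx}(L)=u_{xxx}(L)=0$. Thus the only Cauchy datum at $x=L$ not assumed to vanish is $u(L)$: if I can force $u(L)=0$, then the entire Cauchy data vanish at $x=L$ and constant--coefficient uniqueness yields $(z^1,z^3,u)\equiv 0$. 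This is precisely the gap with \cite{Ozkan3}, where the fourth controller supplies $u(L)=0$ outright; here it must be recovered.

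First I would record the self--adjoint energy balance. Pairing the equations of (\ref{dbas-eig}) with $\bar z^1,\bar z^3,\bar u$ in $L^2(0,L)$, integrating by parts, and using (\ref{d-son-eig}) to annihilate every boundary contribution (all traces at $x=L$ vanish; the clamping kills those at $x=0$), the shear cross--terms recombine through $h_2\phi^2=-z^1+z^3+Hu_x$ into $G_2h_2\|\phi^2\|^2$, giving (with $\|\cdot\|=\|\cdot\|_{L^2(0,L)}$)
\[ \alpha^1_1 h_1\|z^1_x\|^2 + \alpha^3_1 h_3\|z^3_x\|^2 + K_2\|u_{xx}\|^2 + G_2 h_2\|\phi^2\|^2 = \mu^2\left(\rho_1 h_1\|z^1\|^2 + \rho_3 h_3\|z^3\|^2 + m\|u\|^2 + K_1\|u_x\|^2\right). \]
Equivalently this is $\|(z^1,z^3,u)\|_{\mathrm V}^2=\mu^2\|(z^1,z^3,u)\|_{\mathrm H}^2$, i.e.\ potential energy $=$ kinetic energy. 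By itself it is consistent with a genuine eigenfunction and will serve only to cancel lower--order volume integrals in the next step.

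Next I would generate an independent identity using Rellich--Pohozaev (spectral) multipliers: multiply the first two equations of (\ref{dbas-eig}) by $x\bar z^1_x,\,x\bar z^3_x$ and the bending equation by $x\bar u_x$, take real parts and integrate. Using $2\,\mathrm{Re}\int_0^L x\,\zeta\,\bar\zeta_x\,dx=L|\zeta(L)|^2-\|\zeta\|^2$ together with the biharmonic analogue $\mathrm{Re}\int_0^L x\,u_{xxxx}\,\bar u_x\,dx=\tfrac32\|u_{xx}\|^2$ (every boundary term again dropping out by (\ref{d-son-eig})), the single surviving boundary contribution is $\mu^2 m L\,|u(L)|^2$. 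Subtracting a suitable multiple of the energy identity then removes the lower--order integrals and leaves an identity of the form $c_1\,\mathcal P + c_2\,\mu^2 m L\,|u(L)|^2 = 0$ with $c_1,c_2>0$ and $\mathcal P\ge 0$ the potential energy. Hence $\mathcal P=0$ and $u(L)=0$, so $\|u_{xx}\|=0$; the clamped data $u(0)=u_x(0)=0$ give $u\equiv 0$, and then $\phi^2$ together with the energy identity force $z^1\equiv z^3\equiv 0$.

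The hard part is the shear coupling $\phi^2$. The plain multiplier $x\partial_x$ does \emph{not} make the coupling terms collapse: one is left with indefinite remainders of the type $\mathrm{Re}\int_0^L x\,\phi^2\,\bar u_{xx}\,dx$ and $\mathrm{Re}\int_0^L \phi^2\,\bar u_x\,dx$, which are neither boundary terms nor sign--definite. This is exactly where the \emph{higher--order} multipliers in the spirit of \cite{B-Rao} are needed: I would add a multiple of the multiplier $x\phi^2_x$ (equivalently, differentiate the relation $h_2\phi^2=-z^1+z^3+Hu_x$ and feed it back) so that these remainders telescope against $G_2 h_2\|\phi^2\|^2$ and the $K_2\|u_{xx}\|^2$ produced by the biharmonic term. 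Arranging this bookkeeping so that the coefficient of $|u(L)|^2$ stays sign--definite \emph{uniformly in} $\mu$ is the crux of the argument; once it is carried out, the overdetermination forces the trivial solution.
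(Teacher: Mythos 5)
Your proposal is a plan, not a proof, and the missing part is precisely the mathematical content of the lemma. In your third paragraph you assert that the first-order multipliers $x\bar z^1_x,\,x\bar z^3_x,\,x\bar u_x$, after subtracting a multiple of the energy identity, leave an identity of the form $c_1\,\mathcal P + c_2\,\mu^2 m L\,|u(L)|^2 = 0$ with $c_1,c_2>0$; but this is never derived, and your own fourth paragraph retracts it: carrying out the integrations by parts one finds the indefinite remainders $-\tfrac{G_2h_2}{2}\|\phi^2\|^2 + G_2H\,\mathrm{Re}\int_0^L \phi^2\,\bar u_x\,dx$ and $-\tfrac{\mu^2 K_1}{2}\|u_x\|^2$, which neither telescope into $\mathcal P$ nor have a usable sign, so the claimed inequality structure never materializes. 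The proposed repair --- ``add a multiple of the multiplier $x\phi^2_x$ so that these remainders telescope \ldots\ arranging this bookkeeping \ldots\ is the crux'' --- is exactly the step you do not carry out, and it is the whole proof; without an explicit multiplier combination and a verification that every indefinite term cancels, nothing has been shown.

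For comparison, the paper's proof does supply that bookkeeping, and its structure is different from the one you aim at: it multiplies the equations by the \emph{third-order} quantities $x\bar z^1_{xxx},\,x\bar z^3_{xxx},\,x\bar u_{xxx}$, adds the same identity written for the conjugate eigenvalue problem (legitimate because $\lambda=i\mu$ makes $\lambda^2$ real) to purge the non-symmetric terms, and then adds a second identity generated by the multipliers $3\bar z^1_{xx},\,3\bar z^3_{xx},\,3\bar u_{xx}$. In this combination the shear terms appear as $+3G_2H\|\phi^2_x\|^2$ in the first identity and $-3G_2H\|\phi^2_x\|^2$ in the second, so they cancel exactly, along with all kinetic terms, leaving
\begin{equation*}
\int_0^L\left\{\alpha^1_1 h_1|z^1_{xx}|^2+\alpha^3_1 h_3|z^3_{xx}|^2+K_2|u_{xxx}|^2\right\}dx=0,
\end{equation*}
an identity containing \emph{no} boundary term whatsoever; triviality then follows from the clamped and overdetermined conditions applied to $z^1_{xx}=z^3_{xx}=u_{xxx}\equiv 0$. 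In particular the paper never recovers $u(L)=0$ and never invokes Cauchy uniqueness at $x=L$ --- the route your plan depends on --- which is consistent with the paper's remark that avoiding any hypothesis on $u(L)$ is the very point of this lemma relative to \cite{Ozkan3}.
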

\begin{proof} Now multiply the equations in (\ref{dbas-eig}) by  $x \bar z^1_{xxx},$  $ x\bar z^3_{xxx},$ and $x\bar u_{xxx},$ respectively, use the boundary conditions (\ref{d-son-eig}), integrate by parts on $(0,L),$ and add them up:
\begin{eqnarray}
 \label{CH3-mult1-20}  \left.
\begin{array}{ll}
 0=\int_0^L\left\{ -\alpha^1_1 h_1|z^1_{xx}|^2 -\alpha^3_1 h_3 |z^3_{xx}|^2   -K_2 |u_{xxx}|^2 -3\rho_1 h_1 \lambda^2|z^1_{x}|^2  \right.& \\
  -3\rho_3 h_3  \lambda^2 |z^3|^2 +3m \lambda^2 |u_x|^2 + K_1 \lambda^2|u_{xx}|^2 + 3G_2  H |\phi^2_x|^2 &\\
-G_2H {\bar \phi^2} x \phi^2_{xxx} -K_2 \bar u_{xxxx} (x u_{xxx})  - G_2 H \phi^2 {\bar u}_{xxx}-\alpha^3_1 h_3 \bar z^3_{xx}(x z^3_{xxx})  &\\
 - \alpha^1_1 h_1 \bar z^1_{xx} (x z^1_{xxx})-\rho_1h_1 \lambda^2\bar z^1 (xz^1_{xxx})  -\rho_1h_1 \lambda^2\bar z^1 (xz^1_{xxx})    &\\
\left.   -\rho_3h_3 \lambda^2 \bar z^3 (x z^3_{xxx})  +\lambda^2 (m\bar u-K_1 \bar u_{xx})(x u_{xxx})\right\}~dx&\\
 \end{array} \right.
\end{eqnarray}
 where we use the boundary conditions for $z^1_{xx}(L)=z^3_{xx}(L)=0$ via the differential equations (\ref{dbas-eig}) since $\phi^2(L)=z^1(L)=z^3(L)=0.$

Now consider the conjugate eigenvalue problem corresponding to (\ref{dbas-eig})-(\ref{d-son-eig}):
\begin{eqnarray}
 \label{dbas-eig-con} &&\left\{
  \begin{array}{ll}
 \alpha^1_1 h_1 {\bar z}^1_{xx} -  G_2 {\bar \phi}^2 = {\bar \lambda}^2 \rho_1 h_1 {\bar z}^1,   & \\
  \alpha^3_1 h_3 {\bar z}^3_{xx} + G_2 {\bar \phi}^2 = {\bar \lambda}^2 \rho_3 h_3 {\bar z}^3,  & \\
    - K_2 {\bar u}_{xxxx} + G_2 H {\bar \phi}^2_x={\bar \lambda}^2 (m {\bar u}  - K_1 {\bar u}_{xx}),&\\
 {\bar \phi}^2=\frac{1}{h_2}\left(-{\bar z}^1+{\bar z}^3 + H {\bar u}_x\right)&
 \end{array} \right.
\end{eqnarray}
with overdetermined boundary conditions
\begin{eqnarray}
 \nonumber   &&  \bar u(0)=\bar u_x(0)=\bar u_x(L)=\bar u_{xx}(L)=\bar u_{xxx}(L)=0,\\
 \label{d-son-eig-con} && \bar z^i(0)=\bar z^i(L)=\bar z^i_x (L)=0, \quad i=1,3.
\end{eqnarray}
Now multiply the equations in (\ref{dbas-eig-con}) by  $x z^1_{xxx},$  $ x z^3_{xxx},$ and $x u_{xxx},$ respectively, integrate by parts on $(0,L),$ and add them up:
\begin{eqnarray}
\label{CH3-mult1-21}  \left.
\begin{array}{ll}
 0=\int_0^L\left\{ G_2 h_2 \bar \phi^2 (x\phi^2_{xxx})+G_2 h_2 H \phi^2 {\bar u}_{xxx} \right.   +K_2 \bar u_{xxxx} (x u_{xxx})  & \\
 +\alpha^1_1 h_1 \bar z^1_{xx} (x z^1_{xxx}) +\alpha^3_1 h_3 \bar z^3_{xx}(x z^3_{xxx}) +\rho_1h_1 {\bar \lambda^2}\bar z^1 (xz^1_{xxx})  &\\
\left.  +\rho_3h_3 {\bar \lambda}^2 \bar z^3 (x z^3_{xxx})  -{\bar \lambda}^2 (m\bar u-K_1 \bar u_{xx})(x u_{xxx})\right\}~dx.
 \end{array} \right.&&
\end{eqnarray}
 Since $\lambda = i\mu,$ adding (\ref{CH3-mult1-20}) and (\ref{CH3-mult1-21}) yields
 \begin{eqnarray}
\label{CH3-mult1-20-a}  \left.
\begin{array}{ll}
 0=\int_0^L\left\{ -\alpha^1_1 h_1|z^1_{xx}|^2 -\alpha^3_1 h_3 |z^3_{xx}|^2  -K_2 |u_{xxx}|^2 -3\rho_1 h_1 \lambda^2|z^1_{x}|^2  \right.& \\
 \left.-3\rho_3 h_3  \lambda^2 |z^3|^2 +3m \lambda^2 |u_x|^2+ K_1 \lambda^2|u_{xx}|^2  + 3G_2  H |\phi^2_x|^2 \right\} ~dx.
 \end{array} \right.&&
\end{eqnarray}
 Now multiply the equations in (\ref{dbas-eig}) by  $3 \bar z^1_{xx},$  $ 3\bar z^3_{xx},$ and $3\bar u_{xx},$ respectively, integrate by parts on $(0,L),$ and add them up:
   \begin{eqnarray}
\label{CH3-mult1-22} \left.
\begin{array}{ll}
0=\int_0^L\left\{ 3\alpha^1_1 h_1|z^1_{xx}|^2 +3\alpha^3_1 h_3 |z^3_{xx}|^2 +3K_2 |u_{xxx}|^2-3G_2 H |\phi^2_x|^2  \right.  & \\
\left. +3\rho_1 h_1 \lambda^2|z^1_{x}|^2 +3\rho_3 h_3  \lambda^2 |z^3|^2-3m \lambda^2 |u_x|^2 -K_1 \lambda^2|u_{xx}|^2   \right\} dx.
 \end{array} \right.&&
\end{eqnarray}
 Finally, adding (\ref{CH3-mult1-20-a}) and (\ref{CH3-mult1-22}) yields
 \begin{eqnarray}
\label{CH3-mult1-200}   \int_0^L\left\{ \alpha^1_1 h_1|z^1_{xx}|^2 +\alpha^3_1 h_3 |z^3_{xx}|^2  +K_2 |u_{xxx}|^2  \right\} ~dx=0.
 \end{eqnarray}
 This implies that $z^1_{xx}=z^3_{xx}=u_{xxx}=0,$ and by using the overdetermined boundary conditions (\ref{d-son-eig}), we obtain that $z^1=z^3=u\equiv 0. $
 \end{proof}

 Let the controller $g(t)$ be removed in (\ref{feedback}). Now the number of feedback controllers is reduced to three:
 \begin{eqnarray}\label{feedback-red}
\left(
                                                                                                               \begin{array}{c}
                                                                                                                 g^1(t) \\
                                                                                                                 V(t) \\
                                                                                                                 M(t) \\
                                                                                                               \end{array}
                                                                                                             \right)=B^*\varphi=\left(
                                                                                                               \begin{array}{c}
                                                                                                                 -k_1\dot v^1(L) \\
                                                                                                                 k_2 \dot v^3(L) \\
                                                                                                                 -k_3\dot w_x(L) \\
                                                                                                               \end{array}
                                                                                                             \right).
\end{eqnarray}
 \begin{theorem}\label{RN-elec}
   Let the feedback (\ref{feedback-red}) be chosen. Then the solutions \\$\varphi(t)=e^{(\mc A+ KBB^*)t}\varphi_0$  for $t\in \mathbb{R}^+$ of the closed-loop system (\ref{d4-non})-(\ref{divp-non})
     is exponentially stable in $\mc H.$
\end{theorem}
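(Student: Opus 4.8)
The plan is to apply the Gearhart--Pr\"uss--Huang frequency-domain characterization of exponential stability for contraction semigroups. Let $\mc A_c$ denote the closed-loop generator (the operator $\mc A + KBB^*$ in the statement). First I would check that $\mc A_c$ generates a $C_0$-semigroup of contractions on $\mc H$: a computation analogous to Lemma \ref{skeww}, exploiting the conservative structure of the free operator together with the feedback law (\ref{feedback-red}), gives the dissipation identity
\begin{eqnarray}
\nonumber {\rm Re}\left<\mc A_c \varphi, \varphi\right>_{\mc H} = -k_1 |\dot v^1(L)|^2 - k_2 |\dot v^3(L)|^2 - k_3 |\dot w_x(L)|^2 \le 0,
\end{eqnarray}
so $\mc A_c$ is dissipative and, by L\"umer--Phillips, generates a contraction semigroup. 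Since $\mathrm V \hookrightarrow \mathrm H$ compactly, $\mc A_c$ has compact resolvent and its spectrum is a discrete set of eigenvalues. It then remains to verify the two Gearhart--Pr\"uss conditions: (i) $i\mathbb{R} \subset \rho(\mc A_c)$ and (ii) $\sup_{\beta \in \mathbb{R}} \|(i\beta I - \mc A_c)^{-1}\|_{\mathcal{L}(\mc H)} < \infty$.

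For (i), the value $\lambda = 0$ is handled directly: $\mc A_c \varphi = 0$ forces the velocity components to vanish and reduces the problem to the static equilibrium, which has only the trivial solution under the clamped conditions at $x = 0$, so $0 \in \rho(\mc A_c)$. For a purely imaginary eigenvalue $\lambda = i\mu \neq 0$ with eigenfunction $\varphi$, the dissipation identity forces $\dot v^1(L) = \dot v^3(L) = \dot w_x(L) = 0$, whence $v^1(L) = v^3(L) = w_x(L) = 0$. Substituting these into the natural boundary conditions of (\ref{divp-non})---now with $g^1, V, M$ proportional to the vanishing observations and $g \equiv 0$---yields precisely the overdetermined boundary conditions (\ref{d-son-eig}). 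Writing $z^1 = v^1$, $z^3 = v^3$, $u = w$, the spatial system is exactly (\ref{dbas-eig}), so Lemma \ref{xyz} gives $\varphi \equiv 0$. Hence the imaginary axis carries no eigenvalues and, by compactness of the resolvent, $i\mathbb{R} \subset \rho(\mc A_c)$.

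The core of the argument is (ii), which I would prove by contradiction using the higher-order multipliers of Lemma \ref{xyz}. If (ii) fails, there are $\beta_n \in \mathbb{R}$ and $\varphi_n = [v^1_n, v^3_n, w_n, \dot v^1_n, \dot v^3_n, \dot w_n]^{\rm T} \in {\rm Dom}(\mc A_c)$ with $\|\varphi_n\|_{\mc H} = 1$ and $f_n := (i\beta_n I - \mc A_c)\varphi_n \to 0$ in $\mc H$. Taking the real part of $\left<f_n, \varphi_n\right>_{\mc H}$ and invoking the dissipation identity shows that the boundary observations vanish in the limit,
\begin{eqnarray}
\nonumber k_1 |\dot v^1_n(L)|^2 + k_2 |\dot v^3_n(L)|^2 + k_3 |\dot w_{n,x}(L)|^2 \longrightarrow 0.
\end{eqnarray}
The kinematic relations $i\beta_n v^i_n - \dot v^i_n = (f_n)_i$ recast the dynamic equations as a forced copy of (\ref{dbas-eig}) with $\lambda = i\beta_n$ and right-hand side controlled by $\|f_n\|_{\mc H}$. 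I would then replay the three multiplier identities of Lemma \ref{xyz}---multiplication by $x \bar z^1_{n,xxx}, x\bar z^3_{n,xxx}, x\bar u_{n,xxx}$, their conjugate counterparts, and multiplication by $3\bar z^1_{n,xx}, 3\bar z^3_{n,xx}, 3\bar u_{n,xx}$---now carrying forcing and boundary remainders that tend to zero. Their sum reproduces the exact cancellation of all $\beta_n^2$-weighted terms seen between (\ref{CH3-mult1-20-a}) and (\ref{CH3-mult1-22}), leaving
\begin{eqnarray}
\nonumber \int_0^L \left(\alpha^1_1 h_1 |v^1_{n,xx}|^2 + \alpha^3_1 h_3 |v^3_{n,xx}|^2 + K_2 |w_{n,xxx}|^2\right) dx \longrightarrow 0.
\end{eqnarray}
Combined with the clamped conditions at $x=0$ and the vanishing traces at $x=L$, Poincar\'e-type inequalities then drive the lower-order norms and the velocity components to zero, contradicting $\|\varphi_n\|_{\mc H} = 1$.

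The hardest part will be the bookkeeping in step (ii): in contrast to the clean eigenvalue computation of Lemma \ref{xyz}, the forced identities spawn boundary remainder terms (traces of $v^i_n, w_n$ and their derivatives at $x = L$) and interior cross terms weighted by $\beta_n$, which could a priori dominate. The decisive point is to organize the multiplier combination so that every $\beta_n$-weighted contribution either cancels in the sum---exactly as the $\lambda^2$-terms cancel in passing from (\ref{CH3-mult1-20-a}) and (\ref{CH3-mult1-22}) to (\ref{CH3-mult1-200})---or is absorbed by the vanishing boundary observations and by $\|f_n\|_{\mc H} \to 0$. Verifying that these remainders are genuinely lower-order, so that the positive top-order integral dominates the full $\mc H$-norm, is the crux; once this observability-type estimate $\|\varphi_n\|_{\mc H}^2 \le C\left((\text{boundary terms}) + \|f_n\|_{\mc H}^2\right)$ is secured, the Gearhart--Pr\"uss criterion delivers exponential stability.
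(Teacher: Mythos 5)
Your step (i) is exactly the paper's use of Lemma \ref{xyz}: on the imaginary axis the dissipation forces the three observations to vanish, the feedback boundary conditions collapse to the overdetermined set (\ref{d-son-eig}) (note that for an eigenfunction $\ddot w_x(L)=\lambda^2 w_x(L)=0$ and $\phi^2(L)=0$, which is what kills $w_{xxx}(L)$), and the lemma gives triviality. The divergence is in how exponential stability is then concluded. The paper never proves a uniform resolvent bound: it imports the compact perturbation argument of Theorem 4.3 of \cite{Ozkan3} (based on \cite{Trigg} and \cite{B-Rao}), under which the closed-loop semigroup has negative essential growth bound, so that exponential stability reduces to the absence of spectrum on the closed right half-plane --- i.e.\ precisely to dissipativity plus Lemma \ref{xyz}. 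In that framework the spectral multipliers are only ever applied to exact eigenfunctions, where the overdetermined boundary conditions hold identically and every boundary term produced by the integrations by parts vanishes.

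Your step (ii), by contrast, is where your proof actually lives, and it is not carried out: what is written is a plan whose crux you yourself flag. The difficulty is concrete, not bookkeeping. For an approximate-eigenfunction sequence the boundary conditions are no longer overdetermined: the natural condition of the removed controller reads $K_1\ddot w_{n,x}(L)-K_2 w_{n,xxx}(L)+G_2H\phi^2_n(L)=0$, so $K_2 w_{n,xxx}(L)\approx i\beta_n K_1\dot w_{n,x}(L)+G_2H\phi^2_n(L)$, while the dissipation gives only $\dot w_{n,x}(L)\to 0$, not $\beta_n\dot w_{n,x}(L)\to 0$. Since the $x$-weighted multipliers generate boundary terms proportional to $|w_{n,xxx}(L)|^2$ (e.g.\ from $\int_0^L x\, w_{n,xxxx}\bar w_{n,xxx}\,dx$), your identities acquire remainders of size $\beta_n^2|\dot w_{n,x}(L)|^2$ that the vanishing observations do not control; analogous $\beta_n$-weighted traces arise from the three feedback conditions. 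Absorbing these is the entire content of a frequency-domain proof, and nothing in Lemma \ref{xyz} --- whose cancellations rely on the boundary terms being exactly zero --- supplies it. So either complete that observability-type estimate, which would yield a self-contained proof genuinely different from the paper's (closer in spirit to the spectral analysis of \cite{Y-W}), or do what the paper does: invoke the compact perturbation machinery of \cite{Ozkan3}, for which Lemma \ref{xyz} alone suffices.
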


\begin{proof}To prove this, we replace Lemma 4.2  by Lemma \ref{xyz} in the  proof of Theorem 4.3 in \cite{Ozkan3}. The rest of the proof uses the compact perturbation argument the same way as in Theorem 4.3 in \cite{Ozkan3}.
\end{proof}

\subsection{The fully dynamic Mad-Marcus (M-M) model}
The model with magnetic effects is a strongly coupled system for bending, shear and charge equations. We  consider the  bending-free model with the following $B^*-$ type feedback controller $V(t)=-k_1  \dot p(L), \quad k_1>0:$
\begin{eqnarray}
 \label{perturbeddd-an-1} \left\{
  \begin{array}{ll}
  \mu  h_3 \ddot p   -\beta B_4  p_{xx}    -\gamma\beta h_2h_3 \varsigma B_2^2 J p = -V(t)\delta_L,&\\
  p(0)=0,~~ \beta B_4 p_x(L)=-k_1 \dot p(L),&\\
   ( p,  \dot p)(x,0)=( p_0,  p_1).
    \end{array} \right.
\end{eqnarray}
Let $\mc{H} =  H^1_L(0,L)\times \Ltwo(0,L).$ The  energy associated with (\ref{perturbeddd-an-1}) is
\begin{eqnarray}
\nonumber && \mathrm{E}(t) =\frac{1}{2}\int_0^L \left\{ \mu h_3  |\dot p|^2 +\beta B_4 |p_x|^2-\gamma \beta h_2 h_3 \varsigma B_2^2(J  p)) \bar p \right\} ~dx
\end{eqnarray}
 This motivates definition of the inner product on $\mc{H}$
\begin{eqnarray}
\nonumber  \left<\left[ \begin{array}{l}
 u_1 \\
 u_2
 \end{array} \right], \left[ \begin{array}{l}
 v_1 \\
 v_2
 \end{array} \right]\right>_{\mc{H}}=  \int_0^L \left\{\mu h_3  u_2 { {\bar v}_2}+ \beta B_4 (u_1)_x (\bar v_1)_x  -\gamma\beta \varsigma h_2 h_3 \tilde B^2 J (u_1) (\bar u_1) \right\}dx.
 \end{eqnarray}
Define  the operator $\mc A: {\text{Dom}}(\mc A)\subset \mc H \to \mc H:$
 \begin{eqnarray}
 \label{op-a}\mc A= \left[ {\begin{array}{*{20}c}
   0 & I \\
       \frac{\beta B_4}{\mu h_3} D_x^2    +\frac{\gamma\beta h_2\varsigma B_2^2}{\mu } J   &  0  \\
\end{array}} \right]
\end{eqnarray} where
\begin{eqnarray}
 \label{d-mm}  \left.
\begin{array}{ll}
{\rm {Dom}}{\mc A} = (H^2(0,L)\cap H^1_L(0,L))    \bigcap \{ \vec z :~ \beta B_4 (z_1)_{xx}    +\gamma\beta h_2h_3\varsigma B_2^2 J z_1  \in \mc H, & \\
   \beta B_4 (z_1)_x(L)    +k_1 \mu h_3 z_2(L)=0\}
 \end{array} \right.
\end{eqnarray}
 \begin{theorem} \label{skeww-1}The operator $\mc A$ defined by (\ref{op-a})-(\ref{d-mm}) is dissipative in $\mc H.$ Moreover,
$\mc A^{-1}$ exists and is compact on $\mc H.$ Therefore, $\mc A$ generates a $C_0$-semigroup of contractions on $\mc H$ and the
spectrum $\sigma(\mc A)$ consists of isolated eigenvalues only.
\end{theorem}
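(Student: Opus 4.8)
The plan is to verify the two hypotheses of the L\"umer--Phillips theorem---dissipativity and a range (maximality) condition---and then to read off both the generation of a contraction semigroup and the discreteness of the spectrum from the compactness of $\mc A^{-1}$. Throughout I would exploit that, by Lemma \ref{pxi}, the nonlocal operator $J=P_\xi D_x^2$ is self-adjoint, real and non-positive on $\Ltwo(0,L)$, and treat it accordingly rather than as a local differential operator.

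\textbf{Dissipativity.} For $\vec z=(z_1,z_2)^{\rm T}\in{\rm Dom}(\mc A)$ one has $\mc A\vec z=\bigl(z_2,\ \tfrac{\beta B_4}{\mu h_3}(z_1)_{xx}+\tfrac{\gamma\beta h_2\varsigma B_2^2}{\mu}Jz_1\bigr)^{\rm T}$. I would insert this into $\langle\cdot,\cdot\rangle_{\mc H}$ and integrate the term $\int_0^L\beta B_4(z_1)_{xx}\bar z_2\,dx$ by parts once. The boundary contribution at $x=0$ vanishes since $z_2(0)=0$, while at $x=L$ the domain condition $\beta B_4(z_1)_x(L)=-k_1\mu h_3\,z_2(L)$ converts the boundary term into $-k_1\mu h_3|z_2(L)|^2$. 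The two surviving $\beta B_4$-integrals are complex conjugates of one another, and---because $J$ is self-adjoint and real---the two $J$-integrals $\int_0^L(Jz_1)\bar z_2$ and $\int_0^L(Jz_2)\bar z_1$ are likewise conjugate; both pairs therefore contribute only an imaginary part. Taking real parts leaves
\begin{eqnarray}
\nonumber {\rm Re}\left<\mc A\vec z,\vec z\right>_{\mc H}=-k_1\mu h_3\,|z_2(L)|^2\le 0,
\end{eqnarray}
which is exactly dissipativity.

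\textbf{Maximality and invertibility.} Next I would solve $(I-\mc A)\vec z=\vec f$ for arbitrary $\vec f=(f_1,f_2)^{\rm T}\in\mc H$. Eliminating $z_2=z_1-f_1$ reduces the system to the single elliptic equation $z_1-\tfrac{\beta B_4}{\mu h_3}(z_1)_{xx}-\tfrac{\gamma\beta h_2\varsigma B_2^2}{\mu}Jz_1=f_1+f_2$, subject to $z_1(0)=0$ and the Robin-type condition $\beta B_4(z_1)_x(L)+k_1\mu h_3\,z_1(L)=k_1\mu h_3\,f_1(L)$. Testing against $\phi\in H^1_L(0,L)$ yields a bilinear form whose principal part $\int_0^L\beta B_4(z_1)_x(\bar\phi)_x$ is coercive on $H^1_L(0,L)$ by the Poincar\'e inequality (here $z_1(0)=0$ is essential), while the lower-order contribution $-\gamma\beta h_2 h_3\varsigma B_2^2\langle Jz_1,z_1\rangle$ and the boundary term $k_1\mu h_3|z_1(L)|^2$ are both nonnegative, the former because $J$ is non-positive. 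The right-hand side defines a bounded antilinear functional (the trace $f_1(L)$ is controlled by $\|f_1\|_{H^1_L}$). Lax--Milgram then gives a unique weak solution $z_1\in H^1_L(0,L)$, and elliptic regularity upgrades it to $z_1\in H^2(0,L)$ with $\vec z\in{\rm Dom}(\mc A)$; running the identical argument with the zeroth-order term dropped (the principal part alone is already coercive) shows $\mc A$ is boundedly invertible, so $0\in\rho(\mc A)$.

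\textbf{Conclusions and the main difficulty.} Dissipativity together with surjectivity of $I-\mc A$ gives, via L\"umer--Phillips, that $\mc A$ generates a $C_0$-semigroup of contractions on $\mc H$. For compactness I would note that $\mc A^{-1}$ maps $\mc H$ into $\bigl(H^2(0,L)\cap H^1_L(0,L)\bigr)\times H^1_L(0,L)$, whose embedding into $H^1_L(0,L)\times\Ltwo(0,L)=\mc H$ is compact by Rellich--Kondrachov; hence $\mc A^{-1}$ is compact, and a compact resolvent forces $\sigma(\mc A)$ to consist of isolated eigenvalues of finite multiplicity only. The one genuinely delicate point---and the part I expect to require the most care---is the bookkeeping around $J$: in both the dissipativity identity and the coercivity estimate one must invoke its self-adjointness and non-positivity from Lemma \ref{pxi}, and one must keep precise track of which boundary conditions ($z_1(0)=0$, $z_2(0)=0$, and the Robin condition at $x=L$) are consumed at each integration by parts.
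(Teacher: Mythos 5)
Your proposal is correct, and its skeleton---dissipativity via integration by parts, a range condition, then a compact inverse forcing a purely discrete spectrum---matches the paper's. The dissipativity computation is essentially identical to the paper's: both use $z_2(0)=0$ and the domain condition $\beta B_4 (z_1)_x(L)=-k_1\mu h_3 z_2(L)$ to produce the boundary term $-k_1\mu h_3|z_2(L)|^2$, with the remaining integrals cancelling in the real part by the self-adjointness of $J$. Where you genuinely diverge is the maximality/invertibility step. The paper solves $\mc A Y=Z$ directly and explicitly: invoking the identity $Jy_1=P_\xi (y_1)_{xx}$ of Lemma \ref{pxi}, it rewrites the second equation as $\left(\beta B_4 I + \gamma\beta h_2 h_3 \varsigma B_2^2 P_\xi\right)(y_1)_{xx}=z_2$, inverts the positive bounded operator in front to get $(y_1)_{xx}=\psi\in \Ltwo(0,L)$, and then recovers $y_1$ by integrating twice, fixing the constants of integration with the boundary conditions (including the Robin condition at $x=L$); compactness of $\mc A^{-1}$ is then asserted rather than argued. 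You instead solve $(I-\mc A)\vec z=\vec f$ variationally: Lax--Milgram, with coercivity coming from Poincar\'e plus the non-positivity of $J$ and the sign of the Robin term, followed by elliptic regularity, and you make compactness explicit via Rellich--Kondrachov. Each route buys something: the paper's exploits the precise structure of $J$ and yields an essentially explicit formula for the resolvent with no regularity theory needed; yours uses only that $J$ is bounded, self-adjoint and non-positive (so it would survive any perturbation with those properties), verifies the textbook L\"umer--Phillips condition at $\lambda=1$, and is logically tidier---the paper asserts generation immediately after dissipativity, before any range condition is established, and its phrase ``$0\in\sigma(\mc A)$, i.e.\ $0$ is not an eigenvalue'' is evidently a slip for $0\in\rho(\mc A)$, both of which your ordering avoids.
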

\begin{proof} Let $Y\in{\rm Dom}(\mc A).$ Then
\begin{eqnarray*}
\nonumber &&\left<\mc AY,Y\right>=\int_0^L \left[\left( \beta B_4 (y_1)_{xx}    +\gamma\beta h_2h_3\varsigma B_2^2 J y_1 \right) \bar y_2 \right.\\
\nonumber && \left.+ \left(\beta B_4 (y_2)_x (\bar y_1)_x -\gamma\beta \varsigma h_2 h_3 \tilde B^2 J (y_2) (\bar y_1)\right)\right]~dx\\
\nonumber =&&\left.   \beta B_4 (y_1)_{x}  \bar y_2\right|_{x=0}^L\\
\nonumber &&+\int_0^L\left[  \beta B_4 (-(y_1)_{x}(\bar y_2)_x +  (\bar y_1)_x) (y_2)_x +\gamma\beta h_2h_3\varsigma B_2^2 \left(J y_1 \bar y_2- J (\bar y_1)(y_2)\right) \right]dx
\end{eqnarray*}
Therefore, ${\rm Re}\left<\mc AY,Y\right>=-k_1\mu h_3|z_2(L)|^2\le 0.$ Therefore, $\mc A$ generates a $C_0$-semigroup of contractions on $\mc H.$

Next, we show that $0\in \sigma (\mc A),$ i.e. $0$ is not an eigenvalue. Let $Z\in \mc H.$ We show that there exists $Y\in {\rm Dom}(\mc A)$ such that $\mc A Y=Z:$
\begin{eqnarray*}
&& y_2=z_1\in H^1_L(0,L),\\
&&\beta B_4  (y_1)_{xx}    +\gamma\beta h_2h_3 \varsigma B_2^2 J y_1 =z_2.
\end{eqnarray*}
Since $Jy_1=P_\xi^{-1} (y_1)_{xx},$ the second equation  can be re-written as
\begin{eqnarray*}(\beta B_4  I +\gamma\beta h_2h_3 \varsigma B_2^2 P_\xi^{-1})(y_1)_{xx}=z_2
\end{eqnarray*} where $(\beta B_4  I +\gamma\beta h_2h_3 \varsigma B_2^2 P_\xi^{-1})$ is a positive operator, and therefore, is invertible. Therefore,
\begin{eqnarray*}(y_1)_{xx}=(\beta B_4  I +\gamma\beta h_2h_3 \varsigma B_2^2 P_\xi^{-1})^{-1}  z_2:=\psi\in L^2(0,1).
\end{eqnarray*}
By integrating the equation twice we conclude that
\begin{eqnarray*}
y_1=\int_1^x (x-\tau )\psi(\tau) d\tau + \int_0^1 \tau \psi(\tau) d\tau - \frac{k_1 \mu h_3z_1(L)}{\beta B_4}x.
\end{eqnarray*}
Thus, $Y\in {\rm Dom}(\mc A).$ Since  $0\in \sigma (\mc A),$ and $\mc A^{−-1}$ is compact on $\mc H,$   the
spectrum $\sigma(\mc A)$ consists of isolated eigenvalues only.
 \end{proof}

Let $\lambda =i \tau.$ Consider the eigenvalue problem $\mc A \varphi=\lambda \varphi$ corresponding to (\ref{perturbeddd-an-1}):
\begin{eqnarray}
\label{perturbed-dumb-2} \left\{ \begin{array}{ll}
  C\varsigma \phi^2 -\phi^2_{xx} + B_2 p_{xx}=0&\\
   - \beta B_4  p_{xx} + \gamma \beta h_2h_3 \varsigma B_2\phi^2   =  \mu  h_3 \tau^2 p,&\\
   \phi_2(0)=\phi^2_x(L)=p(0)=  \beta B_4 p_x(L)    +k_1 \mu h_3 i\tau  p(L)=0.
  \end{array} \right.
\end{eqnarray}
Now let $\Phi=[p~ p_x ~\varphi ~\varphi_x]^{\rm T}.$ The system (\ref{perturbed-dumb-2}) can be written as
\begin{eqnarray*}
\Phi_x=\tilde A\Phi:=\left(
                      \begin{array}{cccc}
                        0 & 1 & 0 & 0 \\
                        -\frac{\mu h_3 \tau^2}{\beta B_4} & 0 & \frac{\gamma h_2 h_3 \varsigma B_2}{B_4} & 0 \\
                        0 & 0 & 0 & 1 \\
                        -\frac{\mu h_3 B_2\tau^2}{\beta B_4}  & 0 & C\varsigma + \frac{\gamma h_2 h_3 \varsigma B_2^2}{B_4} & 0 \\
                      \end{array}
                    \right)\Phi.
\end{eqnarray*}
Now consider the auxiliary eigenvalue problem $\tilde {\mc A}  \tilde \Phi=\tilde \lambda \tilde \Phi $ that has the following characteristic equation:
\begin{eqnarray*}
\tilde \lambda^4+\left(\frac{h_3 \mu  \tau ^2}{\beta  B_4}-\frac{B_2^2 \gamma \varsigma h_2 h_3}{B_4}-C \varsigma\right)\tilde \lambda^2-\frac{C\varsigma  h_3 \mu  \tau ^2}{\beta  B_4}=0.
\end{eqnarray*}
Let  $\tau ^2>\frac{\beta B_4}{\mu h_3}\left(C\varsigma + \frac{B_2^2 \gamma \varsigma h_2 h_3}{B_4}\right)$. There are four complex conjugate eigenvalues $\tilde \lambda=\{\mp ia_1, \mp ia_2\}$ where
      \begin{eqnarray*}
      \nonumber a_1=\frac{\sqrt{\frac{-B_2^2 \gamma \varsigma h_2 h_3}{B_4}-C\varsigma+\frac{h_3 \mu  \tau ^2}{\beta  B_4}+\sqrt{(\frac{h_3 \mu  \tau ^2}{\beta  B_4}-\frac{B_2^2 \gamma \varsigma  h_2 h_3}{B_4}-C\varsigma)^2+4\frac{C \varsigma  h_3 \mu  \tau ^2}{\beta  B_4}}}}{\sqrt{2}}\\
    \label{e-values}  a_2=\frac{\sqrt{\frac{-B_2^2 \gamma \varsigma h_2 h_3}{B_4}-C\varsigma+\frac{h_3 \mu  \tau ^2}{\beta  B_4}- \sqrt{(\frac{h_3 \mu  \tau ^2}{\beta  B_4}-\frac{B_2^2 \gamma \varsigma h_2 h_3}{B_4}-C\varsigma)^2+4\frac{C \varsigma h_3 \mu  \tau ^2}{\beta  B_4}}}}{\sqrt{2}}.
      \end{eqnarray*}
      Define $b_1:=\frac{B_4}{\gamma \varsigma h_2 h_3 B_2}\left(\frac{\mu h_3}{\beta B_4}\tau^2-a_1^2\right),$ $b_2:=\frac{B_4}{\gamma\varsigma h_2 h_3 B_2}\left(\frac{\mu h_3}{\beta B_4}\tau^2-a_2^2\right)$ where $b_1,b_2\ne 0$ and $b_1-b_2\ne 0.$ By using the boundary conditions at $x=0,$
      \begin{eqnarray*}
     \nonumber && y_1(x)=\frac{a_1 \left(b_1 C_1-C_2\right) \sin \left(a_2 x\right)+a_2 \left(C_2-b_2 C_1\right) \sin \left(a_1 x\right)}{a_1 a_2 \left(b_1-b_2\right)}\\
     \nonumber &&y_2(x)=\frac{a_1 b_2 \left(b_1 C_1-C_2\right) \sin \left(a_2 x\right)+a_2 b_1 \left(C_2-b_2 C_1\right) \sin \left(a_1 x\right)}{a_1 a_2 \left(b_1-b_2\right)}
      \end{eqnarray*}
      where $C_1, C_2$ are arbitrary constants.  By using the boundary conditions at $x=L,$ the coefficient matrix for $(C_1 ~ C_2)^{\rm T}$ has the determinant
      {\small
      \begin{eqnarray*}
      \beta  B_4 \cos \left(a_1 L\right) \cos \left(a_2 L\right)+\frac{i h_3 K_1 \mu  \tau  \left(a_1 b_1 \sin \left(a_2 L\right) \cos \left(a_1 L\right)-a_2 b_2 \sin \left(a_1 L\right) \cos \left(a_2 L\right)\right)}{a_1 a_2 \left(b_1-b_2\right)}=0.
      \end{eqnarray*}}
      Assume $a_1=\frac{(2n-1)\pi}{2L}, a_2=\frac{(2m-1)\pi}{2L}$ for some $n,m\in \mathbb{R}^+$ so that
       \begin{eqnarray}\label{tau}\tau_{nm}=\sqrt{\frac{\beta B_4}{\mu h_3}\left(C\varsigma  + \frac{B_2^2 \gamma \varsigma h_2 h_3}{B_4}\right)+ (2n-1)^2+(2m-1)^2}.
        \end{eqnarray}
        Then, the determinant becomes zero. Therefore, we find a non-trivial solution of (\ref{perturbed-dumb-2}):
       \begin{eqnarray}\label{sol-eig}\left\{
      \begin{array}{cc} p_{nm}(x)=\sin \left(a_{2,m} x\right)-\sin \left(a_{1,n} x\right),&\\
         \phi_{nm}^2(x)=b_{2,m} \sin \left(a_{2,m} x\right)-b_{1,n} \sin \left(a_{1,n} x\right).
         \end{array}\right.
       \end{eqnarray}
       Note that the condition $a_1=\frac{(2n-1)\pi}{2L}, a_2=\frac{(2m-1)\pi}{2L}$  is equivalent to the following condition that material parameters satisfy
       \begin{eqnarray}
       \nonumber (2n-1)^2(2m-1)^2-C\varsigma \left[(2n-1)^2+(2m-1)^2\right]=\frac{C\varsigma L^2}{\pi^2}\left(C\varsigma  + \frac{B_2^2 \gamma \varsigma h_2 h_3}{B_4}\right).
       \end{eqnarray}
 \begin{theorem}\label{lack}
   Then the solutions $\{\varphi(t)\}_{t\in \mathbb{R}^+}$ with $V(t)=-k_1\dot p(L)$  of the closed-loop system (\ref{perturbeddd-an-1})
     is NOT strongly stable in $\mc H$ if $a_1=\frac{(2n-1)\pi}{2L}, a_2=\frac{(2m-1)\pi}{2L}$ for some $m,n\in\mathbb{Z},$ and $\tau_{nm}$ is defines by (\ref{tau})
\end{theorem}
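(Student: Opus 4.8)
The plan is to read the explicit computation preceding the statement as the construction of a genuine purely imaginary eigenvalue of $\mc A$, and then to invoke the elementary fact that a contraction semigroup cannot be strongly stable once its generator carries an eigenvalue on the imaginary axis. By Theorem \ref{skeww-1} the operator $\mc A$ of (\ref{op-a})--(\ref{d-mm}) generates a $C_0$-semigroup of contractions $\{e^{\mc A t}\}_{t\ge 0}$ on $\mc H = H^1_L(0,L)\times \Ltwo(0,L)$. Writing the eigenvalue problem $\mc A\varphi = \lambda\varphi$ in the state variable $\varphi = [p,\dot p]^{\rm T}$ with $\lambda = i\tau$, the first row forces $\dot p = \lambda p$, so every eigenfunction has the form $\Phi = [p,\, i\tau\, p]^{\rm T}$ with $p$ solving the reduced spectral system (\ref{perturbed-dumb-2}). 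The calculation above shows that once $\tau^2$ exceeds $\frac{\beta B_4}{\mu h_3}\bigl(C\varsigma + \frac{B_2^2\gamma\varsigma h_2 h_3}{B_4}\bigr)$ the auxiliary ODE has the two purely imaginary frequency pairs $\mp i a_1,\ \mp i a_2$, and the boundary determinant vanishes exactly at the resonant choice $a_1 = \frac{(2n-1)\pi}{2L}$, $a_2 = \frac{(2m-1)\pi}{2L}$, producing the candidate $p_{nm}$ of (\ref{sol-eig}) at $\tau = \tau_{nm}$ given by (\ref{tau}); note that (\ref{tau}) automatically places $\tau_{nm}^2$ above the threshold, since $(2n-1)^2+(2m-1)^2>0$.

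The substantive step is to confirm that $\Phi_{nm} = [p_{nm},\, i\tau_{nm}\, p_{nm}]^{\rm T}$ is a \emph{nonzero} element of ${\rm Dom}(\mc A)$. Nontriviality needs $a_1\ne a_2$, that is $n\ne m$, so that $p_{nm}=\sin(a_2 x)-\sin(a_1 x)\not\equiv 0$. Domain membership requires the damped boundary condition $\beta B_4\, p_x(L)+k_1\mu h_3\, i\tau_{nm}\, p(L)=0$ of (\ref{d-mm}). At the resonant values $\cos(a_1 L)=\cos(a_2 L)=0$, whence $p_{nm,x}(L)=a_2\cos(a_2 L)-a_1\cos(a_1 L)=0$; the boundary condition then forces $p_{nm}(L)=0$, which holds precisely when $m$ and $n$ share the same parity, since $p_{nm}(L)=(-1)^{m-1}-(-1)^{n-1}$. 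This compatibility is the one place where the number-theoretic hypothesis on the frequencies must be reconciled with the dissipative boundary term, and it is the main (though brief) obstacle: for any eigenvalue on the imaginary axis the boundary dissipation ${\rm Re}\left<\mc A\Phi_{nm},\Phi_{nm}\right>_{\mc H}=-k_1\mu h_3\,\tau_{nm}^2\,|p_{nm}(L)|^2$ must vanish, and I would check that the resonance annihilating the determinant is exactly what enforces $p_{nm}(L)=0$.

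With $\mc A\Phi_{nm}=i\tau_{nm}\Phi_{nm}$ and $\Phi_{nm}\ne 0$ established, the conclusion is immediate: $e^{\mc A t}\Phi_{nm}=e^{i\tau_{nm} t}\Phi_{nm}$, so $\|e^{\mc A t}\Phi_{nm}\|_{\mc H}=\|\Phi_{nm}\|_{\mc H}$ is constant in $t$ and does not tend to $0$. Thus the orbit through $\Phi_{nm}$ is a nondecaying, indeed time-periodic, trajectory of the closed-loop semigroup, so strong stability in $\mc H$ fails. Physically, this isolated imaginary eigenvalue is an undamped, bending-free standing charge oscillation on which the single feedback $V(t)=-k_1\dot p(L)$ extracts no energy, which is precisely why one controller is insufficient for the fully dynamic Mead--Marcus model.
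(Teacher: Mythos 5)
Your proposal follows essentially the same route as the paper's own proof: read the resonance computation preceding the statement as producing an eigenfunction of $\mc A$ with purely imaginary eigenvalue $i\tau_{nm}$, observe that the boundary dissipation must vanish on it, and conclude that the closed-loop contraction semigroup carries a constant-norm orbit. Your version is tighter in two places, and one of them is a genuine correction: the paper asserts that the solution (\ref{sol-eig}) ``automatically'' satisfies $p_{nm}(L)=0$, whereas, as you compute, $p_{nm}(L)=(-1)^{m-1}-(-1)^{n-1}$ vanishes only when $m$ and $n$ have the same parity, and this parity condition is exactly what membership of $\Phi_{nm}$ in ${\rm {Dom}}(\mc A)$ from (\ref{d-mm}) requires. (The nontriviality condition $n\ne m$ you impose is automatic, since $a_1^2-a_2^2$ equals the square root appearing in their defining formulas and so cannot vanish.) In the opposite-parity case, which the hypothesis of Theorem \ref{lack} permits, your construction and the paper's both produce nothing; the fix is the other combination $p(x)=\sin(a_2x)+\sin(a_1x)$, which still has $p_x(L)=0$ and now $p(L)=(-1)^{m-1}+(-1)^{n-1}=0$, so the conclusion persists for all parities.

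There is, however, a gap that your proof inherits verbatim from the paper, at the step ``the auxiliary ODE has the two purely imaginary frequency pairs $\mp ia_1,\ \mp ia_2$.'' The characteristic polynomial is $\tilde\lambda^4+X\tilde\lambda^2-Z$ with $X=\frac{h_3\mu\tau^2}{\beta B_4}-\frac{B_2^2\gamma\varsigma h_2h_3}{B_4}-C\varsigma$ and $Z=\frac{C\varsigma h_3\mu\tau^2}{\beta B_4}>0$; since the constant term is negative, the two roots in $\tilde\lambda^2$ have opposite signs, so $a_1^2=\frac{X+\sqrt{X^2+4Z}}{2}>0$ while $a_2^2=\frac{X-\sqrt{X^2+4Z}}{2}<0$ for every real $\tau$, regardless of the stated threshold. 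Thus $a_2$ is never real: two of the four fundamental solutions are hyperbolic rather than trigonometric, the resonance condition $a_2=\frac{(2m-1)\pi}{2L}$ is unsatisfiable, and neither your eigenfunction nor (\ref{sol-eig}) actually solves (\ref{perturbed-dumb-2}) for any admissible $\tau$, making the theorem as stated vacuous. This defect originates in the computation preceding the statement, not in your reasoning, and it afflicts the paper's proof equally; but a non-vacuous version of Theorem \ref{lack} would require redoing the resonance analysis with the mixed trigonometric and hyperbolic fundamental system, where the boundary determinant no longer vanishes merely because two cosines do.
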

\begin{proof}
Consider the eigenvalue problem (\ref{perturbed-dumb-2}) with (\ref{set2}). We show that
there are  eigenvalues on the imaginary axis, or in other words, the set
\begin{eqnarray} \label{set2}\left\{z\in \mc H: {\rm Re} \left<\mc A z, z\right>_{\mc H}= -k_1\mu h_3|z_2(L)|^2=0\right\}
\end{eqnarray}
has non-trivial solutions. With (\ref{set2}), the eigenvalue problem becomes overdetermined with the extra boundary condition   $p(L)=0.$ If $\frac{a_1}{a_2}=\frac{2n-1}{2m-1},$ the nontrivial solution (\ref{sol-eig}) automatically satisfies $p_{nm}(L)=0.$ In other words, the $B^*-$type feedback $V(t)=-k_1 \dot p_{nm}(L)$ does not stabilize the system if the material parameters satisfy $\frac{a_1}{a_2}=\frac{2n-1}{2m-1}$ for some $m,n\in\mathbb{Z}$ with $\tau_{nm}$ defined by (\ref{tau}).
\end{proof}
\subsection{Electrostatic Mead-Marcus (M-M) model}

The electrostatic M-M model (\ref{MM})-(\ref{MM-b}) is a continuous perturbation of the classical Euler-Bernoulli model due to the operator $J$  defined in Lemma \ref{pxi}. Controlling the Euler-Bernoulli beam through its boundary  has been a long standing problem in the PDE control theory, see \cite{Chen,Horn,Lasiecka,Liu} and the references therein. It is proved that one of the two controllers acting on the boundary is unnecessary to achieve exponential stability.

The only boundary feedback stabilization result for the model (\ref{MM})-(\ref{MM-b})   is provided by \cite{Wang} for a  three-layer composite (having no piezoelectric layer) with clamped-free boundary conditions, and only a mechanical controller $g(t)$ is applied at the free end $x=L,$ see (\ref{ivpd}).  This type of mechanical boundary control is ruled out for a smart piezoelectric M-M beam since we want to control the overall bending motion of the composite by only an electrical controller $V(t)$ which controls the bending moment at the tip $x=L$, not the transverse shear.



We choose the following $B^*-$ type feedback controller
\begin{eqnarray}\label{MM-feed}V(t)=-k_1  (P_\xi \dot w_x)(L,t),\quad  g(t)\equiv 0,~~~ \quad k_1>0
\end{eqnarray}
where $P_\xi \dot w_{x}(L)=\frac{1}{\tilde B \xi} \left( \dot w_x-\dot \phi^2 \right)(L),$ and $\dot \phi^2$ is the velocity of the shear of middle layer.   Presumably, this type of feedback is a perturbation of the angular velocity feedback $\dot w_x(L)$ in the Euler-Bernoulli model.
The energy of the system dissipates and it satisfies
\begin{eqnarray}
\nonumber\begin{array}{ll}
 \frac{dE(t)}{dt}&=\gamma V(t)\int_0^L \left[ h_2 h_3 \varsigma \tilde B (P_\xi \dot w_x)(L) +  \frac{B_3}{B_4} \dot w_x(L)\right]\\
 &=-\frac{k_1\gamma}{B_4} \left( h_2 h_3 \varsigma \tilde BB_2 P_\xi  +  B_3 I\right)\dot w_x(L) \cdot  P_\xi \dot w_x(L)\\
 &\le  0
\end{array}
\end{eqnarray}
where $ h_2 h_3 \varsigma \tilde BB_2 P_\xi  +  B_3 I$ is a non-negative operator. Observe that $P_\xi \dot w_x(L)\cdot \dot w_x(L)$  is the total piezoelectric effect  due to the coupling of the charge equation to shear and bending at the same time. In fact, this is a damping injection through the shear of the middle layer to control the bending moments at $x=L$.

Let the operator $\mc A$ be the same as (\ref{A-MM}) with the new domain
\begin{eqnarray}
 \label{A-MM-newd}  \left.
\begin{array}{ll}
{\rm {Dom}}(\mc A) = \{ (z_1,z_2)\in \mc H, z_2\in H^2_L(0,L),&\\
\quad ~\tilde A (z_1)_{xxx}+\gamma\beta \varsigma h_2 h_3 \tilde B^2  J (z_1)_x \in H^1(0,L),&\\
 \quad \left.\tilde A(z_1)_{xx}(L)+k_1\frac{\gamma}{B_4}\left( \varsigma h_2 h_3  \tilde B B_2 (P_\xi) + B_3 I\right)(z_2)_x\cdot P_\xi (\bar z_2)_x\right|_{x=L}=0,&\\
  \quad \tilde A (z_1)_{xxx}(L)+\gamma\beta \varsigma h_2 h_3 \tilde B^2  J (z_1)(L)=0 \}. &
 \end{array} \right.
\end{eqnarray}

\begin{theorem} \label{MM-mult}The operator $\mc A$ defined by (\ref{A-MM})-(\ref{A-MM-newd}) is dissipative in $\mc H.$ Moreover,
$\mc A^{-1}$ exists and is compact on $\mc H.$ Therefore, $\mc A$ generates a $C_0$-semigroup of contractions on $\mc H$ and the
spectrum $\sigma(\mc A)$ consists of isolated eigenvalues only.
\end{theorem}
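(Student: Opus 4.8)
The plan is to follow closely the template already used for the fully dynamic operator in Lemma \ref{skeww} and Theorem \ref{skeww-1}, adapting the integration by parts to the non-local operator $J=P_\xi D_x^2$ of Lemma \ref{pxi}. First I would verify dissipativity. Writing $Y=(y_1,y_2)\in{\rm Dom}(\mc A)$ and using $\mc A Y=(y_2,-A_0 y_1)$ with $A_0=\tfrac1m(\tilde A D_x^4+\gamma\beta\varsigma h_2 h_3 \tilde B^2 D_x J D_x)$, I would expand $\langle \mc A Y,Y\rangle_{\mc H}=\langle -A_0 y_1,y_2\rangle_{\mathrm H}+\langle y_2,y_1\rangle_{\mathrm V}$ and integrate by parts four times in the $D_x^4$ term and twice in the $D_x J D_x$ term. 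The crucial inputs are that $J$ is self-adjoint (Lemma \ref{pxi}) and that $\tilde A>0$, which force the two interior contributions to be complex conjugates of one another and hence purely imaginary; thus $\mathrm{Re}\langle\mc A Y,Y\rangle_{\mc H}$ reduces to boundary terms at $x=L$ (those at $x=0$ vanish because $y_1,y_2\in H^2_L$). The boundary conditions built into ${\rm Dom}(\mc A)$ in (\ref{A-MM-newd}) are precisely those that collapse these boundary terms to the nonpositive feedback rate already computed for $\frac{dE}{dt}$ preceding (\ref{A-MM-newd}), namely $\mathrm{Re}\langle\mc A Y,Y\rangle_{\mc H}=-\frac{k_1\gamma}{B_4}\big(h_2 h_3\varsigma\tilde B B_2 P_\xi+B_3 I\big)(y_2)_x(L)\cdot P_\xi(y_2)_x(L)\le 0$, using that $h_2 h_3\varsigma\tilde B B_2 P_\xi+B_3 I$ is non-negative.

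Next I would establish $0\in\rho(\mc A)$ by solving $\mc A Y=Z$ for arbitrary $Z=(z_1,z_2)\in\mc H$. The first row gives $y_2=z_1\in H^2_L(0,L)$ directly, and the second row is the fourth-order elliptic problem $\tilde A(y_1)_{xxxx}+\gamma\beta\varsigma h_2 h_3\tilde B^2(J(y_1)_x)_x=-mz_2$ together with the boundary conditions of (\ref{A-MM-newd}). I would solve it variationally: the sesquilinear form $a(u,v)=\int_0^L\big(\tilde A u_{xx}\bar v_{xx}-\gamma\beta\varsigma h_2 h_3\tilde B^2 J u_x\bar v_x\big)\,dx$ on $H^2_L(0,L)$ is exactly the $\mathrm V$ inner product; it is bounded because $J$ is continuous, and coercive because $-J\ge 0$ (Lemma \ref{pxi}) makes the second term non-negative while $\tilde A\|u_{xx}\|^2_{\mX}$ controls the full $H^2_L$ norm via Poincar\'e (as $u(0)=u_x(0)=0$). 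Lax--Milgram then yields a unique $y_1\in H^2_L(0,L)$, and elliptic regularity upgrades $y_1$ to the $H^4$-type regularity and recovers the natural boundary conditions, placing $(y_1,y_2)\in{\rm Dom}(\mc A)$; uniqueness gives injectivity, so $\mc A^{-1}$ exists and is bounded.

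Finally, since ${\rm Dom}(\mc A)$ is continuously contained in a space of the type $(H^4\cap H^2_L)\times H^2_L$, which embeds compactly into $\mc H=H^2_L\times\mX$ by Rellich--Kondrachov, the bounded operator $\mc A^{-1}$ is compact. Dissipativity of the densely defined operator $\mc A$ together with $0\in\rho(\mc A)$ (which, by openness of the resolvent set, places a whole neighborhood of $0$ in $\rho(\mc A)$ and in particular some $\lambda_0>0$, giving ${\rm Ran}(\lambda_0 I-\mc A)=\mc H$) lets the L{\"u}mer--Phillips theorem apply, so $\mc A$ generates a $C_0$-semigroup of contractions; compactness of the resolvent then forces $\sigma(\mc A)$ to consist of isolated eigenvalues of finite multiplicity.

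The main obstacle will be the careful bookkeeping of the boundary terms in the dissipativity step: because $J=P_\xi D_x^2$ is non-local, integrating $D_x J D_x$ by parts produces boundary contributions of the form $(J(y_1)_x)\bar y_2$ and $J(y_1)(L)$, and one must check that these combine with the feedback condition in (\ref{A-MM-newd}) — rather than with the naive free conditions of (\ref{MM-b}) — to reproduce exactly the dissipation rate of $\frac{dE}{dt}$. A secondary technical point is verifying that the $J$-perturbed form $a(\cdot,\cdot)$ remains coercive on $H^2_L(0,L)$; this is where the sign information $-J\ge 0$ from Lemma \ref{pxi} is indispensable.
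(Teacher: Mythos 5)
Your proposal is correct, and its dissipativity step coincides with the paper's: both integrate by parts, use the self-adjointness and non-positivity of $J$ from Lemma \ref{pxi}, and let the feedback boundary condition in (\ref{A-MM-newd}) collapse the boundary terms into the non-positive dissipation rate. Where you genuinely diverge is the invertibility step. The paper only solves the \emph{homogeneous} problem (\ref{eig1-zero}): it substitutes $u=Jw_x$, uses $J=(\varsigma\tilde C I-D_x^2)^{-1}D_x^2$ to rewrite the fourth-order equation as a coupled second-order system, integrates once, reduces to $\bigl(\xi+\tfrac{\beta\gamma h_2h_3\varsigma\tilde B^2}{\tilde A}\bigr)u-u_{xx}=0$, and concludes $u\equiv 0$, hence $w\equiv 0$ --- i.e.\ it shows only that $\ker \mc A$ is trivial, and then \emph{asserts} that $\mc A^{-1}$ exists and is compact. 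Your Lax--Milgram argument instead solves the full resolvent equation $\mc A Y=Z$ for arbitrary $Z\in\mc H$, which delivers bijectivity and boundedness of $\mc A^{-1}$ in one stroke, and your Rellich--Kondrachov step then gives compactness; this supplies exactly the surjectivity that the paper's written proof omits here (the paper carries out the analogous surjectivity argument for the operator of Theorem \ref{skeww-1}, but not for this one), after which the L\"umer--Phillips conclusion is standard. What the paper's route buys is an explicit, elementary identification of the kernel that exploits the structure of $J$; what yours buys is a self-contained proof that $0\in\rho(\mc A)$, which is logically what the statement requires. One detail you should add: the boundary condition in (\ref{A-MM-newd}) couples $z_1$ and $z_2$, so once you set $y_2=z_1$ the natural boundary condition for $y_1$ at $x=L$ becomes inhomogeneous (determined by $z_1$); the Lax--Milgram right-hand side must therefore include the corresponding bounded boundary functional, and your elliptic-regularity upgrade of $y_1$ should be phrased using the invertibility of $\tilde A I+\gamma\beta\varsigma h_2h_3\tilde B^2 P_\xi$ on $L^2(0,L)$, since $J$ is nonlocal and $H^4$-regularity of $y_1$ is not immediate from the equation alone. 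These are routine repairs and do not affect the soundness of your approach.
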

\begin{proof} Let $Y\in{\rm Dom}(\mc A).$ Then
\begin{eqnarray*}
\nonumber \left<\mc AY,Y\right>&=&\int_0^L \left[\left(-\tilde A (y_1)_{xxxx} -\gamma\beta \varsigma h_2 h_3 \tilde B^2 D_x J (y_1)_x\right) \bar y_2  \right.\\
\nonumber && \left.\left(\tilde A (y_2)_{xx} (\bar y_1)_{xx}  -\gamma\beta \varsigma h_2 h_3 \tilde B^2 J (y_2)_{x}  (\bar y_1)_x \right)\right]~dx\\
\nonumber &=&\left. +\left(-\tilde A (y_1)_{xxx} -\gamma\beta \varsigma h_2 h_3 \tilde B^2 D_x J (y_1)\right) \bar y_2\right|_{x=0}^L+ \left.\tilde A (y_1)_{xx} (\bar y_2)_x\right|_{x=0}^L\\
\nonumber &&+\int_0^L\left[ -\tilde A (y_1)_{xx} (\bar y_2)_{xx}+ \gamma\beta \varsigma h_2 h_3 \tilde B^2 J (y_1)_x  (\bar y_2)_x \right.\\
\nonumber &&\left.\left(\tilde A (y_2)_{xx} (\bar y_1)_{xx}  -\gamma\beta \varsigma h_2 h_3 \tilde B^2 J (y_2)_{x}  (\bar y_1)_x \right)\right]~dx.
\end{eqnarray*}
Therefore, ${\rm Re}\left<\mc AY,Y\right>=-\frac{k_1\gamma}{B_4} \left( h_2 h_3 \varsigma \tilde BB_2 P_\xi  +  B_3 I\right) (z_2)_x(L) \cdot  P_\xi (\bar z_2)_x(L)\le 0.$
Therefore $\mc A$ is dissipative. Therefore,  if ${\mc A}^{-−1}$ exists, $\mc A$ must be densely defined in $\mc H.$ Therefore, $\mc A$ generates a $C_0$-semigroup of contractions on $\mc H.$ Next, we show that $0\in \sigma (\mc A),$ i.e. $0$ is not an eigenvalue. We solve the following problem:
\begin{eqnarray}
 \label{eig1-zero}\left\{\begin{array}{ll}
 {\tilde A} w_{xxxx}+\gamma\beta \varsigma h_2 h_3 \tilde B^2 (J w_{x})_x  = 0, &\\
 w(0)=w_x(0)= w_{x}(L)= {\tilde A} w_{xxx}(L)+\gamma\beta \varsigma h_2 h_3 \tilde B^2 J w_{x}(L)=0.
 \end{array}\right.
\end{eqnarray}
 Let $J w_x:=u.$ By the definition of $J=(\varsigma \tilde C I -D_x^2)^{-1} D_x^2,$ (\ref{eig1-zero}) is re-written as
  \begin{eqnarray}
\nonumber \left\{ \begin{array}{ll}
 {\tilde A} w_{xxxx} -  \beta \gamma h_2h_3 \varsigma{\tilde B}  u_x=0, &\\
\varsigma{\tilde C} u -u_{xx} + {\tilde B} w_{xxx} =0,&\\
w(0)=w_x(0)=u(0)= w_{xx}(L)=u_x(L)=  \tilde A w_{xxx}(L) - \beta \gamma h_2h_3 \varsigma{\tilde B} u(L)=0.
\end{array}\right.
 \end{eqnarray}
By using the last boundary condition, we integrate the first equation and plug it in the $u-$equation to get
$\left(\xi+\frac{\beta \gamma h_2h_3 \varsigma{\tilde B}^2  }{\tilde A}\right)u-u_{xx}=0.$
Since $\xi +\frac{\beta \gamma h_2h_3 \varsigma{\tilde B}^2  }{\tilde A} >0,$ by the boundary conditions for $u,$ we obtain that $u\equiv 0.$ This implies that $w_{xxx}=0.$ By the boundary conditions $w\equiv 0.$
$ {\tilde A} w_{xxx} -  \beta \gamma h_2h_3 \varsigma{\tilde B}  u=0.$
 Thus, $0\in \sigma (\mc A),$ and $\mc A^{-1}$ is compact on $\mc H.$  Hence the
spectrum $\sigma(\mc A)$ consists of isolated eigenvalues only.
\end{proof}
\begin{theorem}
   Let the feedback (\ref{MM-feed}) be chosen and $g(t)\equiv 0$ in (\ref{abstractMM}). Then the solutions $\varphi(t)$  for $t\in \mathbb{R}^+$ of the closed-loop system (\ref{Semigroupp-mmm})
     is strongly stable in $\mc H.$
\end{theorem}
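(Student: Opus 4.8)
The plan is to invoke LaSalle's invariance principle for the contraction semigroup generated by $\mc A$ and thereby reduce strong stability to the absence of eigenvalues of $\mc A$ on the imaginary axis. By Theorem \ref{MM-mult}, $\mc A$ is dissipative, generates a $C_0$-semigroup of contractions, and has compact resolvent; hence every trajectory is precompact, $\sigma(\mc A)$ consists of isolated eigenvalues, and $0\notin\sigma(\mc A)$. The $\omega$-limit set of any orbit is therefore a nonempty compact invariant set on which the energy $\|\varphi(t)\|^2_{\mc H}$ is constant, i.e. on which $\mathrm{Re}\,\langle\mc A\varphi,\varphi\rangle_{\mc H}=0$. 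It thus suffices to show that the largest $\mc A$-invariant subspace on which this quantity vanishes is trivial; equivalently, that $\mc A\varphi=i\tau\varphi$ has only $\varphi=0$ for every real $\tau$.

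Since $0\notin\sigma(\mc A)$, fix $\tau\ne0$ and let $\varphi=(z_1,z_2)^{\rm T}$ satisfy $\mc A\varphi=i\tau\varphi$, so that $z_2=i\tau z_1$ and, by (\ref{A-MM}),
\[
\tilde A (z_1)_{xxxx}+\gamma\beta\varsigma h_2h_3\tilde B^2\,(J (z_1)_x)_x=m\tau^2 z_1 .
\]
From the dissipation identity computed in the proof of Theorem \ref{MM-mult},
\[
\mathrm{Re}\,\langle\mc A\varphi,\varphi\rangle_{\mc H}=-\frac{k_1\gamma}{B_4}\bigl(h_2h_3\varsigma\tilde B B_2 P_\xi+B_3 I\bigr)(z_2)_x(L)\cdot P_\xi (\bar z_2)_x(L)=0 .
\]
Because $h_2h_3\varsigma\tilde B B_2 P_\xi+B_3 I$ is a positive operator and $z_2=i\tau z_1$ with $\tau\ne0$, this forces the extra boundary relation $P_\xi (z_1)_x(L)=0$. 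Using $J=\xi P_\xi-I$ from Lemma \ref{pxi} (with $\xi=\varsigma\tilde C$) and $\phi^2=-\tilde B J(z_1)_x$, this is exactly the coupling condition $\tilde B\,(z_1)_x(L)=\phi^2(L)$ between the bending slope and the shear. Hence $z_1$ solves the \emph{overdetermined} boundary-value problem given by the fourth-order equation above together with the clamped conditions $z_1(0)=(z_1)_x(0)=0$, the free-end conditions $(z_1)_{xx}(L)=0$ and $\tilde A(z_1)_{xxx}(L)+\gamma\beta\varsigma h_2h_3\tilde B^2 J(z_1)_x(L)=0$, and the additional condition $P_\xi (z_1)_x(L)=0$.

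To close the argument I would show this overdetermined problem admits only $z_1\equiv0$, following the multiplier scheme of Lemma \ref{xyz}. Introducing $\phi:=J(z_1)_x=P_\xi(z_1)_{xxx}$, so that $(\xi I-D_x^2)\phi=(z_1)_{xxx}$ with $\phi(0)=\phi_x(L)=0$ and, by the extra condition, $\phi(L)=0$, recasts the problem as a coupled sixth-order ODE system with constant coefficients. Multiplying the $z_1$-equation by $x\,(\bar z_1)_{xxx}$, adding the conjugate equation multiplied by $x\,(z_1)_{xxx}$, then adding $3(\bar z_1)_{xx}$ times the $z_1$-equation together with its conjugate, and using $\tau\in\mathbb R$ exactly as in the derivation of (\ref{CH3-mult1-200}), the lower-order and shear contributions should cancel and leave $\int_0^L\tilde A\,|(z_1)_{xx}|^2\,dx=0$. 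This yields $(z_1)_{xx}\equiv0$, and the clamped conditions at $x=0$ give $z_1\equiv0$, whence $z_2=i\tau z_1\equiv0$ and $\varphi=0$.

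The hard part will be precisely this last step. In contrast to the local algebraic coupling $\phi^2=\frac{1}{h_2}(-z^1+z^3+Hu_x)$ of the Rao--Nakra case, here the shear enters nonlocally through the resolvent $P_\xi$, so the multiplier identities no longer reduce to purely local boundary contributions and the commutators between $D_x$ and $P_\xi$ must be tracked carefully. More seriously, one must verify that the extra relation $P_\xi(z_1)_x(L)=0$ genuinely annihilates the eigenfunction rather than merely over-determining the system: the fully dynamic Mead--Marcus model of Theorem \ref{lack} shows that an analogous overdetermined eigenvalue problem \emph{does} admit nontrivial resonant solutions for special material parameters. The crux is therefore to show that the specific feedback through $P_\xi\dot w_x(L)$ in (\ref{MM-feed})---which simultaneously senses bending slope and shear---avoids every such resonance for all $\tau\ne0$, so that no conservative eigenfunction survives the damping.
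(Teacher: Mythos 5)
Your overall strategy is the one the paper uses — LaSalle's invariance principle plus compactness of the resolvent reduces strong stability to showing $\mc A$ has no purely imaginary eigenvalues, and that is to be proved by a multiplier identity in the spirit of Lemma \ref{xyz} — but the proposal has genuine gaps at exactly the two steps that constitute the proof. The first gap is in the boundary data of the overdetermined eigenvalue problem. From the vanishing of the dissipation you extract only $(P_\xi (z_1)_x)(L)=0$, whereas the paper's proof extracts \emph{both} $(P_\xi w_x)(L)=0$ and $w_x(L)=0$ (see the discussion after (\ref{eig1})); only the combination gives $(Jw_x)(L)=0$, since $J=\xi P_\xi - I$, and then $w_{xxx}(L)=0$ from the natural boundary condition. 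Under your single condition one has $J(z_1)_x(L)=\xi (P_\xi(z_1)_x)(L)-(z_1)_x(L)=-(z_1)_x(L)$, so your claim that $\phi:=J(z_1)_x=P_\xi(z_1)_{xxx}$ satisfies $\phi(L)=0$ ``by the extra condition'' is false unless $(z_1)_x(L)=0$ — a condition you never derive and in fact implicitly deny when you read $P_\xi(z_1)_x(L)=0$ as the nontrivial coupling $\tilde B(z_1)_x(L)=\phi^2(L)$. The same objection hits your asserted moment condition $(z_1)_{xx}(L)=0$: on the closed-loop domain (\ref{A-MM-newd}) it requires the full feedback output $\bigl[\varsigma h_2h_3\tilde B B_2 P_\xi+B_3 I\bigr](z_2)_x(L)$ to vanish, which, given $P_\xi(z_2)_x(L)=0$, still needs $(z_2)_x(L)=0$. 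These are not cosmetic omissions: without $w_x(L)=w_{xxx}(L)=(Jw_x)(L)=0$, the multiplier $x(\bar z_1)_{xxx}$ leaves boundary contributions such as $\frac{L}{2}|w_{xxx}(L)|^2$, $w(L)\bar w_x(L)$ and $\frac{L}{2}|w_x(L)|^2$ of uncontrolled sign, and the identity no longer forces $z_1\equiv 0$.

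The second gap is the one you flag yourself: the decisive computation is left as ``the lower-order and shear contributions should cancel,'' together with the worry that resonances as in Theorem \ref{lack} might survive. This is precisely what the paper settles by doing the computation. With the full list $w(0)=w_x(0)=w_x(L)=w_{xx}(L)=w_{xxx}(L)=(Jw_x)(L)=(P_\xi w_x)(L)=0$, it multiplies the eigenvalue equation by $x\bar w_{xxx}$, converts the nonlocal term into local ones by setting $z=P_\xi w_x$ (so that $\xi z - z_{xx}=w_x$ with $z(0)=z_x(L)=z(L)=0$), and lands on the sign-definite identity $\int_0^L\bigl(\tilde A|w_{xxx}|^2+3m|w_x|^2+\xi^2|z_x|^2+\xi|z_{xx}|^2\bigr)dx=0$, which annihilates the eigenfunction for \emph{every} frequency $\omega$; no number-theoretic resonance condition as in the fully dynamic M-M model can arise, so the concern in your last paragraph is answered only by this identity, not by any structural remark about the feedback. (Incidentally, your predicted outcome $\int_0^L\tilde A|(z_1)_{xx}|^2dx=0$ has the wrong order of derivative — the $x\bar w_{xxx}$ multiplier produces $|w_{xxx}|^2$ — another sign the computation was not carried through.) As it stands, the proposal is a correct plan whose two load-bearing steps, the derivation of the full overdetermined boundary data and the multiplier identity itself, are respectively incorrect and missing.
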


\begin{proof}

If we can show that
there are no eigenvalues on the imaginary axis, or in other words, the set
\begin{eqnarray}\label{set} \left\{z\in \mc H: {\rm Re} \left<\mc A z, z\right>_{\mc H}= -\frac{k_1\gamma}{B_4} \left( h_2 h_3 \varsigma \tilde BB_2 P_\xi  +  B_3 I\right) (z_2)_x(L) \cdot (P_\xi\bar z_2)_x(L)=0\right\}
\end{eqnarray}
has only the trivial solution, i.e. $z=0$; then by La Salle's invariance principle,
the system is strongly stable. Therefore, proving the asymptotic stability of the
(1)-(3) reduces to showing that the following eigenvalue problem $\mc A z = \lambda z:$
\begin{eqnarray}
 \label{eig1}\left\{\begin{array}{ll}
 {\tilde A} w_{xxxx}+\gamma\beta \varsigma h_2 h_3 \tilde B^2 (J w_{x})_x +\lambda^2 w = 0, &\\
 w(0)=w_x(0)= w_{x}(L)= w_{xx}(L)=0, &\\
  {\tilde A} w_{xxx}(L)+\gamma\beta \varsigma h_2 h_3 \tilde B^2 J w_{x}(L)=(P_\xi w_x)(L)=0.
 \end{array}\right.
\end{eqnarray}
has only the trivial solution.
By using the definition of (\ref{MM}), i.e. $(J w_{x}) =(\varsigma P_\xi w_x)-  w_x,$ we obtain that $(J w_{x})(L)=0$  since both terms $(P_\xi w_x)(L) $ and $w_x(L)$ are zero by (\ref{set}).

Let $\lambda=i\omega$ where $\omega\in\mathbb{R}.$ Then  (\ref{eig1}) reduces to
\begin{eqnarray}
 \left\{\begin{array}{ll}
 {\tilde A} w_{xxxx}+\gamma\beta \varsigma h_2 h_3 \tilde B^2 (J w_{x})_x -\omega^2 w = 0, &\\
w(0)=w_x(0)= w_{x}(L)= w_{xx}(L)=w_{xxx}(L)=J w_{x}(L)=(P_\xi w_x)(L)=0.
 \end{array}\right.
\end{eqnarray}
Note that the following integrals are true.
\begin{eqnarray}
\nonumber && \int_0^L x w_{xxxx} \bar w_{xxx} dx=\frac{-1}{2}\int_0^L |w_{xxx}|^2dx, \\
\nonumber && \int_0^Lx  w \bar w_{xxx} dx= \int_0^L \frac{3}{2}\int_0^L |w_x|^2dx
\end{eqnarray}
and
\begin{eqnarray}
\nonumber && \int_0^L x(J w_x)_x\bar w_{xxx} dx= \int_0^L x((\xi P_\xi -I)w_x)_x  \bar w_{xxx}dx\\
\nonumber && \int_0^L \xi (P_\xi w_x)_x x\bar w_{xxx}dx +\frac{1}{2} \int_0^L |w_{xx}|^2dx
\end{eqnarray}
Let $z=P_\xi w_x.$ Then $\xi z -z_{xx}=w_x,$ and therefore
\begin{eqnarray}
\nonumber && \int_0^L \xi (P_\xi w_x)_x x\bar w_{xxx} dx =\int_0^L \xi z_x x (\xi \bar z_{xx}-\bar z_{xxxx})=\frac{-1}{2}\int_0^L \left(\xi^2|z_x|^2 +\xi |z_{xx}|^2\right) dx
\end{eqnarray}
Multiplying  the equation by $x\bar w_{xxx}$ and integrate by parts using the boundary conditions to obtain
\begin{eqnarray}
\nonumber \int_0^L \left(\tilde A |w_{xxx}|^2 +3 m |w_x|^2+ \xi^2|z_x|^2 +\xi |z_{xx}|^2\right) dx=0.
\end{eqnarray}
By using the overdetermined boundary conditions we obtain $w\equiv 0. \square$
\end{proof}

We state the following stability theorem and skip the proof since it goes beyond the scope of the paper. The proof uses the same type of frequency domain approach and spectral multipliers used in \cite[Theorem 4]{Ozkan6} where a stronger $B^*-$ type feedback is chosen $V(t)=- k_1\left[ \varsigma h_2 h_3  \tilde B B_2(P_\varsigma \dot w_x(L))  + B_3\dot w_x(L)\right]$ in comparison to (\ref{MM-feed}).
\begin{theorem}\label{MM-mult}
   Let the feedback (\ref{MM-feed}) be chosen and $g(t)\equiv 0$ in (\ref{abstractMM}). Then the solutions $\varphi$  for $t\in \mathbb{R}^+$ of the closed-loop system (\ref{Semigroupp-mmm})
     is exponentially stable in $\mc H.$
\end{theorem}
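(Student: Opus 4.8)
The plan is to invoke the Gearhart--Pr\"uss--Huang frequency-domain criterion for exponential stability of a $C_0$-semigroup of contractions. Since the dissipativity theorem stated just above already shows that $\mc A$ generates such a semigroup and has compact resolvent, exponential stability in $\mc H$ is equivalent to the two conditions
\begin{eqnarray}
\nonumber i\mathbb{R}\subset\rho(\mc A),\qquad \limsup_{|\omega|\to\infty}\left\|(i\omega I-\mc A)^{-1}\right\|_{\mathcal{L}(\mc H)}<\infty.
\end{eqnarray}
The first is immediate: compactness of $\mc A^{-1}$ forces $\sigma(\mc A)$ to consist of isolated eigenvalues, and the preceding strong-stability theorem shows that none of them lies on the imaginary axis, so $i\mathbb{R}\cap\sigma(\mc A)=\emptyset$. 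All the work is therefore in the uniform resolvent bound, which I would establish by contradiction along the frequency-domain multiplier method of \cite{Ozkan6,B-Rao}.

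Suppose the bound fails. Since the resolvent is continuous on compact sets, there exist $\omega_n\in\mathbb{R}$ with $|\omega_n|\to\infty$ and $\varphi_n=(w_n,\dot w_n)^{\rm T}\in{\rm Dom}(\mc A)$ with $\|\varphi_n\|_{\mc H}=1$ and $(i\omega_n I-\mc A)\varphi_n=:f_n\to 0$ in $\mc H$. Taking the real part of the inner product of the resolvent equation with $\varphi_n$ and using the dissipation identity computed in the dissipativity proof,
\begin{eqnarray}
\nonumber \Re\left<f_n,\varphi_n\right>_{\mc H}=-\Re\left<\mc A\varphi_n,\varphi_n\right>_{\mc H}=\frac{k_1\gamma}{B_4}\left(h_2h_3\varsigma\tilde B B_2 P_\xi+B_3 I\right)(\dot w_n)_x(L)\cdot P_\xi(\dot{\bar w}_n)_x(L),
\end{eqnarray}
forces the boundary observation $(P_\xi(\dot w_n)_x)(L)\to 0$, and hence, via the first component $\dot w_n=i\omega_n w_n-f_{1,n}$, also $\omega_n(P_\xi(w_n)_x)(L)\to 0$. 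After eliminating $\dot w_n$, the second component of the resolvent identity reads $\tilde A w_{n,xxxx}+\gamma\beta\varsigma h_2 h_3\tilde B^2(Jw_{n,x})_x-m\omega_n^2 w_n=g_n$, whose right-hand side is $m(f_{2,n}+i\omega_n f_{1,n})$; the $\omega_n$-weighted forcing is absorbed using $\|f_n\|_{\mc H}\to0$ together with the a priori bound $\|\varphi_n\|_{\mc H}=1$.

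The core step is to propagate the single boundary quantity into the interior by applying the spectral multiplier $x\bar w_{n,xxx}$ to this equation, exactly as in the strong-stability computation, but with $g_n$ on the right. The nonlocal operator is handled by setting $z_n=P_\xi w_{n,x}$, so that $\xi z_n-z_{n,xx}=w_{n,x}$ and the three integral identities used in the strong-stability proof apply verbatim; this turns $\int_0^L x(Jw_{n,x})_x\bar w_{n,xxx}\,dx$ into the coercive quantity $\tfrac12\int_0^L(\xi^2|z_{n,x}|^2+\xi|z_{n,xx}|^2)\,dx$ plus controllable curvature terms. The resulting identity gives
\begin{eqnarray}
\nonumber \int_0^L\left(\tilde A|w_{n,xxx}|^2+3m\,\omega_n^2|w_{n,x}|^2+\xi^2|z_{n,x}|^2+\xi|z_{n,xx}|^2\right)dx\le \mathcal{B}_n+o(1),
\end{eqnarray}
where $\mathcal{B}_n$ collects boundary traces at $x=L$ that are all controlled by $(P_\xi(\dot w_n)_x)(L)$ and by the overdetermined conditions recorded in (\ref{set}). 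A second multiplication by $\bar w_{n,xx}$, combined with Gagliardo--Nirenberg interpolation between $\|w_{n,xxx}\|$ and $\|w_n\|$, then recovers $\|w_{n,xx}\|\to0$ and, through $\dot w_n\approx i\omega_n w_n$, $\|\dot w_n\|\to0$, so that $\|\varphi_n\|_{\mc H}\to0$, contradicting $\|\varphi_n\|_{\mc H}=1$.

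The main obstacle I anticipate is the bookkeeping of the boundary terms $\mathcal{B}_n$ produced by $x\bar w_{n,xxx}$: because only the single feedback $(P_\xi\dot w_x)(L)$ is available, one must verify that this multiplier is tuned so that every boundary contribution at $x=L$ collapses onto quantities proportional to $(P_\xi(\dot w_n)_x)(L)$, $w_{n,x}(L)$, or $w_{n,xx}(L)$ — the last two vanishing by the domain conditions — leaving no uncontrolled trace. The nonlocality of $J=P_\xi D_x^2$ compounds this, since the boundary traces of $z_n=P_\xi w_{n,x}$ must be reconciled with those of $w_n$; establishing that they match, exactly as the identity $(Jw_x)(L)=0$ was exploited in the strong-stability proof, is the most delicate point and is precisely where the choice of the $P_\xi$-weighted feedback (\ref{MM-feed}) becomes essential.
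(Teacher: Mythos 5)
You should know at the outset that the paper offers no proof of this theorem: it states the result and explicitly skips the proof as ``beyond the scope of the paper,'' deferring to the frequency-domain/spectral-multiplier argument of \cite[Theorem 4]{Ozkan6} — which, importantly, treats the \emph{stronger} feedback $V(t)=-k_1\left[\varsigma h_2h_3\tilde B B_2(P_\xi\dot w_x)(L)+B_3\dot w_x(L)\right]$ rather than (\ref{MM-feed}). So your Gearhart--Pr\"uss plus multiplier strategy is the intended one in spirit, and your first step ($i\mathbb{R}\subset\rho(\mc A)$ from compact resolvent plus the strong-stability theorem) is fine. The problem is that your sketch breaks down precisely where the weak feedback (\ref{MM-feed}) differs from the strong one, and the breakdown is concrete, not a matter of bookkeeping.

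First, you claim the boundary terms generated by the multiplier $x\bar w_{n,xxx}$ collapse onto $(P_\xi(\dot w_n)_x)(L)$, $w_{n,x}(L)$, and $w_{n,xx}(L)$, ``the last two vanishing by the domain conditions.'' They do not. The conditions $w_x(L)=w_{xx}(L)=w_{xxx}(L)=(Jw_x)(L)=(P_\xi w_x)(L)=0$ that killed the boundary terms in the strong-stability proof are the \emph{overdetermined} conditions of the LaSalle invariant set (\ref{set}); they characterize imaginary-axis eigenfunctions, not elements of ${\rm Dom}(\mc A)$. For the closed-loop generator with domain (\ref{A-MM-newd}), the only conditions at $x=L$ are the feedback condition on $\tilde A w_{xx}(L)$ and the third-order free-end condition; along your contradiction sequence $w_{n,x}(L)$ is entirely uncontrolled, $w_{n,xx}(L)$ is only as small as the feedback observation, and each trace must be estimated with explicit rates against the powers of $\omega_n$ that the multiplier produces. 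Second, the dissipation produced by (\ref{MM-feed}) is not a perfect square: ${\rm Re}\left<\mc A\varphi_n,\varphi_n\right>$ equals $-\frac{k_1\gamma}{B_4}$ times the sum of the nonnegative term $\varsigma h_2h_3\tilde B B_2\,|(P_\xi(\dot w_n)_x)(L)|^2$ and the cross term $B_3\,{\rm Re}\left\{(\dot w_n)_x(L)\,\overline{(P_\xi(\dot w_n)_x)(L)}\right\}$, and the vanishing of this sum does not force $(P_\xi(\dot w_n)_x)(L)\to 0$ — much less $(\dot w_n)_x(L)\to 0$, which is what you actually need in order to conclude $w_{n,x}(L)=O(|\omega_n|^{-1})$ via $\dot w_n=i\omega_n w_n-f_{1,n}$ — unless one first proves a pointwise coercivity bound for the boundary form $u\mapsto{\rm Re}\left\{\left[c_1P_\xi u+c_2u\right](L)\,\overline{(P_\xi u)(L)}\right\}$. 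With the stronger feedback of \cite{Ozkan6} the dissipation \emph{is} a perfect square and this extraction is immediate; supplying it for (\ref{MM-feed}) is the real content of the theorem, and it is absent from your sketch. You flagged this point as ``delicate,'' but without it the contradiction argument does not close.
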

Note that our result not only confirms  the results in \cite{Baz} but also  improves them since only the asymptotic stability is mentioned  in \cite{Baz} without a proof.


   \section{Conclusion and Final Remarks}
 In this paper,  electrostatic voltage-controlled piezoelectric smart composite beam models are shown to be exponentially stable with the choice of the $B^*-$type state feedback, which are all mechanical. This is similar to the charge-actuation case but not the current actuation case where only the asymptotic stability can be achieved \cite{Ozer18b}. For the fully dynamic R-N model, asymptotic stability can be achieved for ``inertial sliding solutions" yet this is not the case for the  fully dynamic M-M model. There may still be eigenvalues on the imaginary axis. This implies that one electric controller for the piezoelectric layer may not be  enough in general to asymptotically (or exponentially) stabilize larger classes of solutions involving bending motions. This lines up with the results for the  charge or current-actuated models \cite{Ozer18b}. The stabilization  results are summarized in Table \ref{results}.

 Finally, we can conclude that even though the magnetic effects are minor in comparison to the mechanical and electrical effects for a piezoelectric layer, they have dramatic effects in controlling these composites. Note that
the stabilizability of fully dynamic R-N and M-M models for energy-space solutions is still an open problem. On the other hand, consideration of a remedial damping injection (by a mechanical feedback controller) to the piezoelectric layer of  the fully dynamic models is under consideration. Numerical results confirm that mechanical feedback controllers have a stronger effect to suppress vibrations \cite{Ozer18c}. Together with the effect of shear damping, the investigation of the optimal decay  rates to tune up the damping parameters and the feedback gains is the topic of future research.

The modeling of the three layer composition can be formed into a bimorph energy harvester \cite{Erturk,Shu} or a shear mode energy harvester  \cite{Sodano} to convert the stabilization problem to an energy harvesting problem.  The mathematical analysis provided in this paper will be a perfect foundation for future research on these models.

\begin{table}[htp]
\centering
\caption{Stability results for the closed-loop system with the $B^*-$feedback controller corresponding to the control $V(t)$ of the piezoelectric layer.}
\label{table}
\setlength{\tabcolsep}{3pt}
 \begin{tabular}{|p{60pt}|p{60pt}|p{155pt}|p{40pt}|}
    \hline
     Assumption  & Model& $B^*-$measurement for $V(t)$ at $x=L$& Stability  \\ \hline\hline
    E-static &  \multirow{ 2}{*}{Rao-Nakra} & Stretching \& compressing velocity & E.S.   \\
       F. Dynamic & & Induced current & A.S.   \\
    \hline
    \hline
    E-static & \multirow{ 2}{*}{Mead-Marcus} & Angular velocity (bending) + shear velocity (middle layer)& E. S.  \\
         F. Dynamic && Induced current &Not A.S.   \\
    \hline
          \multicolumn{4}{p{340pt}}{\footnotesize Different electro-magnetic assumptions for cantilevered R-N and M-M models. Here E.S.=Exponentially Stability for all modes, A.S.= Asymptotically Stability for inertial sliding solutions, Not A.S.=Not Asymptotically Stability for inertial sliding solutions.}
    \end{tabular}
\label{results}
\end{table}


\medskip
Received xxxx 20xx; revised xxxx 20xx.
\medskip

\end{document}